\tikzstyle{graphpoint}=[circle,draw=black,fill=black,inner sep=0pt,minimum size=5pt]
\tikzstyle{graphpart}=[circle,draw,minimum size=6mm]
\newcommand*{\eg}{e.\,g.\@\xspace}
\newcommand*{\ie}{i.\,e.\@\xspace}
\newcommand{\NTH}{^{\text{th}}}
\newcommand{\mc}{\mathfrak{c}}
\newcommand{\mP}{\mathbb{P}}
\newcommand{\mQ}{\mathbb{Q}}
\newcommand{\mZ}{\mathbb{Z}}
\newcommand{\mN}{\mathbb{N}}
\newcommand{\mG}{\mathbb{G}}
\newcommand{\mO}{\mathcal{O}}
\newcommand{\et}{\text{\'et}}
\newcommand{\red}{\textit{red}}
\DeclareMathOperator{\Spec}{\textit{Spec}}
\DeclareMathOperator{\coker}{\text{coker}}
\DeclareMathOperator{\Ind}{\text{Ind}}
\DeclareMathOperator{\Hom}{\text{Hom}}
\DeclareMathOperator{\Pic}{\text{Pic}}
\DeclareMathOperator{\Div}{\text{div}}
\newtheorem{definition}{Definition}[section]
\newtheorem{theorem}[definition]{Theorem}
\newtheorem{lemma}[definition]{Lemma}
\newtheorem{proposition}[definition]{Proposition}
\newtheorem{corollary}[definition]{Corollary}
\title{Aspherical neighborhoods on arithmetic surfaces: the local case}
\author{Katharina H\"ubner}
\begin{document}
\maketitle
\begin{abstract}
On arithmetic surfaces over henselian discrete valuation rings we examine
whether a geometric point has a basis of \'etale neighborhoods
 whose $\mc$-completed \'etale homotopy types are of type $K(\pi,1)$ with respect to a full class~$\mc$ of finite groups.
\end{abstract}

\section{Introduction}

In~\cite{SGA4} Artin proves the comparison theorem of classical with \'etale cohomology for a variety over the complex numbers.
A crucial point in the proof is the construction of a special type of neighborhoods now called good Artin neighborhoods.
They are open subschemes which admit a successive fibration into affine curves~$X \to B$ with smooth compactification~$\bar{X} \to B$ such that the complement~$\bar{X}-X$ is \'etale over~$B$.
This type of fibration is called elementary fibration.
The construction of a good Artin neighborhood uses Bertini's theorem in order to find a suitable linear subspace of the ambient projective space such that projection along this subspace locally yields an elementary fibration.
These neighborhoods are so useful because topologically they are particularly simple.
They are examples of $K(\pi,1)$-spaces, \ie of spaces whose only nontrivial homotopy group is the fundamental group.
This property can be drawn from the long exact homotopy sequence associated with an elementary fibration.

The scenario where~$X$ is a smooth variety over an algebraically closed field of positive characteristic was treated by Friedlander in~\cite{MR0313254}.
He examines whether an \'etale neighborhood is $K(\pi,1)$ with respect to a prime number~$\ell$ different from the characteristic of~$X$,
 \ie if it is $K(\pi,1)$ after $\ell$-completion of the \'etale homotopy type.
He also uses elementary fibrations and the homotopy sequence associated with these fibrations.
The major problem he has to deal with is non-exactness of~$\mc$-completions for a full class of finite groups~$\mc$.
If~$\mc$ is the class of finite~$\ell$-groups, he can prove that under certain conditions~$\ell$-completion is indeed exact by using special features of~$\ell$-groups.

In the arithmetic setting, \ie considering schemes flat and of finite type over~$\mZ$ or~$\mZ_p$, the above approach is not promising.
In fact, \'etale bases of neighborhoods which admit an elementary fibration never exist (see \cite{Dissertation}, Chapter~3).
In case of arithmetic surfaces~$\pi: X \to B$ this is quite obvious because for~$U \to X$ \'etale the restriction of~$\pi$ to~$U$ is the only possible fibration into curves
(unless the generic fiber is rational but in this case~$X$ may be replaced by an \'etale neighborhood).
Even if~$X$ is smooth and projective over~$B$, there are open subschemes with no \'etale neighborhood admitting an elementary fibration.
One just has to take the complement of a non-smooth divisor in~$X$.

As a consequence we cannot expect to work with smooth fibrations of arithmetic schemes.
This makes it hard to use the machinery of long exact sequences of homotopy groups associated with a fibration.
The problem is the lack of a simple relation between the homotopy theoretic fiber and the geometric fibers.
Instead, we follow a more explicit approach working directly with the Leray spectral sequence associated with a fibration.
Furthermore, we restrict our attention to arithmetic surfaces, the one-dimensional case having been dealt with in~\cite{MR2629694}.

The present work examines arithmetic surfaces which are of finite type over some local ring of integers of residue characteristic~$p$.
In contrast to Friedlander we do not need to restrict our attention to $\ell$-extensions for a single prime $\ell \neq p$ but can consider more general classes of finite groups.
We say that a noetherian scheme~$X$ is $K(\pi,1)$ with respect to a full class of finite groups~$\mc$ if the pro-$\mc$-completion of the \'etale homotopy type of~$X$ is $K(\pi,1)$.
Writing $\mN(\mc)$ for the submonoid of $\mN$ consisting of all cardinalities of groups in~$\mc$ the main result reads as follows.

\begin{theorem} \label{theoremintro}
 Let~$B$ be the spectrum  of the ring of integers of the completion~$K$ of an algebraic extension of~$\mQ_p$ with finite ramification index.
 Let~$\mc$ be a full class of finite groups such that the residue characteristic of~$B$ is not contained in~$\mN(\mc)$
  and for all but finitely many primes~$\ell \in \mN(\mc)$ the extension~$K(\mu_\ell)|K$ is a~$\mc$-extension.
 Let~$Y/B$ be an arithmetic surface and~$\bar{y} \to Y$ a geometric point.
 Then~$\bar{y}$ has a basis of \'etale neighborhoods which are~$K(\pi,1)$ with respect to~$\mc$.
\end{theorem}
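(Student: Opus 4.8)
The plan is to fiber $Y$ over $B$ and reduce the question to the one-dimensional case by means of the Leray spectral sequence of $f\colon Y\to B$, controlling the higher direct images by hand since no elementary fibration is available.

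First I would reduce according to the location of $\bar y$. If $\bar y$ lies over the generic point $\eta\in B$, then since $\Spec K\hookrightarrow B$ is an open immersion, for any étale neighborhood $V\to Y$ of $\bar y$ the open $V_\eta=V\times_B\Spec K$ is again an étale neighborhood of $\bar y$ lying in the generic fiber $Y_\eta$; hence such neighborhoods are cofinal, and this case reduces to the analogous statement for curves over the local field $K$, which I would treat first (using that $\Spec K$ and smooth affine curves over $K$ are $K(\pi,1)$ with respect to $\mc$, and that a unibranch curve germ is a universal homeomorphism onto its normalization, so singular points create no extra difficulty). So assume $\bar y$ lies over the closed point $s\in B$. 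Shrinking a given étale neighborhood, and using $\dim Y=2$, $\dim B=1$, I may assume $Y$ affine and regular, $f\colon Y\to B$ flat with geometrically connected fibers of dimension $\le 1$ (Stein factorization, after replacing $B$ by a finite — possibly ramified — cover, still the spectrum of a complete discrete valuation ring of residue characteristic $p$ for which the cyclotomic hypothesis persists), and $f$ extended to a projective regular model $\bar f\colon\bar Y\to B$ with boundary $D=\bar Y\setminus Y$ a divisor finite over $B$ — but, crucially, $D\to B$ need not be étale. Since $p\notin\mN(\mc)$, every $\mc$-sheaf is prime-to-$p$ torsion; in particular Artin's finiteness theorem applies, and for $q\ge 2$ the sheaf $R^qf_*\mathcal F$ is supported over $s$ (its geometric generic stalk $H^q(Y_{\bar\eta},\mathcal F)$ vanishes, $Y_{\bar\eta}$ being an affine curve over an algebraically closed field). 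I would use the criterion that $Y$ is $K(\pi,1)$ with respect to $\mc$ iff $\varinjlim_{Y'}H^i_{\et}(Y',\mathcal F)=0$ for all $i\ge 1$ and all $\mc$-sheaves $\mathcal F$, the colimit over connected Galois pro-$\mc$ coverings $Y'\to Y$.

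For such $\mathcal F$ I would run the Leray spectral sequence $E_2^{p,q}=H^p(B,R^qf_*\mathcal F)\Rightarrow H^{p+q}(Y,\mathcal F)$. A connected pro-$\mc$ covering $B'\to B$ pulls back to $Y'=Y\times_B B'$ with $R^qf'_*(\mathcal F|_{Y'})=(R^qf_*\mathcal F)|_{B'}$. By the one-dimensional case — this is where the hypothesis that $K(\mu_\ell)|K$ is a $\mc$-extension for all but finitely many $\ell\in\mN(\mc)$ enters — $B$ is $K(\pi,1)$ with respect to $\mc$, and so is every component of $D$ (the spectrum of a finite extension of $k$, or an arithmetic curve over which the cyclotomic hypothesis again holds). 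The terms with $q\ge 2$ are the cohomology of a skyscraper over $s$, hence feed only cohomology of $\Spec k$; and after further shrinking, so that $f_*\mathcal F$ splits into locally constant $\mc$-sheaves on $B$ and skyscrapers over $s$, the row $q=0$ is of the same shape. Passing to the colimit over the $B'$ then annihilates every $E_2^{p,q}$ with $p\ge 1$, and of the skyscraper contributions only their $H^0$ survives; so in positive total degree only $E_2^{0,1}=H^0(B,R^1f_*\mathcal F)$ and these residual invariants remain.

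The crux is $R^1f_*\mathcal F$ — precisely the sheaf an elementary fibration would make locally constant, and which here is only constructible on the trait $B$, locally constant on $\eta$, with vanishing-cycle jumps over $s$. Splitting it along $j\colon\eta\hookrightarrow B$ and $i\colon s\hookrightarrow B$: the skyscraper summand again feeds only cohomology of $\Spec k$, while the locally constant part $j^*R^1f_*\mathcal F=H^1(Y_{\bar\eta},\mathcal F)$ — and likewise $H^1(Y_{\bar s},\mathcal F)$ — is controlled by the affine curves $Y_{\bar\eta}$, $Y_{\bar s}$ over $\bar K$, $\bar k$, which are $K(\pi,1)$ with respect to $\mc$ (their geometric $H^1$ already dies under Kummer-type coverings), together with $\Spec K$, $\Spec k$ being $K(\pi,1)$ with respect to $\mc$. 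The decisive point is that the colimit over \emph{all} connected pro-$\mc$ coverings of $Y$ is strictly richer than the one over coverings pulled back from $B$: it contains coverings that ramify along the components of the special fiber $Y_s$, and coverings inducing arbitrarily large coverings on the fibers of $f$; feeding these in annihilates $R^1f'_*(\mathcal F|_{Y'})$ and the residual skyscraper invariants in the colimit, hence kills $H^0(B,R^1f_*\mathcal F)$. Together with the preceding step this gives $\varinjlim_{Y'}H^i(Y',\mathcal F)=0$ for all $i\ge 1$, \ie $Y$ is $K(\pi,1)$ with respect to $\mc$; carrying out the construction inside any prescribed étale neighborhood yields the basis. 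I expect the main obstacle to be this last point, forced by the absence of an elementary fibration: one cannot invoke ``$B$ is $K(\pi,1)$'' directly, but must (i) identify the vanishing-cycle contributions to $R^qf_*\mathcal F$ and recognize them as cohomology of fields, hence harmless after covering; (ii) shrink $Y$ far enough that the remaining part of $f_*\mathcal F$ is a genuine $\mc$-sheaf and the jump locus of $R^1f_*\mathcal F$ is controlled by $D$; and (iii) verify that the colimit over coverings of $Y$ — which, unlike those of $B$, may ramify along the special fiber — really annihilates $H^0(B,R^1f_*\mathcal F)$ and the residual invariants; it is in (iii) that the prime-to-$p$ hypothesis (tameness, finiteness, purity on the regular model $\bar Y$) and the cyclotomic hypothesis on $K(\mu_\ell)|K$ are indispensable.
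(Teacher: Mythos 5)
Your reduction (cohomological criterion, Leray spectral sequence for a fibration over the trait, killing everything fed by $H^{\ge 1}(B,-)$ via coverings of $B$ and goodness of the residue field) matches the paper's Proposition~\ref{firstreductions} in outline, but the proposal has a genuine gap exactly at what you call point~(iii), and it also misplaces the crux. In the limit over $\mc$-coverings, $H^1$ and all terms $H^p(B,-)$ with $p\ge 1$ die for formal reasons; the real obstruction is $E_2^{0,2}=H^0(B,R^2\pi_*\Lambda)$. Working directly with an affine neighborhood $Y\to B$ does not make this term accessible: without properness there is no base change, so the stalk of $R^2f_*\Lambda$ at the closed point is $H^2(Y\times_B B^{\mathit{sh}},\Lambda)$ --- essentially the original problem again, so the argument becomes circular. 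The paper instead compactifies, keeps the proper vertical curves inside $X=\bar X-\bar D_h$ so that Proposition~\ref{basechange} identifies $R^2\pi_*\Lambda$ with $H^2$ of the geometric special fibre, and then the decisive fact is that this group is \emph{not} killed by any $\mc$-covering (it is spanned by fundamental classes of proper vertical curves, which survive pullback). What is killed is only its quotient by the image of $H^2_D(X,\Lambda)$: via purity and Lemma~\ref{intersectionmatrix} that map is the intersection matrix of the boundary's vertical components, which is negative definite because the horizontal boundary is arranged to meet every vertical prime divisor; hence the cokernel is $\mc$-torsion (Lemma~\ref{cokernelvanishes}), and it is annihilated by Kummer coverings making the pulled-back boundary divisors divisible (Lemma~\ref{imagedivisiblebym}). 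Your assertion that ``coverings ramified along the special fiber and large on the fibers'' annihilate the residual invariants supplies no such mechanism and, taken at face value, is false for the fundamental-class part.

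The second missing ingredient is everything needed to produce and propagate those ramified coverings: the property of having \emph{enough tame coverings} (functions supported on the boundary with prescribed divisorial behaviour), its construction on a suitable neighborhood via generation of $\Pic$ by horizontal divisors avoiding given points --- this uses N\'eron's Mordell--Weil theorem over finitely generated fields, which is precisely why Theorem~\ref{theoremintro} assumes $K$ is the completion of an algebraic extension of $\mQ_p$ with finite ramification index and why the proof descends to a base with finitely generated fraction field by formal smoothness (Proposition~\ref{opensubscheme}, Lemma~\ref{picardgenerators}) --- together with its stability under desingularized tame coverings, which requires the Hirzebruch--Jung analysis of exceptional fibres and the cofilteredness of the category of desingularized $\mc$-coverings (Sections~\ref{exceptionalfibres}--\ref{arithmeticsurfaceswithenoughtamecoverings}, Proposition~\ref{enoughtamecoveringsstable}). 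Your proposal never addresses why enough suitably ramified $\mc$-coverings exist on the chosen neighborhood, nor how to keep the boundary in the simple (tidy) form needed for purity and excision after passing to a covering, and the cohomology with support in degree $\ge 3$ (handled in Proposition~\ref{etalecovering}, including the dual-graph/rational-tree argument for exceptional fibres) is not treated at all. So the overall skeleton is right, but the heart of the proof --- the intersection-matrix argument and the enough-tame-coverings machinery --- is missing.
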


In particular, there exist $K(\pi,1)$-neighborhoods with respect to any class of finite groups of the form~$\mc(\ell_1,\ldots,\ell_n)$
 for prime numbers~$\ell_i$ prime to the residue characteristic of~$B$.
Here, $\mc(\ell_1,\ldots,\ell_n)$ denotes the class of finite groups whose order is divisible at most by the primes $\ell_1,\ldots,\ell_n$.
If the residue field of~$B$ is separably closed, we can take any full class of finite groups with $p \notin \mN(\mc)$.
In particular, we may take the class of all finite groups whose order is prime to $p$.

Let us explain more closely what a $K(\pi,1)$-scheme is.
Consider a connected, locally noetherian scheme~$X$ with geometric point~$\bar{x}$.
Following~\cite{AM} we associate with $(X,\bar{x})$ the \'etale homotopy type~$X_{et}$, which is a pro-object of the homotopy category of pointed, connected CW-complexes.
We obtain homotopy pro-groups~$\pi_n(X_{\et})$ and for an abelian group~$A$ with a~$\pi_1(X_{\et})$-action cohomology groups~$H^n(X_{\et},A)$.
The first homotopy pro-group of~$X_{\et}$,~$\pi_1(X_{\et})$, coincides with the "pro-groupe fondamentale enlargi" defined in~\cite{SGA3}, Exp.~X, §6 (see~\cite{AM}, Corollary~10.7).
If~$X$ is geometrically unibranch~(\eg normal),~$\pi_1(X_{\et})$ is profinite and coincides with the usual fundamental group defined in~\cite{SGA1}, Exp.~V.
Moreover, for an abelian group~$A$ with a~$\pi_1(X_{\et})$-action the cohomology groups~$H^n(X_{\et},A)$ coincide with the \'etale cohomology groups~$H^n(X,A)$.

Let~$n$ be a positive integer and~$G$ a pro-group, which is assumed abelian if~$n > 1$.
There exists a pointed, connected pro-CW-complex whose~$n\NTH$ homotopy pro-group is isomorphic to~$G$ and whose remaining homotopy pro-groups vanish.
It is unique up to~$\sharp$-isomorphism (i.e. up to morphisms inducing isomorphisms on homotopy pro-groups) and called Eilenberg MacLane space of type~$K(G,n)$.
We say that a connected, locally noetherian scheme~$X$ with geometric point~$\bar{x}$ is $K(\pi,1)$ if the canonical morphism
$$
X_{\et} \to K(\pi_1(X,\bar{x}),1).
$$
is a $\sharp$-isomorphism.

We are interested in a slightly refined version of the $K(\pi,1)$ property:
For a full class of finite groups~$\mc$ and a pro-CW-complex~$Z$ we denote by~$Z(\mc)$ the pro-$\mc$-completion of~$Z$ (which exists by~\cite{AM} Theorem 3.4).
We say that~$X$ is $K(\pi,1)$ with respect to~$\mc$ if $X_{\et}(\mc)$ is $K(\pi,1)$.
Note that in general, being $K(\pi,1)$ neither implies nor is implied by being $K(\pi,1)$ with respect to~$\mc$.
The reason is the following:
For any pro-CW-complex~$Z$ there is a natural isomorphism
$$
\pi_1(Z)(\mc) \overset{\sim}{\to} \pi_1(Z(\mc))
$$
but the higher homotopy pro-groups of~$Z(\mc)$ are not necessarily isomorphic to the~$\mc$-completion of the respective homotopy pro-groups of~$Z$.

There is a criterion for a scheme to be $K(\pi,1)$ with respect to~$\mc$ which involves only \'etale cohomology.
In order to explain it let us fix some terminology:
A Galois~$\mc$-covering of~$X$ is a Galois covering with Galois group in~$\mc$.
A~$\mc$-covering is a covering which is dominated by a Galois~$\mc$-covering.
The \'etale coverings of~$X$ constitute a Galois category by~\cite{SGA1}, Exp.~V, §7.
From this it is easy to deduce that the same holds for the~$\mc$-coverings of~$X$.
We have the following characterization of schemes of type $K(\pi,1)$ (see~\cite{Schmidt07}, Proposition~2.1):

\begin{proposition} \label{kpi1}
 Let~$\mc$ be a full class of finite groups and~$X$ a locally noetherian scheme.
 The following assertions are equivalent:
 \begin{enumerate}[label=(\roman*)]
  \item $X$ is~$K(\pi,1)$ with respect to~$\mc$.
  \item Let~$i \geq 1$ and~$\Lambda = \mZ/\ell\mZ$ with~$\ell \in \mN(\mc)$.
	Then, for every~$\mc$-covering~$X' \to X$ and every class~$\phi \in H^i(X',\Lambda)$ there is a~$\mc$-covering~$X'' \to X'$ such that~$\phi$ maps to zero under
	$$
	H^i(X',\Lambda) \to H^i(X'',\Lambda).
	$$
 \end{enumerate}
\end{proposition}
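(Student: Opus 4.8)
The plan is to go through \'etale cohomology (this is essentially the argument of \cite{Schmidt07}): first translate~(i) into $\mc$-acyclicity of the pro-$\mc$-universal cover, then compute the cohomology of that cover as a filtered colimit of \'etale cohomology groups over the Galois $\mc$-coverings of~$X$, and finally reorganise this colimit into the iterated-covering statement~(ii). We may assume $X$ connected (otherwise argue on each connected component). Write $\pi$ for the fundamental group of the Galois category of $\mc$-coverings of~$X$; it is profinite, equals $\pi_1(X_{\et})(\mc)$, and its finite quotients are precisely the Galois groups of Galois $\mc$-coverings of~$X$. By definition, $X$ is $K(\pi,1)$ with respect to~$\mc$ exactly when the pro-space $Z := X_{\et}(\mc)$, which has $\pi_1(Z)\cong\pi$, is $K(\pi,1)$.

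First I would pass to the universal cover. Let $\widetilde Z\to Z$ be the universal covering of~$Z$; it is a simply connected, pro-$\mc$-complete pro-space. A connected covering does not change the higher homotopy pro-groups, so $Z$ is $K(\pi,1)$ iff $\pi_n(\widetilde Z)=0$ for all $n\geq 2$, i.e. $\widetilde Z$ is $\sharp$-contractible. By the Hurewicz and Whitehead theorems in the pro-$\mc$-complete setting (\cite{AM}), together with the elementary observation that a pro-$\mc$ abelian group that is divisible by every $\ell\in\mN(\mc)$ must vanish, this holds iff $\widetilde Z$ is $\mc$-acyclic, i.e. $H^i(\widetilde Z,\Lambda)=0$ for all $i\geq 1$ and all $\Lambda=\mZ/\ell\mZ$ with $\ell\in\mN(\mc)$. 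Now the cohomology of the universal cover is the filtered colimit of the cohomology of the finite connected coverings of~$Z$; such a covering is $(X_\alpha)_{\et}(\mc)$ for a Galois $\mc$-covering $X_\alpha\to X$, and since $\Lambda$ has order in $\mN(\mc)$ the $\mc$-completion map $(X_\alpha)_{\et}\to(X_\alpha)_{\et}(\mc)$ is an isomorphism on $\Lambda$-cohomology, which combined with the comparison isomorphism of \cite{AM} gives $H^i\big((X_\alpha)_{\et}(\mc),\Lambda\big)=H^i_{\et}(X_\alpha,\Lambda)$. Hence
$$
H^i(\widetilde Z,\Lambda)\;=\;\varinjlim_{\alpha} H^i_{\et}(X_\alpha,\Lambda),
$$
the colimit being over the filtered system of Galois $\mc$-coverings $X_\alpha\to X$; so~(i) is equivalent to the vanishing of all these colimits.

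It remains to identify this vanishing with~(ii); this is the one genuinely combinatorial step, and it is exactly here that one uses that $\mc$, being a full class, is closed under subgroups, quotients and extensions. If the colimits vanish, let $X'\to X$ be a $\mc$-covering and $\phi\in H^i_{\et}(X',\Lambda)$; choose a Galois $\mc$-covering $X_\alpha\to X$ dominating $X'$. By hypothesis there is a Galois $\mc$-covering $X_\beta\to X$ refining $X_\alpha$ with $\phi|_{X_\beta}=0$, and $X_\beta\to X'$ is again a $\mc$-covering because the Galois group of its Galois closure over $X'$ is a quotient of $\Gal(X_\beta/X)\in\mc$; this is the covering required in~(ii). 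Conversely, assume~(ii); given $\phi\in H^i_{\et}(X_\alpha,\Lambda)$ for a Galois $\mc$-covering $X_\alpha\to X$, apply~(ii) with $X'=X_\alpha$ to obtain a $\mc$-covering killing~$\phi$, refine it to a Galois $\mc$-covering of $X_\alpha$, and pass to its Galois closure over~$X$: using closure under extensions, this is a Galois $\mc$-covering of~$X$ refining $X_\alpha$ on which $\phi$ dies, so the relevant colimit vanishes. Finally, that it suffices to test~(ii) with $\Lambda=\mZ/\ell\mZ$ rather than with arbitrary finite coefficients of $\mN(\mc)$-torsion follows by d\'evissage along short exact sequences together with exactness of filtered colimits.

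The genuinely delicate point is the first reduction: one must invoke the pro-$\mc$ forms of the Hurewicz and Whitehead theorems correctly and, in particular, use that pro-$\mc$-completion leaves $\Lambda$-cohomology untouched for $\ell\in\mN(\mc)$. Once this homotopy-theoretic dictionary is in hand, the covering-theoretic bookkeeping in the last step is routine.
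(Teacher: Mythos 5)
The paper gives no proof of this proposition at all: it is quoted verbatim from \cite{Schmidt07}, Proposition~2.1, and the reader is even invited to take it as the definition of the $K(\pi,1)$-property. So your attempt can only be measured against the standard Artin--Mazur-style argument that lies behind that reference, and your outline is exactly that argument: reduce to contractibility of the universal cover of $Z=X_{\et}(\mc)$, convert contractibility into $\mc$-acyclicity via pro-Hurewicz/Whitehead, identify the cohomology of the cover with $\varinjlim_\alpha H^i(X_\alpha,\Lambda)$ over Galois $\mc$-coverings, and then do the covering-theoretic bookkeeping. The last step is correct and uses fullness of $\mc$ in the right places (one small slip: $X_\beta \to X'$ is Galois with group a \emph{subgroup}, not a quotient, of $\Gal(X_\beta/X)$, which is what closure under subgroups gives you).

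The step you yourself flag as delicate is also the one that is genuinely incomplete as written. You assert that the finite connected covering of $Z$ attached to an open subgroup $U\subseteq\pi$ \emph{is} $(X_\alpha)_{\et}(\mc)$. Compatibility of $\mc$-completion with passage to covering spaces is precisely the kind of statement that is not formal (it is the homotopy-theoretic face of Serre's goodness problem) and must either be pinned to a precise result in \cite{AM} or replaced. Fortunately you only need its cohomological shadow, and that can be obtained cheaply: construct the universal cover as the pro-object of the finite coverings $Z_U$, use Shapiro's lemma for finite coverings to write $H^i(Z_U,\Lambda)\cong H^i\bigl(Z,\Lambda[\pi/U]\bigr)$, note that $\Lambda[\pi/U]$ is a finite abelian group lying in $\mc$ because $\mc$ is full, so that cohomology of the $\mc$-completion with these (twisted) coefficients agrees with \'etale cohomology of $X$, and then apply Shapiro again on the scheme side ($\Lambda[\pi/U]=f_*\Lambda$ for $f:X_\alpha\to X$ finite \'etale) to land on $H^i(X_\alpha,\Lambda)$. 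Likewise the pro-$\mc$ Hurewicz/Whitehead input and the fact that the homotopy and homology pro-groups of the cover stay pro-$\mc$ should be tied to specific statements in \cite{AM}; alternatively one can avoid universal covers altogether by comparing $X_{\et}(\mc)$ with $K(\pi_1^{}(X_{\et})(\mc),1)$ on cohomology with all twisted finite $\mc$-coefficients and feeding condition (ii) into the Hochschild--Serre spectral sequence for the tower of $\mc$-coverings, which is in effect how \cite{Schmidt07} proceeds. With these points supplied your proof is correct; as it stands it is a sound outline whose central comparison is asserted rather than justified.
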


The reader not familiar with \'etale homotopy theory may safely take this criterion as a definition of the $K(\pi,1)$-property.
Throughout the rest of this article we will work exclusively with the above cohomological characterization.
Note that the condition on the first cohomology group is automatically satisfied as $H^1(X',\Lambda)$ classifies \'etale $\Lambda$-torsors of~$X'$ and these are trivialized by a $\mc$-covering.
 
Let us now consider the situation on arithmetic surfaces:
We fix a base scheme~$B$ which is the spectrum of an excellent Dedekind ring of dimension one.
In this article we are mainly interested in the case where~$B$ is a henselian discrete valuation ring
 but in view of future work on global arithmetic surfaces we formulate most results in more general terms.
It is only in Section\nobreakspace \ref {killingthecohomologyofhigherdirectimages} and in Section\nobreakspace \ref {mainresults} that we restrict our attention to arithmetic surfaces over a henselian base.
By an \emph{arithmetic surface} over $B$ we mean an irreducible, normal scheme~$U$ of dimension~$2$ which is flat and of finite type over~$B$ with geometrically connected generic fiber.
Take a full class of finite groups~$\mc$ such that all prime numbers in~$\mN(\mc)$ are invertible on~$B$.
Proposition\nobreakspace \ref {kpi1} leads us to the question whether a given cohomology class~$\phi \in H^i(U,\Lambda)$ for $i \geq 2$ can be killed by a $\mc$-covering.

Assume there is a compactification~$\bar{X}$ of~$U/B$ and an open subscheme~$X$ of~$\bar{X}$ containing~$U$
 such that $\bar{X}-X$ is the support of a nonempty regular horizontal divisor intersecting all vertical divisors transversally.
Setting $Z = X-U$ (with the reduced scheme structure) we have the following exact sequence:
\begin{equation} \label{excisionsequence}
 \ldots \to H^i_Z(X,\Lambda) \to H^i(X,\Lambda) \to H^i(U,\Lambda) \to H^{i+1}_Z(X,\Lambda) \to \ldots.
\end{equation}
This reduces the task of killing cohomology classes in $H^i(U,\Lambda)$ to killing classes in $H^{i+1}_Z(X,\Lambda)$ and $H^i(X,\Lambda)$.

Let us first have a look at $H^{i+1}_Z(X,\Lambda)$.
Using resolution of singularities we achieve that~$Z$ is a tidy divisor.
This property is slightly stronger than being an snc divisor (see Section\nobreakspace \ref {tidydivisors} for a definition).
For a tidy divisor~$Z$ we can handle $H^{i+1}_Z(X,\Lambda)$ using absolute cohomological purity.
Roughly speaking, a cohomology class in $H^{i+1}_Z(X,\Lambda)$ is killed by a $\mc$-covering which is sufficiently ramified along~$Z$ (see Section\nobreakspace \ref {killingcohomologywithsupport}).

The cohomology groups $H^i(X,\Lambda)$ are accessible because $\pi:X \to B$ is quite close to being an elementary fibration:
There is a base change theorem which asserts that for every geometric point $\bar{b} \to B$ we have
\[
 (R^j \pi_*\Lambda)_{\bar{b}} \cong H^j(X_{\bar{b}},\Lambda)
\]
(see Proposition\nobreakspace \ref {basechange}).
In particular, $R^j \pi_*\Lambda = 0$ for $j \geq 3$.
Moreover, $X_{\bar{b}}$ is an affine curve for all~$\bar{b}$ where~$X_{\bar{b}}$ is regular.
As there are only finitely many singular fibers, this implies that $R^2 \pi_*\Lambda$ is a skyscraper sheaf.

Let us specialize to the case we are primarily interested in in this article, namely where~$B$ is the spectrum of a henselian discrete valuation ring~$R$.
As mentioned before, $H^1(X,\Lambda)$ automatically vanishes in the limit over all $\mc$-coverings.
This leaves us to cope with the second cohomology.
Unfortunately, $H^2(X,\Lambda)$ is not necessarily killed by a $\mc$-covering.
However, a glance at sequence~(\ref{excisionsequence}) reveals that it suffices to show that
\[
 \coker(H^2_Z(X,\Lambda) \to H^2(X,\Lambda))
\]
is killed by a $\mc$-covering.
In Section\nobreakspace \ref {killingthecohomologyofhigherdirectimages} we give conditions for this to be true.

Having carved out conditions for an arithmetic surface to be $K(\pi,1)$ with respect to~$\mc$
 we set out to construct \'etale neighborhoods~$U$ on a given arithmetic surface satisfying these conditions.
The main difficulty lies in ensuring that~$U$ has enough $\mc$-coverings that are sufficiently ramified along the boundary (see Section\nobreakspace \ref {neighborhoodswithenoughtamecoverings}).

This article is based on parts of my thesis written under the supervision of Alexander Schmidt.
I would like to thank him for posing this interesting question and supporting me during the process of answering it.
Moreover, my thanks go to the referee whose suggestions helped me a lot in improving the overall structure of the paper.

\section{Tidy divisors on arithmetic surfaces} \label{tidydivisors}

Let~$X$ be a noetherian scheme.
Throughout this article we identify effective Cartier divisors on~$X$ with the associated closed subschemes of~$X$ whenever this does not lead to confusion.
If the ambient scheme is normal, we can do the same for effective Weil divisors.
Remember that an effective Cartier divisor~$D$ on~$X$ has \emph{simple normal crossings} at a point $x \in D$ if $X$ is regular at~$x$
 and there is a local system of parameters $f_1,\ldots,f_n$ at~$x$ and $m_1,\ldots,m_n \in \mN_0$ such that $f_1^{m_1}\cdot \ldots \cdot f_n^{m_n}$ provides a local equation for~$D$.
The effective Cartier divisor $D$ is a simple normal crossings (snc) divisor if it has simple normal crossings at each point.
Notice that an snc divisor does not necessarily have to be reduced.
We say that two effective Cartier divisors~$D$ and~$D'$ \emph{intersect transversally} at a point $x \in D \cap D'$ if they have no common irreducible component passing through~$x$
 and $D+D'$ has simple normal crossings at~$x$.

Suppose now that $X/B$ is an arithmetic surface.
An effective Cartier divisor~$D$ on~$X$ is \emph{tidy} at a point~$x \in D$
 if for every vertical divisor~$D'$ of~$X$ passing through~$x$, the sum $D+D'$ has simple normal crossings at~$x$.
A \emph{tidy} divisor on~$X$ is an effective Cartier divisor~$D$ which is tidy at every point of~$D$.
In particular, the horizontal irreducible components of a tidy divisor do not intersect.
For a proper closed subscheme~$Z \subseteq X$ we say that a closed point~$z \in Z$ is a \emph{special point} of~$Z$
 if either~$Z$ is not a tidy divisor at~$z$ or~$Z$ is a tidy divisor at~$z$ and~$Z_{\red}$ is singular at~$z$.
The special points of a tidy divisor~$D$ are precisely the points where two irreducible components of~$D$ intersect.
If~$D$ is not tidy but only snc, the special points are the singular points of~$D_{\mathit{red}}$ and the points where~$D$ intersects a vertical divisor whose irreducible components are not contained in the support of~$D$ non-transversally.

Let~$X'$ be another noetherian scheme and $Z \subset X$ and $Z' \subset X'$ closed subschemes.
By a morphism of pairs $f:(X',Z') \to (X,Z)$ we mean a Cartesian square
\[
 \begin{tikzcd}
  Z'	\ar[r]	\ar[d,hookrightarrow]	& Z	\ar[d,hookrightarrow]	\\
  X'	\ar[r]			& X
 \end{tikzcd}
\]
and write~$f$ also for the corresponding morphism $X' \to X$.
We define a \emph{minimal desingularization} of~$(X,Z)$ to be a proper morphism of pairs $\phi:(X',Z') \to (X,Z)$
 such that $X'$ is regular at all points of~$Z'$, the morphism $(X'-Z') \to (X-Z)$ is an isomorphism and~$\phi$ is universal with this property,
 \ie any other proper morphism $\phi'':(X'',Z'') \to (X,Z)$ as above factors through~$\phi$.
Minimal desingularizations of $(X,Z)$ exist by~\cite{MR0491722} and are unique up to unique isomorphism.

\begin{definition}
 Let~$X/B$ be an arithmetic surface and~$Z \subseteq X$ a proper closed subscheme.
 A \emph{tidy desingularization}~$(X',Z') \to (X,Z)$ of~$(X,Z)$ is a birational morphism~$X' \to X$ such that~$Z'$ is a tidy divisor of~$X'$ and~$(X',Z') \to (X,Z)$ factors as
 $$
 (X',Z') = (X_n,Z_n) \to \ldots \to (X_1,Z_1) \to (X_0,Z_0) \to (X,Z),
 $$
 where~$X_0 \to X$ is the minimal desingularization of~$(X,Z)$ and for~$i=1,\ldots,n$ the morphisms~$(X_i,Z_i) \to (X_{i-1},Z_{i-1})$ are blowups of~$X_{i-1}$ in special points of~$Z_{i-1}$.
\end{definition}


\begin{proposition} \label{tidydesingularization}
 Let~$X/B$ be an arithmetic surface and~$Z \subset X$ a proper closed subscheme.
 Then a tidy desingularization of~$(X,Z)$ exists.
\end{proposition}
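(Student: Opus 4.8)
The strategy is to start from the minimal desingularization $X_0 \to X$ of $(X,Z)$, whose existence and properties are already granted, and then repeatedly blow up special points until no special points remain. The key point is that each blowup in a special point strictly decreases a suitable measure of ``non-tidiness,'' so the process must terminate, and when it does the resulting divisor is tidy by definition of special point.

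\begin{proof}[Proof sketch]
First I would take $\phi_0\colon (X_0,Z_0)\to (X,Z)$ to be the minimal desingularization of $(X,Z)$, which exists by~\cite{MR0491722}; thus $X_0$ is regular at all points of $Z_0$. Replacing $Z_0$ by the associated effective Cartier divisor (its pullback is locally principal since $X_0$ is regular along $Z_0$), we may assume $Z_0$ is an effective Cartier divisor on a scheme regular in a neighborhood of $Z_0$. Now I proceed by induction, constructing the tower $(X_n,Z_n)\to\cdots\to(X_0,Z_0)$: given $(X_{i-1},Z_{i-1})$, if $Z_{i-1}$ is already tidy we stop; otherwise it has a special point $z$, and we let $(X_i,Z_i)\to(X_{i-1},Z_{i-1})$ be the blowup of $X_{i-1}$ in $z$ (with $Z_i$ the scheme-theoretic preimage of $Z_{i-1}$, so that we indeed have a morphism of pairs). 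Since $z$ is a regular point of a regular surface, this blowup is a standard quadratic transformation: the exceptional fiber is a projective line $E_i$, $X_i$ is again regular along $Z_i$, and the complement of $Z$ is untouched. It remains to argue termination and that the limit divisor is tidy.

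For termination I would attach to each pair $(X_i, Z_i)$ an invariant that drops under each blowup. The natural candidate is built from the multiplicities of the vertical and horizontal components of $Z_i$ together with the local intersection numbers at the special points (analogous to the classical resolution of plane curve singularities by successive blowups, and to the combinatorics of minimal embedded resolutions of arithmetic surfaces as in \cite{MR0491722}). Concretely: at a special point $z$ of $Z_{i-1}$, either $Z_{i-1}$ fails to be snc there or $Z_{i-1}$ is snc but $(Z_{i-1})_{\red}$ is singular at $z$, i.e.\ two components meet; blowing up $z$ separates the branches through $z$ and/or resolves the non-transversality with the vertical fiber, exactly as in the embedded resolution of curve singularities, and one checks the new special points on $E_i$ are ``simpler'' in the chosen measure. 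This is the step I expect to be the main obstacle: pinning down an invariant that provably decreases in \emph{every} case one can encounter — in particular correctly handling the interaction of horizontal branches with the (possibly multiple, possibly singular) vertical fiber components, where the multiplicities of the vertical part of $Z_i$ need to be controlled so they do not blow up under pullback. Here the reduction to the minimal desingularization first is essential, because it bounds the vertical components of $Z_0$ and puts us in a situation where the classical curve-resolution bookkeeping applies.

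Finally, once the procedure terminates at $(X_n, Z_n)$, the divisor $Z_n$ has no special points, which by the definition given in Section~\ref{tidydivisors} means precisely that $Z_n$ is tidy at every point — either it is already tidy at a point or, if it were only snc with $(Z_n)_{\red}$ singular, that point would be special. Since at each stage $X_i$ is regular along $Z_i$ and the maps are isomorphisms over $X-Z$, the composite $(X_n,Z_n)\to(X,Z)$ is a birational morphism of pairs of exactly the form demanded in the definition of a tidy desingularization, with $X_0\to X$ the minimal desingularization and each subsequent map a blowup in a special point. This completes the construction.
\end{proof}
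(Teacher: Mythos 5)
Your overall strategy (start from the minimal desingularization, blow up special points until none remain, and observe that the terminal object is by definition a tidy desingularization) is structurally the right shape, and it matches the form required by the definition. But the heart of Proposition~\ref{tidydesingularization} is precisely the step you flag and leave open: termination of the blowup process. Saying that one should attach an invariant ``analogous to the classical resolution of plane curve singularities'' and admitting that pinning it down is ``the main obstacle'' is not a proof; it is the statement that embedded resolution holds for excellent two-dimensional schemes, which is a genuine theorem (residue field extensions and wild tangencies mean that naive multiplicity does not strictly drop at every step, and a secondary invariant is needed). The paper does not reprove this: it quotes the canonical embedded resolution results of~\cite{2009arXiv0905.2191C}, Theorems~0.1 and~0.2, to obtain $(X',Z')\to(X,Z)$ with $Z'$ snc, arranged as successive blowups of the minimal desingularization in singular (hence special) points, and then adds a short separate argument to upgrade snc to tidy: it adjoins to $Z'$ the finitely many vertical prime divisors met non-transversally, discards the harmless singular points of this union lying off $Z'$, and blows up the remaining singular points of the reduced union. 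Your sketch conflates these two stages into one unproved termination claim, and in particular does not address how the interaction with vertical fibers (which is what distinguishes ``tidy'' from ``snc'') is brought under control; note that in the paper's argument some of the problematic points are not special points of $Z'$ at all and are removed rather than blown up.

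Two smaller points. First, the claim that one may ``replace $Z_0$ by the associated effective Cartier divisor'' because $X_0$ is regular along $Z_0$ is not correct as stated: a closed subscheme of a regular surface need not be a divisor (e.g.\ an isolated closed point, or a subscheme with embedded components); this is harmless only because after blowing up such points the preimage does become a divisor, but then it is part of the inductive process rather than a preliminary reduction. Second, if you want to complete your argument without citing~\cite{2009arXiv0905.2191C}, you must actually produce the decreasing invariant (multiplicity together with a tangency/intersection-number correction, in the style of embedded resolution for excellent surfaces) and verify that it handles both the singularities of $Z_{\red}$ and the non-transversal meetings with vertical divisors; as it stands the proposal has a gap exactly at the point where all the work lies.
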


\begin{proof}
 We may assume that~$X$ is regular at all points of~$X-Z$ as~$X$ is singular in at most a finite set of closed points, which we can remove from~$X$ if they do not lie on~$Z$.
 By~\cite{2009arXiv0905.2191C}, Theorems~0.1 and~0.2 there is a desingularization~$(X',Z') \to (X,Z)$ which is an isomorphism over the complement of~$Z$ such that~$Z'$ is an snc-divisor.
 Moreover, we can assume that~$(X',Z') \to (X,Z)$ is obtained from the minimal desingularization by successive blow-ups in singular, hence special, points.
 Let~$D'$ be the union of~$Z'$ with the finitely many vertical prime divisors containing the points where~$Z'$ intersects a vertical divisor non-transversally.
 After removing from~$X'$ all points of~$D'$ which are not contained in~$Z'$ and where~$D'$ is singular, we may assume that the special points of~$D'$ are contained in~$Z'$.
 By construction, they coincide with the singular points of~$D'_{\red}$.
 Blowing up in singular points of~$D'_{\red}$, we achieve that~$D'$ is an snc-divisor.
 This is equivalent to saying that~$Z'$ is tidy.
 \end{proof}

Let~$\mc$ be a full class of finite groups and denote by $\mN(\mc)$ the submonoid of the positive integers formed by the orders of all groups in~$\mc$.
For an arithmetic surface~$X/B$ such that all elements of $\mN(\mc)$ are invertible on~$X$ and a tidy divisor~$D$ on~$X$
 we want to examine whether $U:=X-D$ is $K(\pi,1)$ with respect to~$\mc$.
The cohomological criterion spelled out in Proposition\nobreakspace \ref {kpi1} leads us to study the $\mc$-coverings of~$U$.
We can extend a $\mc$-covering $U'\to U$ to a finite morphism $X'\to X$ by taking the normalization of~$X$ in~$U'$.
We obtain a $\mc$-covering of $(X,D)$, \ie a finite morphism of pairs $(X_1,D_1) \to (X,D)$ such that $X_1-D_1 \to X-D$ is an \'etale $\mc$-covering.
Any $\mc$-covering of $(X,D)$ is tame,
 \ie the valuations of $K(X)$ associated with the irreducible components of~$D$ are tamely ramified in the corresponding function field extension.
Otherwise there would be an irreducible component~$Z$ of~$D$ whose function field is of characteristic $p > 0$ dividing the degree of the covering.
However, we assumed the orders of all groups in~$\mc$ to be invertible on~$X$, hence not divisible by~$p$.

For a tame covering $(X_1,D_1) \to (X,D)$ the divisor~$D_1$ is not tidy in general.
But we find a tidy desingularization~$(X',D') \to (X_1,D_1)$ using Proposition\nobreakspace \ref {tidydesingularization}.

\begin{definition}
 A \emph{desingularized tame covering} $(X',D') \to (X,D)$ is the composition of a tame covering $(X_1,D_1) \to (X,D)$ and a tidy desingularization $(X',D') \to (X_1,D_1)$.
 In this case we define the exceptional divisor of~$(X',D') \to (X,D)$ to be the exceptional divisor of~$X' \to X_1$.
 A \emph{desingularized $\mc$-covering} is a tame covering $(X',D') \to (X_1,D_1) \to (X,D)$ such that $(X_1,D_1) \to (X,D)$ is a $\mc$-covering.
\end{definition}

\section{Setup and Notation} \label{setupandnotation}

Let~$\mc$ be a full class of finite groups.
Remember that a profinite group~$G$ is called $\mc$-good if for all $G(\mc)$-modules $M \in \mc$ and all $i \in \mN$ the inflation
\[
 H^i(G(\mc),M) \to H^i(G,M)
\]
is an isomorphism.
If~$G$ is the absolute Galois group of a field~$k$, this is equivalent to saying that $\Spec k$ is $K(\pi,1)$ with respect to~$\mc$.
We need a slightly stronger version:
Denote by~$H$ the kernel of the surjection $G \to G(\mc)$.
We say that~$G$ is \emph{strongly $\mc$-good} if for all $G$-modules $M \in \mc$ and all $i \in \mN$
\[
 \alpha_M^i: H^i(G(\mc),M^H) \to H^i(G,M)
\]
is an isomorphism.
This is equivalent to saying that for all $j \ge 1$
\[
 H^j(H,M) = 0.
\]
Indeed, an inspection of the Hochschild-Serre spectral sequence
\[
 H^i(G(\mc),H^j(H,M)) \Rightarrow H^{i+j}(G,M)
\]
shows that~$\alpha_M^i$ is an isomorphism for all $i \ge 0$ if $H^j(H,M) = 0$ for all $j \ge 1$.
For the reverse direction assume that~$\alpha_M^j$ is an isomorphism for all $G$-modules $M \in \mc$.
Then for $j \ge 1$
\begin{IEEEeqnarray*}{rCl}
 H^j(H,M)	& \cong	& H^j(G,\Ind_G^H (M)		\\
		& \cong	& H^j(G(\mc),(\Ind_G^H(M))^H)	\\
		& \cong	& H^j(G(\mc),\Ind_{G(\mc)}(M^H)) = 0.
\end{IEEEeqnarray*}
One example for a strongly $\mc$-good group is~$\hat{\mZ}$, the absolute Galois group of a finite field.
For a prime $p \notin \mN(\mc)$ take an algebraic extension of~$\mQ_p$ containing the $\ell\NTH$ roots of unity for every prime $\ell \in \mN(\mc)$.
Then its absolute Galois group provides another example for a strongly $\mc$-good group.

We now set up the notation for Section\nobreakspace \ref {killingcohomologywithsupport} and~Section\nobreakspace \ref {killingthecohomologyofhigherdirectimages}.
We fix an excellent Dedekind scheme of dimension~$1$.
Furthermore, we take a full class of finite groups~$\mc$ such that all prime numbers~$\ell \in \mN(\mc)$ are invertible on~$B$ and~$\mu_\ell \cong \mZ/\ell\mZ$ on~$B$.
We assume that the absolute Galois groups of the residue fields of~$B$ at closed points are strongly $\mc$-good.

Over~$B$ we fix a proper arithmetic surface~$\bar{X}$ with geometric point~$\bar{x} \to \bar{X}$ lying over a closed point~$x \in \bar{X}$.
Let~$\bar{D}\subseteq \bar{X}$ be a tidy divisor whose support does not contain~$x$.
Let~$\bar{D}_h$ be the maximal subdivisor of~$\bar{D}$ with support on the isolated horizontal components of~$\bar{D}$, \ie, on the horizontal components which do not intersect any other component.
Set~$X = \bar{X} - \bar{D}_h$ and~$U = \bar{X} - \bar{D}$ and denote by~$D \subseteq X$ the restriction of~$\bar{D}$ to~$X$.
We write~$D_v$ for the maximal vertical subdivisor of~$D$ and~$D_h$ for the maximal horizontal subdivisor, such that~$D = D_v + D_h$.
Notice that~$D_v$ is also the maximal vertical subdivisor of~$\bar{D}$.
The maximal horizontal subdivisor of~$\bar{D}$ is given by~$\bar{D}_h + D_h$.
Let~$W$ denote the union of all vertical prime divisors which are contained in a singular fiber of~$\bar{X} \to B$ but not in~$\bar{D}$.
Put differently,~$W$ is the Zariski closure of the union of all reduced fibers~$(U_b)_{\mathit{red}}$ where~$\bar{X}_b$ is singular.
With this notation we also include all singular fibers whose underlying reduced scheme is regular but this does not make much of a difference.
Denote by~$S$ the finite set of special points of~$\bar{D}$, \ie the set of singular points of~$\bar{D}_{\mathit{red}}$.

We denote by~$\mathfrak{I}_{(\bar{X},\bar{D})}$ the category of all pointed desingularized~$\mc$-coverings of~$(\bar{X},\bar{D})$.
We will see in Proposition\nobreakspace \ref {cofiltered} that~$\mathfrak{I}_{(\bar{X},\bar{D})}$ is cofiltered.
Viewing~$\bar{x}$ as geometric point of~$B$ we write~$\mathfrak{I}_B$ for the category of pointed finite \'etale~$\mc$-coverings of~$B$.
Consider the functor $\mathfrak{I}_B \to \mathfrak{I}_{(\bar{X},\bar{D})}$ given by
$$
(B' \to B) \mapsto ((\bar{X} \times_B B',\bar{D} \times_B B') \to (\bar{X},\bar{D})).
$$
It is fully faithful for the following reason:
Take two objects~$B'$ and~$B''$ of~$\mathfrak{I}_B$.
Since the generic fiber of~$\bar{X}$ is geometrically connected, $\bar{X} \times_B B'$ is connected.
It is moreover normal as~$\bar{X}$ is normal and $B' \to B$ is \'etale.
Hence, we can speak of its function field and we have $K(\bar{X} \times_B B') = K(\bar{X}) \otimes_{K(B)} K(B')$.
The same reasoning applies to~$B''$.
In the diagram
\[
 \begin{tikzcd}
  \Hom_{\mathfrak{I}_B}(B'',B')	\ar[r]	\ar[d]	& \Hom_{\mathfrak{I}_{(\bar{X},\bar{D})}}(\bar{X} \times_B B'',\bar{X} \times_B B')	\ar[d]	\\
  \Hom_{K(B)}(K(B'),K(B''))	\ar[r]		& \Hom_{K(B)}(K(B'),K(\bar{X}) \otimes_{K(B)} K(B''))
 \end{tikzcd}
\]
the vertical arrows are injective, the left vertical arrow is even an isomorphism as~$B'$ and~$B''$ are Dedekind schemes.
The lower horizontal map is an isomorphism by the geometric connectedness of the generic fibers of $\bar{X}$.
We conclude that the upper horizontal map is an isomorphism.
Hence, $\mathfrak{I}_B \to \mathfrak{I}_{(\bar{X},\bar{D})}$ is fully faithful and we can view $\mathfrak{I}_B$ as a full subcategory of~$\mathfrak{I}_{(\bar{X},\bar{D})}$.

For $\bar{\pi}:(\bar{X}',\bar{D}') \to (\bar{X},\bar{D})$ in~$\mathfrak{I}_{(\bar{X},\bar{D})}$ let
$$
\bar{X}' \to B' \to B
$$
be the Stein factorization of~$\bar{X}' \to \bar{X} \to B$.
Then~$\bar{X}'$ is an arithmetic surface over~$B'$.
We use analogous notation for~$(\bar{X}',\bar{D}')$ as for~$(\bar{X},\bar{D})$:
We write~$U'$ for~$\bar{X}'-\bar{D}'$ and~$\bar{D}'_h$ for the maximal subdivisor of~$\bar{D}'$ with support on the isolated horizontal components of~$\bar{D}'$ and so on.
Moreover, we write~$E'$ for the exceptional divisor of~$\bar{X}' \to \bar{X}$.

\begin{lemma} \label{preimages}
 Let~$\bar{\pi}:(\bar{X}',\bar{D}') \to (\bar{X}_1,\bar{D}_1) \to (\bar{X},\bar{D})$ be a desingularized $\mc$-covering.
 Then $\bar{\pi}^*\bar{D}_h = \bar{D}'_h$ and~$D'_h$ is the horizontal part of~$\bar{\pi}^*D_h$.
\end{lemma}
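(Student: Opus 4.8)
The plan is to factor $\bar\pi$ as a tame $\mc$-covering $\bar\pi_1:\bar X_1\to\bar X$ followed by a tidy desingularization $\bar\pi_2:\bar X'\to\bar X_1$, and to reduce the assertion to a question about which prime divisors of $\bar D'$ are \emph{isolated} (i.e.\ meet no other component of $\bar D'$). Both $\bar\pi_1$ and $\bar\pi_2$ are morphisms of pairs, so the defining Cartesian squares identify $\bar D_1$ with the Cartier pullback $\bar\pi_1^*\bar D$ and $\bar D'$ with $\bar\pi_2^*\bar D_1=\bar\pi^*\bar D$ (the local equations of $\bar D$ stay nonzerodivisors because $\bar X_1$ and $\bar X'$ are integral). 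Next I would record a few elementary facts: $\bar\pi$ sends horizontal, resp.\ vertical, prime divisors to horizontal, resp.\ vertical, ones, since it commutes with the structure maps to the Dedekind bases; every exceptional prime divisor of $\bar\pi_2$ is vertical, being located over a closed singular point of $\bar X_1$ or over a closed special point that is blown up; and preimages of disjoint closed sets are disjoint. Writing $\bar D=\bar D_h+D_h+D_v$ and using that the support of $\bar D_h$ is disjoint from that of $D_h+D_v$, I get $\bar D'=\bar\pi^*\bar D_h+\bar\pi^*D_h+\bar\pi^*D_v$ with $\bar\pi^*D_v$ vertical and the support of $\bar\pi^*\bar D_h$ disjoint from the other two summands; hence the horizontal part of $\bar D'$ is the disjoint sum of the horizontal parts of $\bar\pi^*\bar D_h$ and of $\bar\pi^*D_h$. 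Since the horizontal part of $\bar D'$ is \emph{also} the disjoint sum $\bar D'_h+D'_h$, and since every horizontal prime of $\bar D'$ maps onto a horizontal component of $\bar D$ (its image is a one-dimensional irreducible closed subset of the support of $\bar D$), it suffices to prove: (a) every prime of $\bar\pi^*\bar D_h$ is horizontal and isolated in $\bar D'$; and (b) every horizontal prime of $\bar\pi^*D_h$ is non-isolated in $\bar D'$. A short matching of the two disjoint decompositions of the horizontal part of $\bar D'$ (if $A\sqcup B=C\sqcup D$ with $A\subseteq C$, $B\subseteq D$ then $A=C$, $B=D$), together with the computed supports, then gives $\bar\pi^*\bar D_h=\bar D'_h$ and $D'_h=$ the horizontal part of $\bar\pi^*D_h$, which is the lemma.

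Statement (b) is easy. Let $C'$ be a horizontal prime of $\bar\pi^*D_h$ and put $C=\bar\pi(C')$, a component of $D_h$; being non-isolated, $C$ meets another component $C_2$ of $\bar D$ in a point $x$. Choose $c'\in C'$ over $x$. Then $c'$ lies in $\bar\pi^{-1}(C_2)\subseteq\bar\pi^{-1}(\mathrm{Supp}\,\bar D)=\mathrm{Supp}\,\bar D'$, so $c'$ lies on a prime component $G'$ of $\bar D'$ whose image is contained in $C_2$; moreover $G'\neq C'$, because $\bar\pi(C')=C\neq C_2$ while $\bar\pi(G')\subseteq C_2$ (equality $G'=C'$ would force $C\subseteq C_2$, hence $C=C_2$). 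Thus $C'$ meets another component of $\bar D'$.

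Statement (a) is the heart of the proof. Fix an isolated horizontal component $C$ of $\bar D$, of multiplicity $m$, so that $\bar D=mC$ near $C$. As $C$ lies in the support of the tidy divisor $\bar D$, the scheme $\bar X$ is regular along $C$, $C$ itself is regular, and for every $x\in C$ the fibre of $\bar X\to B$ through $x$ has the shape $(\mathrm{mult})\cdot V(f_2)$, where $f_2$ complements a local equation $f_1$ of $C$. The covering $\bar\pi_1$ is \'etale away from $\bar D$ and tame along it, so near $C$ its branch locus is the regular divisor $C$; by Abhyankar's lemma $\bar\pi_1$ is, \'etale-locally over $C$, a disjoint union of Kummer covers $z^e=f_1$. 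Hence $\bar X_1$ is regular along $\bar\pi_1^{-1}(C)$, the primes of $\bar\pi_1^{-1}(C)$ are regular and pairwise disjoint, and the local description shows that $\bar D_1=\bar\pi_1^*\bar D$ is tidy at every point of $\bar\pi_1^{-1}(C)$ with $(\bar D_1)_{\red}$ regular there (locally $\bar D_1=(\mathrm{mult})\cdot V(z)$, and every vertical divisor through such a point is supported on $V(f_2)$, so the sum has a simple normal crossings equation $z^a f_2^b$). Therefore no point of $\bar\pi_1^{-1}(C)$ is a special point and $\bar X_1$ needs no modification there, so $\bar\pi_2$ is an isomorphism over a neighbourhood of $\bar\pi_1^{-1}(C)$. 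In particular $\bar\pi^*\bar D_h$ is horizontal, and over $C$ its primes are exactly the pairwise disjoint components of $\bar\pi_1^{-1}(C)$; such a prime $C'$ cannot meet another component of $\bar D'$, because a component of $\bar D'$ lying over the support of $\bar D-C$ is disjoint from $\bar\pi^{-1}(C)$ by disjointness of preimages, while near $\bar\pi_1^{-1}(C)$ the only components of $\bar D'\cong\bar D_1$ are these pairwise disjoint ones. This proves (a), and with it the lemma.

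The main obstacle is precisely the core of (a): showing that the tidy desingularization leaves a neighbourhood of the preimage of an isolated horizontal component untouched, equivalently that $\bar D_1$ is already tidy there, so that no exceptional --- hence necessarily non-isolated --- divisor meeting it is created. This is where the tameness of $\bar\pi_1$ (through Abhyankar's lemma) and the exact local geometry of $\bar D$ near an isolated horizontal component (transversality of the fibre, up to multiplicity, forced by tidiness of $\bar D$) are indispensable; the remainder is formal bookkeeping with supports of Cartier divisors and their images under $\bar\pi$.
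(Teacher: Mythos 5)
Your argument is correct, and half of it runs along the same lines as the paper's proof: you start from the same decomposition $\bar{D}'=\bar{\pi}^*\bar{D}_h+\bar{\pi}^*D_h+\bar{\pi}^*D_v$ with multiplicities coming for free once the components are sorted, and your statement (a) is proved exactly as in the paper, namely by Abhyankar's lemma along the regular, isolated divisor $\bar{D}_h$: the branch locus is regular there, so $\bar{X}_1$ is regular along $\bar{\pi}_1^{-1}(\bar{D}_h)$, this preimage is regular with pairwise disjoint components and carries no special points, hence the tidy desingularization leaves a neighbourhood of it untouched and $\bar{\pi}^*\bar{D}_h$ consists of isolated horizontal components (the paper quotes the generalized Abhyankar lemma of SGA~1, Exp.~XIII, where you work it out via the local Kummer equation $z^e=f_1$; same tool). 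Where you genuinely diverge is the other half. The paper shows that every component of $\bar{D}'_h$ maps to a component of $\bar{D}_h$ by invoking the logarithmic Abhyankar lemma: $(\bar{X}_1,\bar{D}_1)\to(\bar{X},\bar{D})$ is Kummer \'etale, hence flat, so $\bar{D}_1\to\bar{D}$ is open and closed and connected components surject onto connected components. Your (b) proves the contrapositive by a purely elementary argument: a horizontal prime $C'$ over a non-isolated component $C$ of $D_h$ surjects onto $C$ (properness plus horizontality), so a point of $C'$ above an intersection point $x\in C\cap C_2$ lies on a component of $\bar{\pi}^*C_2\le\bar{D}'$ distinct from $C'$, whence $C'$ is non-isolated. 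This avoids flatness and the log-geometric input entirely and costs nothing here; the paper's route yields the slightly stronger structural fact that connected components upstairs map onto connected components downstairs, independent of the isolation hypothesis you exploit. One step you compress: that the tidy desingularization is an isomorphism over a neighbourhood of $\bar{\pi}_1^{-1}(\bar{D}_h)$ uses both that the minimal desingularization is an isomorphism over the regular locus and that all later blowup centres are special points, hence never lie over that neighbourhood; this is an easy induction, but it is the actual content of your phrase ``needs no modification there.''
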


\begin{proof}
 Since $\bar{D}' = \bar{\pi}^*\bar{D}$, and by the definition of~$\bar{D}_h$ and~$D_h$, we have
 \[
  \bar{\pi}^*(\bar{D}_h+D_h) = \bar{D}'_h + D'_h + C',
 \]
 where~$C'$ is a vertical divisor with support in~$E'$.
 Moreover, the supports of~$\bar{D}_h$ and~$D_h$ are disjoint.
 Hence, it suffices to show that $\bar{\pi}$ maps the irreducible components of~$\bar{D}'_h$ to $\bar{D}_h$ and those of~$D'_h$ to~$D_h$
  and the part of the statement concerning multiplicities is automatic.
 By the logarithmic version of Abhyankar's lemma (\cite{GR11}, Thm.~7.3.44) $(\bar{X}_1,\bar{D}_1) \to (\bar{X},\bar{D})$ is Kummer \'etale, hence flat.
 Therefore, $\bar{D}_1 \to \bar{D}$ is flat and moreover proper and of finite presentation.
 It is thus open and closed.
 We conclude that a connected component of~$\bar{D}_1$ is mapped surjectively onto a connected component of~$\bar{D}$.
 Furthermore, connected components of~$\bar{D}'$ are mapped surjectively onto connected components of~$\bar{D}_1$.
 Every irreducible component of~$\bar{D}'_h$ is thus mapped to an irreducible component of~$\bar{D}_h$.

 Since~$\bar{D}_h$ is regular and does not intersect the other components of~$\bar{D}$,
  the tame covering~$(\bar{X}_1,\bar{D}_1) \to (\bar{X},\bar{D})$ has regular branch locus in a neighborhood of~$\bar{D}_h$.
 By the generalized Abhyankar lemma (see~\cite{SGA1}, Exp.~XIII, 5.3.0) the preimage~$\bar{D}_{h,1}$ of~$\bar{D}_h$ in~$\bar{X}_1$ is again regular
  and~$\bar{X}_1$ is regular in a neighborhood of~$\bar{D}_{h,1}$.
 In particular,~$\bar{D}_{h,1}$ does not contain special points and thus~$\bar{\pi}^*(\bar{D}_h)$ is contained in~$\bar{D}'_h$.
 Hence, the image of every irreducible component of~$D'_h$ is contained in~$D'_h$.
\end{proof}

As a consequence of Lemma\nobreakspace \ref {preimages} we have $\bar{\pi}^*X = X'$.
We denote the restriction of~$\bar{\pi}$ to~$X'$ by
\[
 \pi: X' \to X.
\]
Notice that $\bar{\pi}^*D_v + C' = D'_v$, where~$C'$ is the vertical divisor defined in the proof of Lemma\nobreakspace \ref {preimages}.
Since the support of~$C'$ is contained in the support of $\bar{\pi}^*D_v$, we obtain
\[
 (\bar{\pi}^*D_v)_{\red} = (D'_v)_{\red}.
\]

Our objective is to investigate whether~$U$ is $K(\pi,1)$ with respect to~$\mc$.
In this article we treat the case where~$B$ is local henselian.
The above setup is more general because we plan another paper on the global case,
 i.e. where~$B$ is an open subscheme of the spectrum of the ring of integers of a finite extension of~$\mQ$.
A great part of the proof is not much more difficult in the global case.
Hence, we prove many propositions in a more general setting and specialize to the local case only when this is considerably easier.

For a morphism of schemes $f:Y \to S$, a closed subscheme~$Z$ of~$Y$, and a sheaf~$\mathcal{F}$ on the \'etale site of~$Y$
 we write $R^j_Z f_* \mathcal{F}$ for the higher direct images with support in~$Z$.
Writing $i:Z \hookrightarrow Y$ for the inclusion, they are the derived functors of $f|_{Z,*}\circ i^!$,
 which sends an \'etale sheaf~$\mathcal{F}$ on~$Y$ to the sheaf
\[
 (S' \to S) \mapsto \ker(\mathcal{F}(Y \times_S S') \to \mathcal{F}((Y-Z) \times_S S')) = H^0_{Z \times_S S'}(Y \times_S S',\mathcal{F})
\]
on the \'etale site of~$S$.
Write $j: V \hookrightarrow Z$ for the open embedding of the complement of~$Z$.
Applying $Rf_*$ to the distinguished triangle
\[
 i_*Ri^!\mathcal{F} \to \mathcal{F} \to Rj_*j^*\mathcal{F} \to i_*Ri^!\mathcal{F}[1]
\]
we obtain a relative excision sequence
\[
 \ldots \to R_Z^if_*\mathcal{F} \to R^if_*\mathcal{F} \to R^if|_{V,*}\mathcal{F}|_V \to \ldots.
\]

\begin{proposition} \label{firstreductions}
 In the above notation assume that~$B$ is a henselian discrete valuation ring
  and that for all primes $\ell \in \mN(\mc)$ setting $\Lambda = \mZ/\ell\mZ$ the following conditions are satisfied:
 \begin{enumerate}[label=(\arabic*)]
  \item $\varinjlim_{\mathfrak{I}_{(\bar{X},\bar{D})}} H^i_{D'}(X',\Lambda) = 0$ for $i \ge 3$ and
  \item $\varinjlim_{\mathfrak{I}_{(\bar{X},\bar{D})}} \coker\big(H^0(B',R^2_{D'} \pi'_* \Lambda) \to H^0(B',R^2 \pi'_* \Lambda)\big) = 0$.
 \end{enumerate}
 Then~$U$ is $K(\pi,1)$ with respect to~$\mc$.
\end{proposition}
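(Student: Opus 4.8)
The strategy is to verify condition~(ii) of Proposition~\ref{kpi1} for~$U$. Fix a prime $\ell\in\mN(\mc)$, set $\Lambda=\mZ/\ell\mZ$, let $i\ge 1$, let $U'\to U$ be a (connected) $\mc$-covering and $\phi\in H^i(U',\Lambda)$; we must produce a $\mc$-covering $U''\to U'$ with $\phi|_{U''}=0$. For $i=1$ this is automatic since $\phi$ classifies an \'etale $\Lambda$-torsor, which is split by a $\mc$-covering. So assume $i\ge 2$. Normalizing~$\bar X$ in the function field of~$U'$ yields a tame $\mc$-covering $(\bar X_1,\bar D_1)\to(\bar X,\bar D)$ with $\bar X_1-\bar D_1=U'$, and a tidy desingularization $(\bar X',\bar D')\to(\bar X_1,\bar D_1)$ (an isomorphism over~$U'$) produces an object $(\bar X',\bar D')$ of $\mathfrak I_{(\bar X,\bar D)}$ with $\bar X'-\bar D'=U'$; by Lemma~\ref{preimages} (and the remark following it) $X'$ is the preimage of~$X$ in~$\bar X'$, so $U'=X'-D'$. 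Since $\mathfrak I_{(\bar X,\bar D)}$ is cofiltered (Proposition~\ref{cofiltered}) and its morphisms restrict to $\mc$-coverings of the corresponding open subschemes, it suffices to find one morphism $g\colon(\bar X'',\bar D'')\to(\bar X',\bar D')$ in $\mathfrak I_{(\bar X,\bar D)}$ along which the pullback of~$\phi$ to $U''=X''-D''$ vanishes; note $X''=g^*X'$ by the same remark, so~$g$ is in particular a morphism of pairs $(X'',D'')\to(X',D')$.

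Next I would use the excision sequence~(\ref{excisionsequence}) for the pair $(X',D')$, which is functorial along $\mathfrak I_{(\bar X,\bar D)}$ since $X'$ is always the preimage of~$X$:
\[ H^i(X',\Lambda)\xrightarrow{\ \rho\ }H^i(U',\Lambda)\xrightarrow{\ \partial\ }H^{i+1}_{D'}(X',\Lambda). \]
As $i+1\ge 3$, condition~(1) lets us pull $\partial\phi$ back to~$0$ in $H^{i+1}_{D''}(X'',\Lambda)$ for some object $(\bar X'',\bar D'')$ over $(\bar X',\bar D')$; after renaming we may assume $\phi=\rho(\psi)$ with $\psi\in H^i(X',\Lambda)$. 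The composite $H^i_{D'}(X',\Lambda)\to H^i(X',\Lambda)\xrightarrow{\rho}H^i(U',\Lambda)$ vanishes by exactness of~(\ref{excisionsequence}), so it now suffices to kill the image of~$\psi$ in $\coker\bigl(H^i_{D'}(X',\Lambda)\to H^i(X',\Lambda)\bigr)$ after passing further in $\mathfrak I_{(\bar X,\bar D)}$.

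To handle this cokernel I would compare the Leray spectral sequence $H^p(B',R^q\pi'_*\Lambda)\Rightarrow H^{p+q}(X',\Lambda)$ with its analogue with supports $H^p(B',R^q_{D'}\pi'_*\Lambda)\Rightarrow H^{p+q}_{D'}(X',\Lambda)$ through the natural map between them coming from the relative excision triangle. By the base change theorem (Proposition~\ref{basechange}) we have $R^q\pi'_*\Lambda=0$ for $q\ge 3$, and $R^2\pi'_*\Lambda$ is a skyscraper sheaf at the closed point of~$B'$; the same applies to the higher direct images with supports. A d\'evissage along the two filtrations then reduces the task to killing, for each pair $(p,q)$ with $p+q=i$ and $q\le 2$, the contribution of $\coker\bigl(H^p(B',R^q_{D'}\pi'_*\Lambda)\to H^p(B',R^q\pi'_*\Lambda)\bigr)$ in the colimit over $\mathfrak I_{(\bar X,\bar D)}$. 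For $(p,q)=(0,2)$ this is exactly the group appearing in condition~(2) and dies by hypothesis. For $q=2$ and $p\ge 1$ both terms are Galois cohomology groups $H^p(k',M)$ of a residue field~$k'$ of~$B'$ with coefficients in a finite module $M\in\mc$ — here $\ell\in\mN(\mc)$ forces every finite $\ell$-group into the full class~$\mc$, and $\mu_\ell\cong\mZ/\ell\mZ$ on~$B$ eliminates all Tate twists — and these die because the absolute Galois group of~$k'$ is (strongly) $\mc$-good. The remaining terms, with $q\le 1$, are disposed of by combining the $\mc$-goodness of the residue fields with the one-dimensional case applied to the generic fibre~$U'_{\eta'}$ (cf.~\cite{MR2629694}), using that a $\mc$-covering of~$U'_{\eta'}$ or of~$B'$ spreads out to a $\mc$-covering of~$U'$ and hence defines an object of $\mathfrak I_{(\bar X,\bar D)}$. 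Assembling these reductions produces the required morphism and finishes the proof.

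I expect the main obstacle to be this last step: the d\'evissage of the two Leray spectral sequences over the henselian base must be carried out carefully enough to confirm that the only contribution not already killed by the good behaviour of $\mc$-cohomology over~$B'$ and over the generic fibre is precisely the one singled out in condition~(2) — in particular one must match the interior term $H^0(B',R^2\pi'_*\Lambda)$ with its boundary counterpart $H^0(B',R^2_{D'}\pi'_*\Lambda)$, and control the non-skyscraper sheaf $R^1\pi'_*\Lambda$ — all while keeping the successive passages to further $\mc$-coverings of~$U'$ consistent within the cofiltered category $\mathfrak I_{(\bar X,\bar D)}$.
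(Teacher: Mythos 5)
Your overall route is the same as the paper's: reduce via Proposition~\ref{kpi1} to killing classes in $H^i(U',\Lambda)$ in the colimit over $\mathfrak{I}_{(\bar{X},\bar{D})}$, use excision together with hypothesis~(1) to pass to $H^i(X',\Lambda)$, then compare the Leray spectral sequences for~$\pi'$ with and without supports, using Proposition~\ref{basechange} and strong $\mc$-goodness of the residue fields to kill everything except the $(0,2)$-term, which is hypothesis~(2). However, the step you yourself flag as the ``main obstacle'' is a genuine gap as written, and it is exactly where the paper inserts purity. Matching $H^0(B',R^2_{D'}\pi'_*\Lambda)$ with $H^2_{D'}(X',\Lambda)$ is not a formal consequence of comparing $E_2$-terms: a class of $H^0(B',R^2_{D'}\pi'_*\Lambda)$ produced by hypothesis~(2) must first be lifted to $H^2_{D'}(X',\Lambda)$ before it can be pushed into $H^2(X',\Lambda)$, and a priori the edge map with supports need not be surjective. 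The paper gets this from absolute purity: $R^j_{D'}\pi'_*\Lambda=0$ for $j\le 1$, so the supports spectral sequence degenerates in this range and $H^2_{D'}(X',\Lambda)\to H^0(B',R^2_{D'}\pi'_*\Lambda)$ is an isomorphism; then a given $\psi\in H^2(X',\Lambda)$ agrees, modulo the kernel of the edge map (which dies in the colimit because the $(1,1)$- and $(2,0)$-terms die), with the image of a class supported on~$D'$, which kills $\coker\bigl(H^2_{D'}(X',\Lambda)\to H^2(X',\Lambda)\bigr)$. Without this input your reduction to cokernels of $E_2$-terms does not control that cokernel.

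A second, smaller point: the appeal to the one-dimensional case on the generic fibre for the $q\le 1$ terms is unnecessary and, as stated, unsubstantiated. For $p+q=i\ge 2$ and $q\le 1$ one automatically has $p\ge 1$, and since $B'$ is local henselian one has $H^p(B',\mathcal{F})\cong H^p(k(b'),\mathcal{F}_{\bar{b}'})$ for any torsion \'etale sheaf $\mathcal{F}$ --- in particular for the non-skyscraper sheaf $R^1\pi'_*\Lambda$ --- so all these terms die in the colimit by strong $\mc$-goodness alone, exactly as the $q=2$, $p\ge 1$ terms do. With these two repairs your argument coincides with the paper's proof.
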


\begin{proof}
 By Proposition\nobreakspace \ref {kpi1} it is enough to show that for any $i \ge 2$ and $\Lambda = \mZ/\ell\mZ$ for a prime $\ell \in \mN(\mc)$
\[
 \varinjlim_{U' \to U} H^i(U',\Lambda) = 0,
\]
where the limit is taken over all $\mc$-coverings of~$U$.
Taking the limit of the excision sequences associated with $(X',D')$ for all desingularized $\mc$-coverings $(X',D') \to (X,D)$ we obtain a long exact sequence
\[
 \ldots \to \varinjlim_{\mathfrak{I}_{(\bar{X},\bar{D})}} H^i(X',\Lambda) \to \varinjlim_{\mathfrak{I}_{(\bar{X},\bar{D})}} H^i(U',\Lambda) \to \varinjlim_{\mathfrak{I}_{(\bar{X},\bar{D})}} H^{i+1}_{D'}(X',\Lambda) \to \ldots.
\]
Using condition~(1) we obtain 
\[
 \varinjlim_{\mathfrak{I}_{(\bar{X},\bar{D})}} H^i(U',\Lambda) \cong \varinjlim_{\mathfrak{I}_{(\bar{X},\bar{D})}} H^i(X',\Lambda)
\]
for $i \ge 3$ and an exact sequence
\[
 \varinjlim_{\mathfrak{I}_{(\bar{X},\bar{D})}} H^2_{D'}(X',\Lambda) \to \varinjlim_{\mathfrak{I}_{(\bar{X},\bar{D})}} H^2(X',\Lambda) \to \varinjlim_{\mathfrak{I}_{(\bar{X},\bar{D})}} H^2(U',\Lambda) \to 0.
\]
For a desingularized $\mc$-covering $(X',D') \to (X,D)$ consider the Leray spectral sequence
\[
 H^i(B',R^j\pi'_*\Lambda) \Rightarrow H^{i+j}(X',\Lambda).
\]
Let~$\bar{b}' \to B$ be a geometric point of~$B'$ lying over the closed point~$b'$ of~$B'$ (compatible with~$\bar{x}$).
In Section\nobreakspace \ref {absolutecohomologicalpurity} we will see that absolute cohomological purity and proper base change for $\bar{X}/B$ imply that
 $(R^j\pi'_*\Lambda)_{\bar{b}'} = H^i(X'_{\bar{b}'},\Lambda)$
 (see Proposition\nobreakspace \ref {basechange}).
Since~$X'_{\bar{b}'}$ is a curve over an algebraically closed field, $H^j(X'_{\bar{b}'},\Lambda)$ vanishes for $j \ge 3$.
Moreover, for $i \ge 1$
\[
 \varinjlim_{\mathfrak{I}_{B',\bar{x}}} H^i(B',R^j\pi'_*\Lambda) = \varinjlim_{\mathfrak{I}_{B',\bar{x}}} H^i(k(b'),H^j(X'_{\bar{b}'},\Lambda)) = 0
\]
as the absolute Galois group of $k(b')$ is strongly $\mc$-good by assumption.
In total the above limit vanishes whenever $i+j \ge 3$ and for $(i,j) = (1,1)$ and $(i,j) = (2,0)$.
This implies that
\[
 \varinjlim_{\mathfrak{I}_{(\bar{X},\bar{D})}} H^i(X',\Lambda) = 0
\]
for $i \ge 3$ and
\begin{equation} \label{kerneledge}
 \varinjlim_{\mathfrak{I}_{(\bar{X},\bar{D})}} \ker(H^2(X',\Lambda) \overset{\mathit{edge}}{\longrightarrow} H^0(B',R^2 \pi'_* \Lambda)) = 0.
\end{equation}
Consider the diagram
 \[
  \begin{tikzcd}
   \displaystyle\varinjlim_{\mathfrak{I}_{(\bar{X},\bar{D})}} H^2_{D'}(X',\Lambda)		\ar[r]	\ar[d,"\text{edge}","\sim"']	& \displaystyle\varinjlim_{\mathfrak{I}_{(\bar{X},\bar{D})}} H^2(X',\Lambda)			\ar[r,twoheadrightarrow]	\ar[d,hookrightarrow,"\text{edge}"]	& \displaystyle\varinjlim_{\mathfrak{I}_{(\bar{X},\bar{D})}} H^2(U',\Lambda)	\\
   \displaystyle\varinjlim_{\mathfrak{I}_{(\bar{X},\bar{D})}} H^0(B',R^2_{D'} \pi'_* \Lambda)	\ar[r,twoheadrightarrow]		& \displaystyle\varinjlim_{\mathfrak{I}_{(\bar{X},\bar{D})}} H^0(B',R^2 \pi'_* \Lambda).										&
  \end{tikzcd}
 \]
 The left vertical arrow is an isomorphism because due to purity~$R^j_D \pi_* \Lambda = 0$ for~$j \leq 1$.
 The surjectivity of the lower horizontal arrow is due to condition~(2) and the injectivity of the right vertical arrow is stated above (see~(\ref{kerneledge})).
 We conclude that the upper left horizontal map is surjective, whence
 \[
  \varinjlim_{\mathfrak{I}_{(\bar{X},\bar{D})}} H^2(U',\Lambda) = 0.
 \]
\end{proof}

\section{Absolute cohomological purity} \label{absolutecohomologicalpurity}

Before we go into the details of discussing condition~(1) in Proposition\nobreakspace \ref {firstreductions} we draw some conclusions from Gabber's absolute purity theorem
 that will be crucial in the treatment of cohomology groups with support.

Let~$X$ be a noetherian, regular scheme and $Z \subseteq X$ a regular closed subscheme of pure codimension~$c$.
Then $(X,Z)$ is called a regular pair of codimension~$c$.
Fix a positive integer~$m$ invertible on~$X$ and set~$\Lambda = \mZ/m\mZ$.
The absolute cohomological purity theorem proved by Gabber in~\cite{Fuj00} provides a canonical isomorphism
$$
\underline{H}^n_Z(\Lambda) \cong \begin{cases}
				  0		& \text{for } n \neq 2c	\\
				  \Lambda_Z(-c)	& \text{for } n = 2c.	\\
				 \end{cases}
$$
which is induced from the cycle class map sending~$1 \in \Lambda$ to the fundamental class~$s_{Z/X} \in H^{2c}_Z(X,\Lambda(c))$.
Since the \'etale site of a scheme is equivalent to the \'etale site of its reduction, the statement also holds if only~$X_{\mathit{red}}$ and~$Z_{\mathit{red}}$ are regular.
We call~$(X,Z)$ a \emph{weakly regular pair} if $(X_{\mathit{red}},Z_{\mathit{red}})$ is a regular pair.
Taking into account that the pullback of the fundamental class~$s_{Z_{\mathit{red}}/X_{\mathit{red}}}$ under a morphism $(X',Z') \to (X,Z)$ of weakly regular pairs of codimension~$c$
 is $e \cdot s_{Z'_{\mathit{red}}/X'_{\mathit{red}}}$, where~$e$ denotes the ramification index, we obtain the following compatibility of purity isomorphisms:

\begin{proposition} \label{purityfunctoriality}
 Let~$f:(X',Z') \to (X,Z)$ be a morphism of weakly regular pairs of codimension~$c$.
 Suppose that~$Z$ and~$Z'$ are irreducible and as cycles on~$X'_{\mathit{red}}$ we have~$f_{\mathit{red}}^*Z_{\mathit{red}} = e \cdot Z'_{\mathit{red}}$ with a positive integer~$e$
  (the ramification index).
 Then, for any~$m \in \mN$ invertible on~$X$ the following diagram commutes
 \begin{center}
  \begin{tikzcd}
   H^n_Z(X,\mZ/m\mZ)		\ar[dd]	& H^{n-2c}(Z,\mZ/m\mZ(-c))	\ar[l,"\mathit{purity}","\sim"']		\ar[d]	\\
					& H^{n-2c}(Z',\mZ/m\mZ(-c))	\ar[d,"\cdot e"]					\\
   H^n_{Z'}(X',\mZ/m\mZ)		& H^{n-2c}(Z',\mZ/m\mZ(-c)).	\ar[l,"\mathit{purity}","\sim"']
  \end{tikzcd}
 \end{center}
\end{proposition}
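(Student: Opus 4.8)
The plan is to reduce the statement to a formal consequence of the pullback formula for fundamental classes recalled just before the proposition. First I would pass to reduced schemes: since the \'etale site of a scheme coincides with that of its reduction, replacing $X,Z,X',Z'$ by $X_{\mathit{red}},Z_{\mathit{red}},X'_{\mathit{red}},Z'_{\mathit{red}}$ leaves all four cohomology groups in the diagram, both vertical maps, and both purity isomorphisms unchanged. So I may assume $(X,Z)$ and $(X',Z')$ are regular pairs of codimension~$c$ and that $f^*Z=e\cdot Z'$ as cycles on~$X'$.

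Next I would unwind the purity isomorphism. By Gabber's theorem the fundamental class $s_{Z/X}\in H^{2c}_Z(X,\Lambda(c))$ induces the identification $\underline{H}^{2c}_Z(\Lambda)\cong\Lambda_Z(-c)$, and the resulting isomorphism
\[
 H^{n-2c}(Z,\Lambda(-c))\overset{\sim}{\to}H^n_Z(X,\Lambda)
\]
is cup product with $s_{Z/X}$, using the natural structure of $H^\bullet_Z(X,-)$ as a module over $H^\bullet(Z,-)$ (restrict along $i\colon Z\hookrightarrow X$ and cup; equivalently, the projection formula for $i^{!}$). The twists are consistent since $\Lambda(-c)\otimes\Lambda(c)\cong\Lambda$.

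The square defining $f$ as a morphism of pairs is Cartesian, so $f^{-1}(Z)=Z'$ and $f$ induces a pullback $f^*\colon H^\bullet_Z(X,-)\to H^\bullet_{Z'}(X',-)$, which is the left vertical arrow of the diagram; the unlabelled arrow $H^{n-2c}(Z,\Lambda(-c))\to H^{n-2c}(Z',\Lambda(-c))$ in the right column is the corresponding restriction $f|_Z^*$. By the naturality of cup products under pullback (\ie functoriality of the localization triangle), for $\alpha\in H^{n-2c}(Z,\Lambda(-c))$ we get $f^*(\alpha\cup s_{Z/X})=f|_Z^*\alpha\cup f^*s_{Z/X}$. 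Now I invoke the pullback formula recalled above the statement, $f^*s_{Z/X}=e\cdot s_{Z'/X'}$ --- this is the step where the hypotheses that $Z,Z'$ are irreducible and $f^*Z=e\,Z'$ are used, through the compatibility of Gabber's cycle class with pullback up to the ramification index. Combining the two displays yields $f^*(\alpha\cup s_{Z/X})=e\,(f|_Z^*\alpha\cup s_{Z'/X'})$; reading the left-hand side through the top purity isomorphism and the right-hand side through the bottom one is precisely the commutativity of the diagram.

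I expect no genuine obstacle here: the substantial input, $f^*s_{Z/X}=e\cdot s_{Z'/X'}$, has already been recorded, and the rest is bookkeeping. The only point requiring care is to make sure the source one quotes for absolute purity normalizes ``the'' purity isomorphism as cup product with $s_{Z/X}$ (rather than, say, its inverse or a sign/twist variant), so that the two purity maps in the diagram are instances of the same construction and the factor~$e$ lands where claimed.
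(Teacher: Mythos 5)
Your argument is correct and matches the paper's own (implicit) proof: the paper deduces the proposition directly from the fact that the pullback of the fundamental class $s_{Z_{\mathit{red}}/X_{\mathit{red}}}$ is $e\cdot s_{Z'_{\mathit{red}}/X'_{\mathit{red}}}$, which is exactly the key step you spell out via reduction to reduced schemes and naturality of cup product with the fundamental class. Your extra care about the normalization of the purity isomorphism is reasonable but does not change the substance.
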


\begin{corollary} \label{zeromap}
 Let~$X$ be a noetherian, regular scheme and~$f:X' \to X$ a tamely ramified covering such that the branch locus~$D \subseteq X$ is regular.
 Let~$Z$ be a regular closed subscheme of~$D$ and let~$Z'$ denote its preimage in~$X'$.
 Then, for any integer~$m$ dividing the ramification index of each irreducible component of~$Z'$, the canonical map
 \begin{equation*}
  H^n_Z(X,\mZ/m\mZ) \to H^n_{Z'}(X',\mZ/m\mZ)
 \end{equation*}
 is the zero map for all~$n \in \mN$.
\end{corollary}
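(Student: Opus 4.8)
The strategy is to reduce Corollary \ref{zeromap} to the commutative diagram of Proposition \ref{purityfunctoriality} by decomposing both $Z$ and $Z'$ into irreducible components. First I would observe that, since $Z$ is regular, it is a disjoint union of its irreducible components, and likewise $Z' = f^{-1}(Z)$ decomposes; because cohomology with supports turns disjoint unions of closed subschemes into direct sums, it suffices to treat a single irreducible component $Z_0$ of $Z$ together with its preimage. Fixing such a $Z_0$, its preimage in $X'$ is a disjoint union of irreducible components $Z_0' = \coprod_k Z_{0,k}'$, each mapping to $Z_0$ with some ramification index $e_k$. So the map in question is the direct sum, over $k$, of the maps $H^n_{Z_0}(X,\Lambda) \to H^n_{Z_{0,k}'}(X',\Lambda)$ with $\Lambda = \mZ/m\mZ$, and it is enough to show each of these vanishes.

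Next I would invoke absolute cohomological purity. Since $X$ is regular, $D$ is regular, and $Z_0 \subseteq D$ is a regular closed subscheme, the pair $(X, Z_0)$ is a regular pair of some codimension $c$. Because $f: X' \to X$ is a tamely ramified covering branched along the regular divisor $D$, the generalized Abhyankar lemma (\cite{SGA1}, Exp.~XIII, 5.3.0) — exactly as used in the proof of Lemma \ref{preimages} — guarantees that $X'$ is regular in a neighborhood of $Z_{0,k}'$ and that $Z_{0,k}'$ is regular; moreover $(X', Z_{0,k}')$ is again a regular pair of codimension $c$, and as a cycle $f^* Z_0 = \sum_k e_k Z_{0,k}'$ near $D$. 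Thus the hypotheses of Proposition \ref{purityfunctoriality} are met for the morphism of regular pairs $(X', Z_{0,k}') \to (X, Z_0)$ with ramification index $e_k$.

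Now the diagram of Proposition \ref{purityfunctoriality} identifies the restriction map $H^n_{Z_0}(X,\Lambda) \to H^n_{Z_{0,k}'}(X',\Lambda)$, via the two horizontal purity isomorphisms, with the composite $H^{n-2c}(Z_0, \Lambda(-c)) \to H^{n-2c}(Z_{0,k}', \Lambda(-c)) \xrightarrow{\cdot e_k} H^{n-2c}(Z_{0,k}', \Lambda(-c))$, i.e. with multiplication by $e_k$ preceded by ordinary pullback. By hypothesis $m \mid e_k$ for every component $Z_{0,k}'$ of $Z'$, so multiplication by $e_k$ is the zero endomorphism of the $\Lambda$-module $H^{n-2c}(Z_{0,k}', \Lambda(-c))$. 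Hence the composite is zero, and therefore so is the original map $H^n_{Z_0}(X,\Lambda) \to H^n_{Z_{0,k}'}(X',\Lambda)$. Summing over $k$ and then over the components of $Z$ gives that $H^n_Z(X,\Lambda) \to H^n_{Z'}(X',\Lambda)$ is zero for all $n$, as claimed.

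**Expected main obstacle.** The routine algebra is trivial once the setup is in place; the only genuine point requiring care is the verification that $(X', Z_{0,k}')$ is a regular pair — that is, that $X'$ is regular along $Z_{0,k}'$ and $Z_{0,k}'$ is itself regular — and that the pullback cycle identity $f^*Z_0 = \sum_k e_k Z_{0,k}'$ holds, so that Proposition \ref{purityfunctoriality} genuinely applies. This is where tameness of the covering and regularity of the branch locus $D$ are essential, via Abhyankar's lemma; without regularity of $D$ the preimage need not be regular and the purity isomorphism on the $X'$ side would be unavailable. One should also make sure the codimension $c$ is preserved under $f$, which again follows because $f$ is finite and flat in a neighborhood of $D$ (tameness).
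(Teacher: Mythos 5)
Your proposal is correct and takes essentially the same approach as the paper: decompose $Z'$ into its (pairwise disjoint) irreducible components, apply Proposition~\ref{purityfunctoriality} componentwise, and conclude because multiplication by the ramification index $e_k$ is zero on $\mZ/m\mZ$-modules when $m \mid e_k$. The one place the paper is more careful is that it applies Proposition~\ref{purityfunctoriality} to $X' - \bigcup_{i \neq k} Z'_i \to X$ rather than to $(X',Z'_{0,k}) \to (X,Z_0)$ directly, since with the other components present the square is not Cartesian and the cycle identity $f_{\red}^*Z_{\red} = e \cdot Z'_{\red}$ fails ($f^*Z_0 = \sum_k e_k Z'_{0,k}$); this is a harmless imprecision in your write-up, repaired by excision using exactly the disjointness of components you already established.
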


\begin{proof}
 Without loss of generality we may assume that~$Z$ is integral.
 The scheme $X'$ and the underlying reduced subscheme of~$Z'$ are regular because the branch locus~$D$ is regular.
 Denote by~$Z'_k$ for $k=1,\ldots ,r$ the irreducible components of~$Z'$.
 For each~$k$ we can now apply Proposition\nobreakspace \ref {purityfunctoriality} to the morphism
 \begin{equation*}
  X'- \bigcup_{i \neq k} Z'_i \to X
 \end{equation*}
 to conclude that
 \begin{equation*}
  H^n_Z(X,\mZ/m\mZ) \to H^n_{Z'_k}(X'- \bigcup_{i \neq k} Z'_i,\mZ/m\mZ)
 \end{equation*}
 is the zero map.
 But
 \begin{equation*}
  H^n_{Z'}(X',\mZ/m\mZ) = \bigoplus_k H^n_{Z'_k}(X'- \bigcup_{i \neq k} Z'_i,\mZ/m\mZ),
 \end{equation*}
 and the corollary follows.
\end{proof}

Using Gabber's absolute purity theorem we can prove the following refined version of the proper base change theorem.

\begin{proposition} \label{basechange}
 Let~$(X,Z)$ be a weakly regular pair of codimension~$c$ and set~$U = X-Z$.
 Let~$\pi : X \to Y$ be a proper morphism such that~$Z$ intersects~$X_y$ transversally for any closed point~$y$ of~$Y$.
 Set~$\Lambda = \mZ/m\mZ$ for an integer~$m$ prime to the residue characteristics of~$X$.
 Then for any geometric point $\bar{y} \to Y$ and any integer~$d$ the base change morphisms
 \begin{equation*}
  (R^n (\pi_U)_* \Lambda(d))_{\bar{y}} \to H^n(U_{\bar{y}},\Lambda(d))
 \end{equation*}
 are isomorphisms for any~$n \geq 0$.
\end{proposition}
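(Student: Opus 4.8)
The plan is to combine the ordinary proper base change theorem with Gabber's absolute cohomological purity to replace the open part $U_{\bar y}$ by the pair $(X_{\bar y},Z_{\bar y})$, where proper base change does apply directly. First I would consider the relative excision distinguished triangle for the pair $(X,Z)$ and the morphism $\pi_U = \pi|_U$ versus $\pi$ itself: applying $R\pi_*$ to
\[
 i_*Ri^!\Lambda(d) \to \Lambda(d) \to Rj_*j^*\Lambda(d) \to i_*Ri^!\Lambda(d)[1]
\]
and taking stalks at $\bar y$ gives a long exact sequence relating $(R^n_Z\pi_*\Lambda(d))_{\bar y}$, $(R^n\pi_*\Lambda(d))_{\bar y}$ and $(R^n(\pi_U)_*\Lambda(d))_{\bar y}$. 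For the last term one must note that $R\pi_* Rj_* j^*\Lambda(d) = R(\pi_U)_*\Lambda(d)$ since $\pi_U = \pi\circ j$. Now $X \to Y$ is proper, so ordinary proper base change gives $(R^n\pi_*\Lambda(d))_{\bar y} \cong H^n(X_{\bar y},\Lambda(d))$. For the cohomology-with-support term, absolute purity identifies $i_*Ri^!\Lambda(d)$ with $i_*\Lambda_Z(d-c)[-2c]$ (using that $(X_{\mathit{red}},Z_{\mathit{red}})$ is a regular pair of codimension $c$); pushing forward along the proper morphism $Z \to Y$ and applying proper base change again yields $(R^n_Z\pi_*\Lambda(d))_{\bar y} \cong H^{n-2c}(Z_{\bar y},\Lambda(d-c))$.

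So the stalk sequence reads, for every $n$,
\[
 \ldots \to H^{n-2c}(Z_{\bar y},\Lambda(d-c)) \to H^n(X_{\bar y},\Lambda(d)) \to (R^n(\pi_U)_*\Lambda(d))_{\bar y} \to H^{n-2c+1}(Z_{\bar y},\Lambda(d-c)) \to \ldots
\]
On the other hand, the same excision triangle on the fiber $X_{\bar y}$ with closed subscheme $Z_{\bar y}$, together with absolute purity applied to the pair $(X_{\bar y},Z_{\bar y})$ — which is a weakly regular pair of codimension $c$ precisely because $Z$ meets $X_{\bar y}$ transversally — gives the analogous sequence
\[
 \ldots \to H^{n-2c}(Z_{\bar y},\Lambda(d-c)) \to H^n(X_{\bar y},\Lambda(d)) \to H^n(U_{\bar y},\Lambda(d)) \to H^{n-2c+1}(Z_{\bar y},\Lambda(d-c)) \to \ldots
\]
The base change morphism $(R^n(\pi_U)_*\Lambda(d))_{\bar y} \to H^n(U_{\bar y},\Lambda(d))$ fits into a morphism of these two long exact sequences, being the identity on the $H^n(X_{\bar y})$ and $H^{n-2c}(Z_{\bar y})$ terms. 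The key compatibility I must check is that the connecting maps (the boundary maps built from the purity isomorphisms and the excision triangles) agree on both sides — this is where one invokes the functoriality of the purity isomorphism, i.e.\ the content of Proposition~\ref{purityfunctoriality} applied to the Cartesian square $(X_{\bar y},Z_{\bar y}) \to (X,Z)$, whose ramification index is $1$ by the transversality hypothesis. Granting this, the five lemma forces the middle vertical map to be an isomorphism for all $n$.

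The main obstacle is precisely this compatibility of the two excision sequences: one has to verify that the purity isomorphism commutes with the formation of $R\pi_*$ and with specialization to a fiber, i.e.\ that the fundamental class $s_{Z/X}$ restricts to $s_{Z_{\bar y}/X_{\bar y}}$ under the transversal base change (ramification index $1$). This is a compatibility of cycle class maps with pullback along Cartesian squares; it is standard but needs to be spelled out, and it is the reason the transversality hypothesis is essential — without it $Z_{\bar y}$ need not be regular of the right codimension and the purity isomorphism on the fiber would fail. Everything else (proper base change, reduction to $X_{\mathit{red}}$, the five lemma) is routine once this is in place.
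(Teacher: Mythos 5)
Your proposal is correct and follows essentially the same route as the paper: proper base change for $X\to Y$ and $Z\to Y$, absolute purity for the pairs $(X,Z)$ and $(X_{\bar y},Z_{\bar y})$ (the latter enabled by transversality, with the compatibility supplied by Proposition~\ref{purityfunctoriality} with ramification index $e=1$), and the five lemma applied to the two excision sequences. The only difference is packaging: the paper first reduces to $Y$ strictly henselian local and $d=0$ and compares the absolute excision sequences of $(X,Z)$ and $(X_y,Z_y)$, whereas you phrase the same comparison via stalks of the relative excision triangle for $R\pi_*$.
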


\begin{proof}
 Without loss of generality we may assume~$Y$ is the spectrum of a strictly henselian local ring with closed point~$y$.
 Then,~$\mu_m \cong \mZ/m\mZ$ on~$X$ and it suffices to prove the lemma for~$d=0$.
 We need to show that
 \begin{equation*}
  H^n(U,\Lambda) \to H^n(U_y,\Lambda)
 \end{equation*}
 is an isomorphism.
 Consider the following diagram of excision sequences
 \begin{center}
 \begin{tikzcd}
  \ldots	\ar[r]	& H^n_Z(X,\Lambda)		\ar[d]	\ar[r]	& H^n(X,\Lambda)	\ar[d]	\ar[r]	& H^n(U,\Lambda)	\ar[d]	\ar[r]	& \ldots .	\\
  \ldots	\ar[r]	& H^n_{Z_y}(X_y,\Lambda)		\ar[r]	& H^n(X_y,\Lambda)		\ar[r]	& H^n(U_y,\Lambda)		\ar[r]	& \ldots
 \end{tikzcd}
 \end{center}
The homomorphisms~$H^n(X,\Lambda) \to H^n(X_y,\Lambda)$ are bijective due to proper base change.
Since by assumption~$Z$ intersects~$X_y$ transversally, $(X_y,Z_y) \to (X,Z)$ is a morphism of weakly regular pairs of codimension~$c$ yielding a commutative diagram
 \begin{center}
 \begin{tikzcd}
  H^n_Z(X,\Lambda)		\ar[r,"\sim"]	\ar[d]	& H^{n-2c}(Z,\Lambda(-c))	\ar[d]	\\
  H^n_{Z_y}(X_y,\Lambda)	\ar[r,"\sim"]		& H^{n-2c}(Z_y,\Lambda(-c)).
 \end{tikzcd}
 \end{center}
The horizontal maps are purity isomorphisms and the vertical map on the right is an isomorphism by proper base change.
Hence, the vertical map on the left is an isomorphism and the lemma follows by applying the five lemma to the above diagram of exact sequences.
\end{proof}

Finally, we prove the following technical result which is a variant of (and follows from) the compatibility of the purity isomorphisms
 associated with subschemes $Y \subseteq Z \subseteq X$ such that $(X,Y)$, $(X,Z)$ and $(Z,Y)$ are weakly regular pairs:

\begin{proposition} \label{commutative}
 Let~$X/B$ be an arithmetic surface and~$D \subset X$ an snc-divisor.
 Let~$S \subset D$ be a set of closed points containing the set~$D_{\mathit{sing}}$ of singular points of~$D$.
 Denote by~$D_N$ the normalization of~$D$ and set~$S_N = S \times_D D_N$.
 Then the following diagram of cohomology groups with coefficients in~$\Lambda = \mZ/m\mZ$ ($m$ prime to the residue characteristics of~$X$) commutes
 \begin{center}
  \begin{tikzcd}
   H^3_{D-S}(X-S,\Lambda)	\ar[r]			\ar[rr,bend left=20,"\delta"]					& H^3(X-S,\Lambda)		\ar[r,"\delta"]	& H^4_S(X,\Lambda)						\\
   H^1(D-S,\Lambda(-1))		\ar[u,"purity","\sim"']	\ar[d,dash,shift left=.5]	\ar[d,dash,shift right=.5]	& 						& H^0(S,\Lambda(-2))	\ar[u,"purity","\sim"']			\\
   H^1(D_N-S_N,\Lambda(-1))	\ar[r,"\delta"]										& H^2_{S_N}(D_N,\Lambda(-1))			& H^0(S_N,\Lambda(-2)).	\ar[l,"purity","\sim"']	\ar[u,"norm"]
  \end{tikzcd}
 \end{center}
 All maps~$\delta$ are connecting homomorphisms of excision sequences.
\end{proposition}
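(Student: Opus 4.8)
The plan is to reduce the statement, via the normalization, to the well-understood case of a regular chain of closed subschemes. First I would replace $D$ by $D_{\mathit{red}}$: the étale site of $D$ depends only on $D_{\mathit{red}}$, the latter is again an snc-divisor with the same singular locus, and $D_N$, $S_N$ are formed from $D_{\mathit{red}}$ by definition, so every group and map in the diagram is unaffected. Next, since $S$ and $S_N$ are finite sets of closed points one has $H^4_S(X,\Lambda)=\bigoplus_{s\in S}H^4_{\{s\}}(X,\Lambda)$, and likewise for $H^2_{S_N}(D_N,\Lambda(-1))$ and the two $H^0$-groups, while $H^n_{\{s\}}(X,-)$ depends only on a henselian (or Zariski) neighbourhood of $s$. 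Using Proposition~\ref{purityfunctoriality} for the localization maps (with ramification index $1$), together with the compatibility of the connecting maps $\delta$ with restriction to opens, one sees that both composites $H^1(D-S,\Lambda(-1))\to H^4_{\{s\}}(X,\Lambda)$ running around the outer boundary of the diagram factor through the restriction of the given class to a small neighbourhood of $s$. It therefore suffices to fix $s\in S$, shrink $X$ to such a neighbourhood, and check commutativity componentwise at $s$.

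I would then split into two cases according to the local structure of $D$ at $s$. If $D$ is regular at $s$, then $D_N\to D$ and $S_N\to S$ are isomorphisms near $s$, the norm map is the identity, and $\{s\}\subseteq D\subseteq X$ is a chain for which $(X,D)$, $(X,\{s\})$ and $(D,\{s\})$ are weakly regular pairs; in this case the commutativity follows directly from the compatibility of the purity isomorphisms along this chain with the boundary maps of the two localization triangles — equivalently from the transitivity $g_{\{s\}/X}=g_{D/X}\circ g_{\{s\}/D}$ of Gysin maps and the fact that Gysin maps form a morphism of localization triangles — which is exactly the statement this proposition refines. If $D$ is singular at $s$ then, $X$ being two-dimensional and $D$ reduced and snc, exactly two irreducible components $D_1,D_2$ pass through $s$, each regular and meeting the other transversally at $s$. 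Locally at $s$ one then has $D-\{s\}=(D_1-\{s\})\sqcup(D_2-\{s\})$ and $D_N=D_1\sqcup D_2$, while $S_N\to S$ is the split double cover, so the norm map $H^0(S_N,\Lambda(-2))\to H^0(S,\Lambda(-2))$ is addition. Every group in the diagram except $H^4_{\{s\}}(X,\Lambda)$ splits as the direct sum of the corresponding groups for $D_1$ and for $D_2$, and the connecting homomorphisms and the purity isomorphisms respect these splittings; on the $i$-th summand the diagram becomes the diagram of the regular case for the chain $\{s\}\subseteq D_i\subseteq X$, already treated. The addition in the norm map is exactly what is needed to make the undecomposed corner $H^4_{\{s\}}(X,\Lambda)$ receive the sum of the two branch contributions, and commutativity follows.

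The main obstacle is the regular-chain input used in the first case: one needs the statement that the purity (Gysin) isomorphisms for a chain $\{s\}\subseteq D\subseteq X$ of weakly regular pairs are transitive and compatible with the boundary maps of the relevant localization triangles, with all Tate twists kept straight. Granting that, the rest is organisational — the two reductions above and the branch-by-branch decomposition in the nodal case — the only genuinely delicate point being the identification of $H^0(S_N,\Lambda(-2))\to H^0(S,\Lambda(-2))$ with the fold map and its compatibility with the single group $H^4_{\{s\}}(X,\Lambda)$ at the corner of the diagram.
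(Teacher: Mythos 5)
Your organisational reductions are sound and in the end differ only mildly from the paper's: you localise at the points of $S$ and split into branches at a node, whereas the paper decomposes $H^3_{D-S}(X-S,\Lambda)=\bigoplus_i H^3_{D_i-S}(X-S,\Lambda)$ along the irreducible components of $D$ (which for an snc divisor on a surface are regular, so $D_N=\coprod_i D_i$ and the two reductions amount to essentially the same thing). Your identification of the norm map with the fold map at a node, and the observation that the two branch contributions add up in the single corner $H^4_{\{s\}}(X,\Lambda)$, are also correct.

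The problem is the step you yourself flag and then ``grant'': the commutativity for a regular chain $\{s\}\subseteq D_i\subseteq X$, i.e.\ that the purity isomorphism for $(X,D_i)$ intertwines the excision boundary maps for $(D_i,\{s\})$ and $(X,\{s\})$, together with the transitivity $\mathrm{purity}_{(X,\{s\})}=\mathrm{purity}_{(X,D_i)}\circ\mathrm{purity}_{(D_i,\{s\})}$ with the twists kept straight. This is not an auxiliary input one may assume -- after your reductions it \emph{is} the proposition, and the paper's proof consists precisely of establishing it: the purity maps are rewritten as cup products with the fundamental classes $s_{D/X}$ and $s_{D-S/X-S}$, one uses that the restriction $H^2_D(X,\Lambda(1))\to H^2_{D-S}(X-S,\Lambda(1))$ is an isomorphism carrying $s_{D/X}$ to $s_{D-S/X-S}$, the boundary maps are compared via the compatible spectral sequences $H^i_S(D,\underline{H}^j_D(X,\Lambda))\Rightarrow H^{i+j}_S(X,\Lambda)$ and $H^i(D-S,\underline{H}^j_{D-S}(X-S,\Lambda))\Rightarrow H^{i+j}_{D-S}(X-S,\Lambda)$, and the transitivity of the purity isomorphisms is taken from \cite{Fuj00}, Proposition~1.2.1. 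Of the two facts you grant, only the transitivity is a clean quotable statement (that same reference); for ``Gysin maps form a morphism of localization triangles'' in the generality needed here (weakly regular pairs, torsion coefficients, Tate twists) you would have to either supply a precise derived-category reference (purity as $Ri^!\Lambda\cong\Lambda(-1)[-2]$ together with transitivity of $Ri^!$) or carry out an argument like the one just sketched. As written, your proposal defers exactly the content that has to be proved, so it does not yet constitute a proof.
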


\begin{proof}
 Denote by~$D_i$ for $i=1,\ldots r$ the irreducible components of~$D$.
 Since
 \begin{equation*}
  H^3_{D-S}(X-S,\Lambda) = \bigoplus_i H^3_{D_i - S}(X - S,\Lambda),
 \end{equation*}
 it suffices to prove the proposition for each component~$D_i$ separately.
 We may thus assume without loss of generality that~$D$ is a regular irreducible curve.
 In this case the above diagram reduces to
 \begin{center}
  \begin{tikzcd}
   H^3_{D-S}(X-S,\Lambda)	\ar[r]			\ar[rr,bend left=20,"\delta"]	& H^3(X-S,\Lambda)		\ar[r,"\delta"]	& H^4_S(X,\Lambda)							\\
   H^1(D-S,\Lambda(-1))		\ar[u,"purity","\sim"']	\ar[r,"\delta"]			& H^2_{S}(D,\Lambda(-1))			& H^0(S,\Lambda(-2)).	\ar[u,"purity","\sim"']	\ar[l,"purity","\sim"']
  \end{tikzcd}
 \end{center}
Consider the commutative diagram
\begin{center}
 \begin{tikzcd}[column sep=large]
  H^3_{D-S}(X-S,\Lambda)					\ar[r,"\delta"]								& H^4_S(X,\Lambda)									\\
  H^1(D-S,\underline{H}^2_{D-S}(X-S,\Lambda))			\ar[u,"\sim"]				\ar[r,"\delta"]	\ar[d,"\sim"']	& H^2_S(D,\underline{H}^2_D(X,\Lambda))			\ar[u,"\sim"]	\ar[d,"\sim"']	\\
  H^1(D-S,\Lambda(-1)) \otimes H^2_{D-S}(X-S,\Lambda(1))	\ar[r,"\delta \otimes res^{-1}"]					& H^2_S(D,\Lambda(-1)) \otimes H^2_D(X,\Lambda(1))					\\
  H^1(D-S,\Lambda(-1))						\ar[u,"\sim","\otimes s_{D-S/X-S}"']	\ar[r,"\delta"]			& H^2_S(D,\Lambda(-1)).					\ar[u,"\sim","\otimes s_{D/X}"']
 \end{tikzcd}	
\end{center}
 The restriction
 \begin{equation*}
  res : H^2_D(X,\Lambda(1)) \to H^2_{D-S}(X-S,\Lambda(1))
 \end{equation*}
 is an isomorphism which maps the fundamental class~$s_{\nicefrac{D}{X}}$ to~$s_{\nicefrac{D-S}{X-S}}$.
 For this reason, the homomorphism~$\delta \otimes res^{-1}$ in the third line of the diagram is well defined and the lowermost square commutes.
 Commutativity of the middle square follows because~$\underline{H}_D(X)$ is a free sheaf which restricts to~$\underline{H}_{D-S}(X-S)$ on~$D-S$.
 The upper square commutes due to compatibility of the spectral sequences
 \begin{IEEEeqnarray*}{rCl}
  H^i_S(D,\underline{H}^j_D(X,\Lambda))       & \Rightarrow & H^{i+j}_S(X,\Lambda),  \\
  H^i(D-S,\underline{H}^j_{D-S}(X-S,\Lambda)) & \Rightarrow & H^{i+j}_{D-S}(X-S,\Lambda).
 \end{IEEEeqnarray*}
 Furthermore, by~\cite{Fuj00}, Proposition~1.2.1 the following diagram commutes
 \begin{center}
  \begin{tikzcd}[column sep=large]
   H^4_S(X,\Lambda)												& 											\\
   H^2_S(D,\underline{H}^2_D(X,\Lambda))		\ar[u,"\sim"]				\ar[d,"\sim"']	& 											\\
   H^2_S(D,\Lambda(-1)) \otimes H^2_D(X,\Lambda(1))								& 											\\
   H^2_S(D,\Lambda(-1))					\ar[u,"\sim","\otimes s_{D/X}"']			& H^0(S,\Lambda(-2)).	\ar[l,"\sim"',"\text{purity}"]	\ar[uuul,"\sim","\text{purity}"']
  \end{tikzcd}
 \end{center}
Putting the two diagrams together, the assertion of the proposition follows.
\end{proof}

\section{Killing cohomology with support} \label{killingcohomologywithsupport}

In the setup of Section\nobreakspace \ref {setupandnotation} we derive conditions for hypothesis~(1) in Proposition\nobreakspace \ref {firstreductions} to hold.
The idea is to kill cohomology classes with support in~$D$ by $\mc$-coverings of $(X,D)$ which are sufficiently ramified along~$D$ (compare Corollary\nobreakspace \ref {zeromap}).
More precisely we need the following notion:

\begin{definition} \label{enoughtamecoverings}
 Let~$Y$ be an arithmetic surface and $Z \subseteq Y$ an effective Weil divisor.
 We say that~$(Y,Z)$ has \emph{enough tame coverings} at a closed point~$y$ of~$Z$ if for every irreducible component~$C$ of~$Z$ passing through~$y$
  there is~$f \in K(Y)^{\times}$ with support in~$Z$ such that~$\deg_C(f) > 0$ and~$\deg_P(f) = 0$ for any other prime divisor~$P$ passing through~$y$.
 We say that~$(Y,Z)$ has enough tame coverings if it has enough tame coverings at every closed point of~$Z$.
\end{definition}

If~$(Y,Z)$ has enough tame coverings at a point~$y$ and~$C$ is an irreducible component of~$Z$ passing through~$y$,
 we can construct $\mc$-coverings of~$(Y,Z)$ of arbitrarily high ramification index in~$C$ by taking the normalization of~$Y$ in a function field extension $K(Y)(\sqrt[d]{f})|K(Y)$
 with~$f$ chosen as in Definition\nobreakspace \ref {enoughtamecoverings}.
In a neighborhood of~$y$ this covering ramifies only in~$C$.

Unfortunately, in order to treat condition~(1) in Proposition\nobreakspace \ref {firstreductions} we cannot directly apply Corollary\nobreakspace \ref {zeromap} as~$D$ is not necessarily regular.
Instead we proceed in two steps using the excision sequence
\[
 \ldots \to H^i_S(X,\Lambda) \to H^i_D(X,\Lambda) \to H^i_{D-S}(X-S,\Lambda) \to \ldots
\]
and applying Corollary\nobreakspace \ref {zeromap} to $(X-S,D-S)$ and $(X,S)$.
In case $i=3$ the argument is a bit subtle and we need to understand the kernel of the map $H^3_{Z-T}(X-T,\Lambda) \to H^4_T(X,\Lambda)$
 for a vertical divisor $Z \subset X$ and a finite set of closed points~$T$ containing the singular points of~$Z$.
By Proposition\nobreakspace \ref {commutative} the purity isomorphisms translate this map to a map $H^1(Z-T,\Lambda(-1)) \to H^1(S,\Lambda(-2))$ which is defined entirely in terms of curves.
The following lemma describes its kernel.

\begin{lemma} \label{homology}
 Let~$C$ be a projective curve over an algebraically closed field~$k$ with only ordinary double points and let~$\Gamma_C$ denote its dual graph.
 Let~$T \subset C$ be a finite set of closed points containing the set~$C_{\mathit{sing}}$ of singular points of~$C$.
 Define~$C_N := \coprod_i C_i$, where~$C_i$ are the normalizations of the irreducible components of~$C$.
 Set~$T_N= T \times_C C_N$.
 For~$m \in \mN$ prime to the characteristic of~$k$ consider the homomorphism~$\beta$ of cohomology groups with coefficients in~$\mZ/m\mZ$ defined by
 \begin{center}
  \begin{tikzcd}
   H^1(C_N-T_N)	\ar[r,"\alpha"]		& H^2_{T_N}(C_N)	& H^0(T_N)(-1)	\ar[l,"\sim"',"\text{purity}"]	\ar[r,"\text{norm}"']	& H^0(T)(-1),	\\
   H^1(C-T) 	\ar[u,dash,shift left=.5]	\ar[u,dash,shift right=.5]	\ar[urrr, bend right=8,"\beta"]	
  \end{tikzcd}
 \end{center}
 where~$\alpha$ is the connecting homomorphism of the excision sequence associated with~$(C_N,T_N)$.
 Then
 \begin{equation*}
  \frac{\ker(\beta)}{\ker(\alpha)} \cong H_1(\Gamma_C,\mZ/m\mZ),
 \end{equation*}
 where~$H_1(\Gamma_C,\mZ/m\mZ)$ denotes singular homology with coefficients in~$\mZ/m\mZ$.
\end{lemma}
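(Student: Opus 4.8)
The plan is to make $\beta$ completely explicit by combining the localization (excision) sequence of the regular pair $(C_N,T_N)$ with Gabber's absolute purity isomorphism, and then to recognize the resulting system of linear equations as the $1$-cycle condition on the dual graph $\Gamma_C$. Throughout one drops $\mZ/m\mZ$ from the notation, keeping only Tate twists; all maps are $\mZ/m\mZ$-linear, and it is harmless to assume $C$ connected, since each group in sight and $H_1(\Gamma_C,\mZ/m\mZ)$ alike decompose as direct sums over the connected components of $C$. First I would note that, because $T$ contains every singular point of $C$, the normalization restricts to an isomorphism $C_N-T_N\xrightarrow{\ \sim\ }C-T$ (this is exactly the vertical identification in the diagram), so one may work with $C_N-T_N$ throughout. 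The relevant part of the localization sequence of the codimension-one regular pair $(C_N,T_N)$ reads
\[
 H^1(C_N)\longrightarrow H^1(C_N-T_N)\xrightarrow{\ \alpha\ }H^2_{T_N}(C_N)\xrightarrow{\ j\ }H^2(C_N),
\]
with $H^1_{T_N}(C_N)=0$ by purity; hence $\ker\alpha\cong H^1(C_N)$ and $\alpha$ induces an isomorphism $H^1(C_N-T_N)/\ker\alpha\xrightarrow{\ \sim\ }\ker j$. Since $\beta=\operatorname{norm}\circ\operatorname{purity}^{-1}\circ\alpha$ factors through $\alpha$, we have $\ker\alpha\subseteq\ker\beta$, and this isomorphism identifies $\ker\beta/\ker\alpha$ with
\[
 \ker j\ \cap\ \ker\!\big(\operatorname{norm}\circ\operatorname{purity}^{-1}\big)\ \subseteq\ H^2_{T_N}(C_N).
\]

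Next I would make the maps $j$, $\operatorname{purity}$, $\operatorname{norm}$ concrete. Writing $C_N=\coprod_iC_i$ with each $C_i$ a smooth projective connected curve over $k$, absolute purity identifies $H^2_{T_N}(C_N)$ with $\bigoplus_{t'\in T_N}\mZ/m\mZ(-1)$, the element $1$ at a point $t'$ going to the cycle class of $t'$; under this identification $j\colon H^2_{T_N}(C_N)\to H^2(C_N)\cong\bigoplus_i\mZ/m\mZ(-1)$ sends $(a_{t'})_{t'}$ to $\big(\sum_{t'\in T_N\cap C_i}a_{t'}\big)_i$, because on each $C_i$ the cycle class of a closed point generates $H^2(C_i)$. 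The finite morphism $T_N\to T$ has one reduced point over each smooth point of $C$ and the two branch points over each node, so its trace $\operatorname{norm}\colon H^0(T_N)(-1)\to H^0(T)(-1)$ sends $(a_{t'})_{t'}$ to $\big(\sum_{t'\mapsto t}a_{t'}\big)_{t\in T}$. This identifies $\ker\beta/\ker\alpha$ with the group of tuples $(a_{t'})_{t'\in T_N}$ for which $\sum_{t'\in T_N\cap C_i}a_{t'}=0$ for every component $C_i$ (the $j$-condition) and $\sum_{t'\mapsto t}a_{t'}=0$ for every $t\in T$ (the $\operatorname{norm}$-condition).

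Finally I would match this group with $H_1(\Gamma_C,\mZ/m\mZ)$: the vertices of $\Gamma_C$ are the components $C_i$ and its edges are the nodes of $C$, and one fixes for each node $p$ an ordering $p^{(0)},p^{(1)}$ of the two points of $T_N$ over it, $p^{(0)}$ lying on the component $v_0(p)$ and $p^{(1)}$ on $v_1(p)$ (with $v_0(p)=v_1(p)$ exactly when $p$ is a self-node, i.e.\ a loop of $\Gamma_C$). For a tuple $(a_{t'})$ as above the $\operatorname{norm}$-conditions over the smooth points of $C$ force $a_{t'}=0$ for every $t'$ over such a point, and over a node $p$ they read $a_{p^{(0)}}+a_{p^{(1)}}=0$; setting $c_p:=a_{p^{(1)}}$ then recovers the whole tuple from $(c_p)_p$, and the $j$-condition at $C_i$ (which only involves branch points) becomes $\sum_{p\,:\,v_1(p)=C_i}c_p-\sum_{p\,:\,v_0(p)=C_i}c_p=0$, i.e.\ the vanishing of the $C_i$-coordinate of $\partial\big(\sum_pc_p\,[p]\big)$, where $\partial\colon C_1(\Gamma_C,\mZ/m\mZ)\to C_0(\Gamma_C,\mZ/m\mZ)$ is the boundary map of the chain complex of $\Gamma_C$ (a loop $p$ contributing $c_p-c_p=0$). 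Thus $(c_p)_p\mapsto(a_{t'})_{t'}$ will be a $\mZ/m\mZ$-linear bijection from $\ker\partial$ onto $\ker\beta/\ker\alpha$, and since $\Gamma_C$ carries no $2$-cells, $\ker\partial=H_1(\Gamma_C,\mZ/m\mZ)$.

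The step I expect to demand the most care is the explicit description of $j$ and $\operatorname{norm}$ through the purity isomorphism, together with the bookkeeping of which point of $T_N$ sits on which component of $C_N$ and in which fibre over $T$; in particular one must check the degenerate features of the dictionary — a component of $C$ meeting no point of $T$ is an isolated vertex of $\Gamma_C$ and contributes nothing to either side, while self-nodes correspond to loops — so that the correspondence with the dual graph comes out exactly right.
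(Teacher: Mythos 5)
Your proposal is correct and follows essentially the same route as the paper's proof: both identify $\ker(\beta)/\ker(\alpha)$ with $\ker\bigl(H^2_{T_N}(C_N)\to H^2(C_N)\bigr)\cap\ker\bigl(H^2_{T_N}(C_N)\to H^0(T)(-1)\bigr)$, make both maps explicit through the purity identification $H^2_{T_N}(C_N)\cong\bigoplus_{t'\in T_N}\mZ/m\mZ(-1)$, and then match the resulting linear conditions with the cellular chain complex of $\Gamma_C$ (vertices the components $C_i$, edges the nodes). Your additional checks of the degenerate cases (smooth points of $T$, isolated vertices, self-nodes as loops) are consistent with the paper's computation and fill in bookkeeping the paper leaves implicit.
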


\begin{proof}
 The group~$H_1(\Gamma_C,\mZ/m\mZ)$ can be calculated using a cellular chain complex.
 The zero-skeleton~$(\Gamma_C)_0$ consists of the nodes of the graph which correspond to the irreducible components~$C_i$ and the one-skeleton~$(\Gamma_C)_1$ is all of~$\Gamma_C$.
 Thus, the one-cells are the edges of the graph, which correspond to the points in~$C_{\mathit{sing}}$.
 We give each edge~$s$ a direction by choosing an initial node~$C_1(s)$ and an end node~$C_2(s)$.
 Then~$H_1(\Gamma_C,\mZ/m\mZ)$ is the first homology of the sequence
 \begin{equation*}
  0 \to H_1((\Gamma_C)_1,(\Gamma_C)_0,\mZ/m\mZ) \overset{d}{\to} H_0((\Gamma_C)_0,\mZ/m\mZ) \to 0
 \end{equation*}
 and the map~$d$ can be identified with
 \begin{IEEEeqnarray*}{rCl}
  \bigoplus_{s \in C_{\mathit{sing}}} \mZ/m\mZ \cdot s & \to & \bigoplus_i \mZ/m\mZ \cdot C_i.  \\
  s                                           & \mapsto     & C_2(s) - C_1(s).
 \end{IEEEeqnarray*}
 Let us now compute~$\ker(\beta)/\ker(\alpha)$.
 \begin{IEEEeqnarray*}{rCl}
  \frac{\ker(\beta)}{\ker(\alpha)} & = & \ker\left(\frac{H^1(C-T)}{\ker(\alpha)} \to H^0(T)(-1)\right)  \\
                                 & = & \ker(Im(\alpha) \to H^0(T)(-1))  \\
                                 & = & \ker(\ker(H^2_{T_N}(C_N) \to H^2(C_N)) \to H^0(T)(-1))  \\
                                 & = & \ker(H^2_{T_N}(C_N) \to H^2(C_N)) \cap \ker(H^2_{T_N}(C_N) \to H^0(T)(-1)).
 \end{IEEEeqnarray*}
 We identify the map~$H^2_{T_N}(C_N) \to H^2(C_N)$ with
 \begin{IEEEeqnarray*}{rCl}
  \bigoplus_{s_N\in T_N} \mZ/m\mZ \cdot s_N & \to & \bigoplus_i \mZ/m\mZ \cdot C_i,  \\
  s_N                                     & \mapsto     & C(s_N)
 \end{IEEEeqnarray*}
 where~$C(s_N)$ is the component of~$C_N$ which contains~$s_N$ and $H^2_{T_N}(C_N) \to H^0(T)(-1)$ with
 \begin{IEEEeqnarray}{rCl} \label{normmap}
  \bigoplus_{s_N\in T_N} \mZ/m\mZ \cdot s_N & \to & \bigoplus_{s\in T} \mZ/m\mZ \cdot s,  \\
  s_N                                     & \to & s(s_N)
 \end{IEEEeqnarray}
 where~$s(s_N)$ is the image of~$s_N$ in~$T$.
 In particular, we obtain an isomorphism
 \begin{IEEEeqnarray*}{rCl}
  \bigoplus_{s\in C_{\mathit{sing}}} \mZ/n\mZ \cdot s & \to & \ker(\bigoplus_{s_N\in T_N} \mZ/n\mZ \cdot s_N \to \bigoplus_{s\in T} \mZ/n\mZ \cdot s), \\
  s                                           & \mapsto     & (s_N)_2(s) - (s_N)_1(s)
 \end{IEEEeqnarray*}
 where~$(s_N)_i(s) \in C_i(s)$ are the two preimages of~$s$ in $C_N$.
 Therefore,~$\ker(\beta)/\ker(\alpha)$ is isomorphic to the kernel of the composition
 \begin{equation*}
  \bigoplus_{s\in C_{\mathit{sing}}} \mZ/m\mZ \cdot s \to \bigoplus_{s_N\in T_N} \mZ/m\mZ \cdot s_N \to \bigoplus_i \mZ/m\mZ \cdot C_i,
 \end{equation*}
 which maps~$s\in C_{\mathit{sing}}$ to~$C_2(s) - C_1(s)$.
 Comparing with the calculation of $H_1(\Gamma_C,\mZ/m\mZ)$ at the beginning of the proof we see that
 \begin{equation*}
  \frac{\ker(\beta)}{\ker(\alpha)} \cong H_1(\Gamma_C,\mZ/m\mZ).
 \end{equation*}
\end{proof}

In the following we call a (not necessarily integral) projective curve~$C$ over a field~$k$ a rational tree if $H^1(C,\mO_C) = 0$.
By flat base change~$C$ is rational if and only if its base change~$\bar{C}$ to the separable closure~$\bar{k}$ of~$k$ is rational.
This is the case precisely if every irreducible component of~$\bar{C}$ is isomorphic to $\mP^1_{\bar{k}}$ and moreover the dual graph of~$\bar{C}$ is a tree, \ie simply connected (see~\cite{Deb}, Definition~4.23).

\begin{lemma} \label{rationalsupport}
 Let~$Z \leq D_v$ be a subdivisor whose connected components are rational trees.
  Suppose that for every geometric point~$\bar{b}$ above a closed point~$b \in B$ the natural map
 \begin{equation*}
  \pi_1(b,\bar{b})(\mc) \to \pi_1(B,\bar{b})(\mc)
 \end{equation*}
 is injective.
 Then
 \begin{equation*}
  \varinjlim_{\mathfrak{I}_B} H^3_{S'}(X',\Lambda) \to \varinjlim_{\mathfrak{I}_B} H^3_{Z'\cup S'}(X',\Lambda)
 \end{equation*}
 is surjective
 (Remember that~$S'$ is the set of special points of~$\bar{D}'$.)
\end{lemma}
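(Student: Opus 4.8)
The plan is to combine relative excision, the purity comparison of Proposition~\ref{commutative}, and the homological computation of Lemma~\ref{homology}: excision reduces the statement to the vanishing of a filtered colimit of kernels, Proposition~\ref{commutative} rewrites such a kernel as a combinatorial one on curves, and Lemma~\ref{homology} makes it vanish because the dual graph of a rational tree is a tree.

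First I would note that for $B'$ in $\mathfrak I_B$ one has $\bar X'=\bar X\times_BB'$ (the Stein factorization of $\bar X\times_BB'\to B$ being $\bar X\times_BB'\to B'\to B$), hence $X'=X\times_BB'$, $Z'=Z\times_BB'$ and $S'=S\times_BB'$; in particular $Z'$ is a vertical snc divisor, $X'$ is regular along $Z'$ (because $\bar X$ is regular along the tidy divisor $\bar D$), and $S'\cap Z'$ contains the singular locus of $Z'$. As $X'$ is regular at the codimension-$2$ points of $S'$, absolute purity gives $H^3_{S'}(X',\Lambda)=0$ and $H^3_{S'\cap Z'}(X',\Lambda)=0$, so the points of $S'$ off $Z'$ contribute nothing, and the relative excision sequence for $S'\cap Z'\subseteq Z'\subseteq X'$ yields
\[
 H^3_{Z'\cup S'}(X',\Lambda)\ \cong\ \ker\big(\delta\colon H^3_{Z'-S'}(X'-S',\Lambda)\to H^4_{S'}(X',\Lambda)\big).
\]
Since filtered colimits are exact and $\varinjlim_{\mathfrak I_B}H^3_{S'}(X',\Lambda)=0$, the map in the statement is surjective if and only if $\varinjlim_{\mathfrak I_B}\ker\delta=0$.

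Next I would apply Proposition~\ref{commutative} to $(X',Z',S'\cap Z')$: through the purity isomorphisms it identifies $\delta$ with a composite $\beta'\colon H^1(Z'-S',\Lambda(-1))=H^1(Z'_N-S'_N,\Lambda(-1))\to H^0(S'\cap Z',\Lambda(-2))$ built from the excision connecting map for $(Z'_N,S'_N)$, the purity isomorphism on the finite scheme $S'_N$, and the norm map, where $Z'_N\to Z'$ is the normalization and $S'_N=(S'\cap Z')\times_{Z'}Z'_N$. Passing to the colimit over $\mathfrak I_B$ is where the hypotheses enter: by the injectivity of $\pi_1(b,\bar b)(\mc)\to\pi_1(B,\bar b)(\mc)$ at each closed point $b$, this colimit replaces $k(b)$ by its maximal pro-$\mc$ extension $\tilde k(b)$, whose absolute Galois group is strongly $\mc$-good, so that a conic over $\tilde k(b)$ has the étale cohomology of $\mP^1$ over an algebraically closed field (for coefficients $\Lambda(d)$), with the cycle class of a closed point being multiplication by its degree. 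In particular every component of $Z'_N$ is a conic (geometrically a line, by the rational-tree hypothesis), hence $\varinjlim_{\mathfrak I_B}H^1(Z'_N,\Lambda(-1))=0$ and the connecting map is injective on the colimit; and the computation in the proof of Lemma~\ref{homology} carries over to give $\varinjlim_{\mathfrak I_B}\ker\beta'\cong H_1(\Gamma,\Lambda)$, where $\Gamma$ is the dual graph of $Z$ base-changed to the fields $\tilde k(b)$.

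Finally, since the connected components of $Z$ are rational trees, the geometric dual graph of each such component is a tree, and the dual graph $\Gamma$ over $\tilde k(b)$—a quotient of that tree by a Galois action—is itself a tree once the finitely many non-split nodes are discarded (these occur only when $2\notin\mN(\mc)$ and contribute trivially to $\ker\beta'$). Hence $H_1(\Gamma,\Lambda)=0$, so $\varinjlim_{\mathfrak I_B}\ker\beta'=0$ and $\varinjlim_{\mathfrak I_B}\ker\delta=0$, which is the assertion. I expect the crux to be the translation to the algebraically closed setting: Lemma~\ref{homology} is stated over an algebraically closed base field, so one must check carefully that the colimit over $\mathfrak I_B$—using the injectivity hypothesis to recover the maximal pro-$\mc$ extensions of the residue fields, and strong $\mc$-goodness to make conics over them behave like $\mP^1$—genuinely reduces to that case, including the bookkeeping for the cycle-class and norm maps and for the dual graph of a rational tree under base change.
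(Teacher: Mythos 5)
Your first two reductions coincide with the paper's: excision for $S'\subseteq Z'\cup S'\subseteq X'$ (purity in codimension~$2$ indeed gives $H^3_{S'}(X',\Lambda)=0$) reduces the lemma to the vanishing, in the colimit over $\mathfrak{I}_B$, of $\ker\big(H^3_{Z'-S'}(X'-S',\Lambda)\to H^4_{S'}(X',\Lambda)\big)$, and Proposition~\ref{commutative} identifies this map with the twist $\beta(-1)$ of the map of Lemma~\ref{homology}. The gap is in your final step. Lemma~\ref{homology} is proved only over an algebraically closed field, and your assertion that its computation ``carries over'' to the maximal pro-$\mc$ extension of $k(b)$, with $\ker\beta'$ computed by $H_1$ of a dual graph over that field, is not justified and is not correct as stated: over that field the components of $Z_N$ need not be geometrically irreducible, the nodes and the points of $S$ can have residue degree $>1$ divisible by arbitrary primes --- not only $2$, and possibly by $\ell$ itself, since the maximal pro-$\mc$ extension can still admit non-$\mc$ extensions of degree divisible by $\ell$ --- and then the cycle-class and norm maps entering $\beta$ acquire degree factors which may vanish modulo $\ell$. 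So the combinatorial model of Lemma~\ref{homology} genuinely changes, and the parenthetical claim that non-split nodes ``occur only when $2\notin\mN(\mc)$ and contribute trivially to $\ker\beta'$'' is unsubstantiated. This is exactly the point the paper handles by a different device: it stays over $k(b)$, compares the sequence $0\to H^1(Z_N)\to H^1(Z-S)\xrightarrow{\ \beta\ }H^0(S)(-1)$ with its geometric counterpart over $\overline{k(b)}$ via the Hochschild--Serre spectral sequences, applies Lemma~\ref{homology} only over $\overline{k(b)}$ (where the dual graph of $\bar Z$ is a tree and $H^1(\bar Z_N)=0$, so $\ker\bar\beta=0$), deduces by a diagram chase that $\ker\beta\cong H^1(k(b),\Lambda)^d$, and kills this group in the colimit using strong $\mc$-goodness together with the injectivity of $\pi_1(b,\bar b)(\mc)\to\pi_1(B,\bar b)(\mc)$.

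Your colimit-first idea can be repaired without re-proving Lemma~\ref{homology} over a non-closed field, and then it becomes a shorter variant of the paper's argument: once the $\pi_1$-injectivity identifies the colimit with the situation over the maximal pro-$\mc$ extension of $k(b)$ (the same use of the hypothesis as in the paper), strong $\mc$-goodness of $G_{k(b)}$ gives $H^i$-vanishing in degrees $i\ge 1$ for the relevant finite $\ell$-primary Galois modules; hence $H^1(Z-S,\Lambda(-1))$ computed over that field injects into $H^1(\bar Z-\bar S,\Lambda(-1))$, compatibly with $\beta$ and $\bar\beta$ and with the norm and purity maps, so the colimit of $\ker\beta$ injects into $\ker\bar\beta$, which is zero by Lemma~\ref{homology} over $\overline{k(b)}$ and the rationality of the components of $\bar Z_N$. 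That injection is the argument missing from your write-up; as it stands, the step from ``conics behave like $\mP^1$'' to ``$\varinjlim\ker\beta'\cong H_1(\Gamma,\Lambda)=0$'' is a genuine gap.
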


\begin{proof}
 Using the excision sequence for $S' \subseteq Z' \cup S' \subseteq X'$ we see that the required surjectivity is equivalent to the injectivity of
 \[
  \varinjlim_{\mathfrak{I}_B} H^3_{Z'- S'}(X'-S',\Lambda) \to \varinjlim_{\mathfrak{I}_B} H^4_{S'}(X',\Lambda).
 \]
 In other words, for $B' \to B$ in $\mathfrak{I}_B$ and $\varphi \in H^3_{Z'- S'}(X'-S',\Lambda)$
  we have to construct $B'' \to B'$ in $\mathfrak{I}_B$ such that~$\varphi$ maps to zero in $H^4_{S''}(X'',\Lambda)$.
 As the assumptions are stable under \'etale base change, we may assume $B' = B$.
  By Proposition\nobreakspace \ref {commutative} we have the following commutative diagram
 \begin{center}
  \begin{tikzcd}
  H^3_{Z-S}(X-S,\Lambda)	\ar[r]					& H^4_{S}(X,\Lambda)	\ar[d]	\\
  H^1(Z-S,\Lambda(-1))	\ar[r,"\beta(-1)"]	\ar[u,"\sim"]	& H^0(S,\Lambda(-2)),
  \end{tikzcd}
 \end{center}
 where~$\beta(-1)$ is the~$(-1)$-twist of the map~$\beta$ defined in Lemma\nobreakspace \ref {homology}.
 It thus suffices to show that the kernel of~$\beta$ vanishes in the limit over~$\mathfrak{I}_B$.
 Without loss of generality we may assume that~$Z$ is connected.
 In particular, it is contained in a single closed fiber of~$X \to B$ over some point~$b \in B$ with residue field~$k(b)$.
 Let~$\overline{k(b)}$ be an algebraic closure of~$k(b)$ and denote by~$\bar{Z}$ and~$\bar{S}$ the base change of~$Z$ and~$S$, respectively, to~$\overline{k(b)}$.
 Moreover, write~$Z_N$ for the normalization of~$Z$ and~$\bar{Z}_N$ for its base change to~$\overline{k(b)}$.
 Consider the diagram of cohomology groups with coefficients in~$\Lambda$
 \begin{center}
 \begin{tikzcd}
		& H^2(k(b))^d			\ar[r,"="]		& H^2(k(b))^d								&				\\
  0	\ar[r]	& H^1(\bar{Z}_N)^{G_{k(b)}}	\ar[r]	\ar[u]		& H^1(\bar{Z}-\bar{S})^{G_{k(b)}}	\ar[r,"\bar{\beta}"]	\ar[u]	& H^0(\bar{S})(-1)^{G_{k(b)}}	\\
  0	\ar[r]	& H^1(Z_N)			\ar[r]		\ar[u]	& H^1(Z-S)				\ar[r,"\beta"]		\ar[u]	& H^0(S)(-1),			\ar[u]	\\
		& H^1(k(b))^d			\ar[r,"="]	\ar[u]	& H^1(k(b))^d							\ar[u]	& 						\\
		& 0						\ar[u]	& 0								\ar[u]	&
 \end{tikzcd}
 \end{center}
 where~$d$ is the number of components of~$Z_N$.
 The vertical sequences are induced by the Hochschild-Serre spectral sequences
 \begin{IEEEeqnarray*}{rCl}
  H^i(k(b),H^j(\bar{Z}_N),\Lambda))			&\Rightarrow& H^{i+j}(Z_N,\Lambda),	\\
  H^i(k(b),H^j(\bar{Z}_N-\bar{S}_N,\Lambda))		&\Rightarrow& H^{i+j}(Z_N-S_N,\Lambda).
 \end{IEEEeqnarray*}
 The upper horizontal sequence is exact by the following reason:
 According to Lemma\nobreakspace \ref {homology}, the first homology group~$H_1(\Gamma_Z,\Lambda)$ of the dual graph~$\Gamma_Z$ of~$\bar{Z}$
  is isomorphic to~$\ker(\bar{\beta})/\ker(\bar{\alpha})$, where~$\bar{\alpha}$ denotes the connecting homomorphism of the excision sequence associated with~$\bar{S}_N \hookrightarrow \bar{Z}_N$.
 As~$Z$ is a rational tree,~$\Gamma_Z$ is simply connected, and thus its first homology group vanishes.
 It follows that the kernel of~$\bar{\beta}$ equals the image of the map
 \begin{equation*}
  \gamma :H^1(\bar{Z}_N,\Lambda) \hookrightarrow H^1(\bar{Z}_N-\bar{S}_N,\Lambda) = H^1(\bar{Z}-\bar{S},\Lambda).
 \end{equation*}
 Taking~$G_{k(b)}$-invariants we obtain the upper sequence of the above diagram, which is therefore exact.
 A diagram chase now shows the exactness of the lower horizontal sequence.

 Again by the rationality assumption on~$Z$, the cohomology group~$H^1(\bar{Z}_N)$ vanishes.
 The above diagram shows that the kernel of~$\beta$ equals~$H^1(k(b))^d$.
 By the assumption on fundamental groups it vanishes in the limit over~$\mathfrak{I}_B$.
 \end{proof}

\begin{proposition} \label{etalecovering}
 Suppose that the following conditions are satisfied:
 \begin{enumerate}[label=(\roman*)]
  \item $(\bar{X},\bar{D})$ has enough tame coverings.
  \item Every connected component of~$D$ has at least one horizontal component.
  \item For every geometric point~$\bar{b}$ above a closed point~$b \in B$ the natural map
        \begin{equation*}
         \pi_1(b,\bar{b})(\mc) \to \pi_1(B,\bar{b})(\mc)
        \end{equation*}
	is injective.
 \end{enumerate}
 Then, for any $n \geq 3$
 $$
 \varinjlim_{\mathfrak{I}_{(\bar{X},\bar{D})}} H^n_{D'}(X',\Lambda) = 0.
 $$
\end{proposition}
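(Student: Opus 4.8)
The plan is to strip the support~$D'$ off from the inside using excision, identify each graded piece by absolute purity, and then make each piece die after passing to a cofinal family of desingularized $\mc$-coverings; since filtered colimits of abelian groups are exact, this suffices.

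First I would fix~$\ell \in \mN(\mc)$, put~$\Lambda = \mZ/\ell\mZ$, and use the excision sequence for the closed subschemes~$S' \subseteq D' \subseteq X'$,
\[
 \ldots \to H^n_{S'}(X',\Lambda) \to H^n_{D'}(X',\Lambda) \to H^n_{D'-S'}(X'-S',\Lambda) \overset{\delta}{\to} H^{n+1}_{S'}(X',\Lambda) \to \ldots .
\]
Since a desingularized $\mc$-covering is regular along~$D'$, hence along~$S'$, the pair~$(X',S')$ is a weakly regular pair of codimension~$2$ and~$(X'-S',D'-S')$ is a weakly regular pair of codimension~$1$. Absolute cohomological purity (Section\nobreakspace\ref{absolutecohomologicalpurity}), together with~$\mu_\ell \cong \mZ/\ell\mZ$ on~$B$, then gives $H^n_{S'}(X',\Lambda) \cong \bigoplus_{s\in S'} H^{n-4}(\Spec k(s),\Lambda)$ (this vanishes for~$n < 4$, equals~$\bigoplus_{s\in S'}\Lambda$ for~$n=4$, and vanishes for~$n \geq 6$ as the residue fields~$k(s)$ have cohomological dimension~$\leq 1$) and $H^n_{D'-S'}(X'-S',\Lambda) \cong H^{n-2}(D'-S',\Lambda)$. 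As~$D'-S'$ is a curve over a base of small cohomological dimension, the excision sequence already forces~$\varinjlim_{\mathfrak{I}_{(\bar X,\bar D)}} H^n_{D'}(X',\Lambda)=0$ for~$n \geq 6$; the task is therefore to kill in the colimit the point classes~$H^4_{S'}(X',\Lambda)$ and~$H^5_{S'}(X',\Lambda)$ and the classes~$H^{n-2}(D'-S',\Lambda)$ for~$n = 3,4,5$.

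Next I would dispose of everything that can be reached by ramification. By Corollary\nobreakspace\ref{zeromap}, a class supported on a regular component~$C$ of the branch locus of a tame covering of~$X'$ dies once the ramification index along~$C$ becomes divisible by~$\ell$. Hypothesis~(i) provides enough tame coverings of~$(\bar X,\bar D)$, and — via the logarithmic Abhyankar lemma, exactly as in the proof of Lemma\nobreakspace\ref{preimages} — this persists near the horizontal part of~$\bar D$ for every desingularized $\mc$-covering, and likewise along the vertical components of~$D'$ descending to~$\bar D$. Hence, cofinally in~$\mathfrak{I}_{(\bar X,\bar D)}$, I can produce further coverings with prescribed ($\ell$-divisible) ramification along any such component and, applying Corollary\nobreakspace\ref{zeromap} component by component as in its proof, kill the corresponding summands of~$\varinjlim H^n_{D'-S'}(X'-S',\Lambda)$. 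Applying the same to a component through a special point~$s$ (taking~$Z = \{s\}$ in Corollary\nobreakspace\ref{zeromap}) kills the class at~$s$ in~$\varinjlim H^4_{S'}$ and~$\varinjlim H^5_{S'}$. Hypothesis~(ii) is what makes this work: because every connected component of~$D'$ carries a horizontal component, there is no purely vertical connected piece, and no special point, isolated from the locus along which we may freely ramify.

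What remains is the exceptional, purely rational, tree-like part of~$D'_v$ together with the combinatorial contribution of the dual graph, which cannot be reached directly. This I would handle with Lemma\nobreakspace\ref{rationalsupport}: writing~$Z' \leq D'_v$ for the maximal vertical subdivisor whose connected components are rational trees, hypothesis~(iii) and Lemma\nobreakspace\ref{rationalsupport} give that in the colimit over~$\mathfrak{I}_B$ the classes supported on~$Z' \cup S'$ all come from classes supported on~$S'$, i.e. that~$\varinjlim H^3_{Z'-S'}(X'-S',\Lambda) \to \varinjlim H^4_{S'}(X',\Lambda)$ is injective. Combined with Proposition\nobreakspace\ref{commutative}, which identifies~$\delta$ with the combinatorial map~$\beta$ of Lemma\nobreakspace\ref{homology} — whose kernel modulo~$\ker\alpha$ is the first homology of the dual graph, and whose~$\ker\alpha$ is the image of the~$H^1$ of the normalizations of the components (equal to~$H^1(k(b))^{\oplus d}$ for the rational fibral components, killed in the colimit by hypothesis~(iii), and killed by passing to further coverings for the remaining vertical and horizontal components) — this yields~$\varinjlim\ker(\delta)=0$ and~$\varinjlim\coker(\delta)=0$. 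Feeding all of this back into the excision sequence in the colimit gives~$\varinjlim H^3_{D'}(X',\Lambda)=0$ and, once the point classes and~$\varinjlim H^{n-2}(D'-S',\Lambda)$ have been killed as above, $\varinjlim H^4_{D'}(X',\Lambda)=\varinjlim H^5_{D'}(X',\Lambda)=0$. Assembling the three steps proves the proposition.

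The main obstacle is the tangle, in degrees~$n=3,4,5$, between the vertical part of~$D'$ and the point classes~$H^\bullet_{S'}$: one must show simultaneously that~$\delta$ becomes injective (no residual dual-graph homology and no residual~$H^1$ of the component normalizations) and surjective (every intersection-point class is hit) in the colimit. Purity (Proposition\nobreakspace\ref{commutative}) reduces this to curves and their dual graphs, but pushing it through requires Lemma\nobreakspace\ref{rationalsupport} for the rational-tree part, hypothesis~(iii) to kill the~$H^1$ of rational fibral components over \'etale base coverings, hypothesis~(ii) to anchor every connected component of~$D'$ to a ramifiable horizontal component, and hypothesis~(i) to actually perform the ramifications — all while checking at each stage that the covering produced again lies in~$\mathfrak{I}_{(\bar X,\bar D)}$ and that Stein factorization and the exceptional divisor behave compatibly.
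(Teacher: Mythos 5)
Your overall skeleton (excision along $S'\subseteq D'\subseteq X'$, purity, killing classes by coverings ramified along the boundary via Corollary~\ref{zeromap} and enough tame coverings, Lemma~\ref{rationalsupport} for the rational-tree part, and point classes killed at the end) is the paper's approach. But there is a genuine gap in degrees $n\geq 4$. Your two tools for the part of $\varinjlim H^{n-2}(D'-S',\Lambda(-1))$ supported on exceptional components are (a) ramification, which you yourself exclude there (you only claim persistence of enough tame coverings, via Abhyankar as in Lemma~\ref{preimages}, along components descending to $\bar D$, and you say the exceptional part ``cannot be reached directly''), and (b) Lemma~\ref{rationalsupport}, which is a statement purely about $H^3_{S'}\to H^3_{Z'\cup S'}$, i.e.\ about degree $3$ (it reduces by purity to $H^1$ of curves and $H^0$ of points and to the dual graph); it says nothing about $H^2(E'-S')$ or $H^3(E'-S')$, which are generally nonzero since the components of $E'-S'$ are $\mathbb{G}_m$-like. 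So classes in $H^4_{D'}$ and $H^5_{D'}$ restricting to the exceptional locus are not killed by anything you deploy. The paper closes exactly this hole differently: for $n\geq 4$ it lifts the class to $H^n_{D'_v}$ and then shows $H^{n-2}(Z_i-S,\Lambda)$ vanishes in the colimit over $\mathfrak{I}_B$ alone, via Hochschild--Serre, the fact that $(Z_i-S)_{\bar b_i}$ is an \emph{affine} curve of cohomological dimension $\leq 1$ (this is where hypothesis~(ii) enters, guaranteeing every vertical component meets $S$), and the vanishing of $H^{r\geq 1}(k(b_i),-)$ in the limit by strong $\mc$-goodness together with hypothesis~(iii). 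That base-change vanishing argument, or alternatively the full strength of Proposition~\ref{enoughtamecoveringsstable} (which lets one ramify along exceptional components as well, but whose proof needs the multiplicity-matrix analysis of Section~\ref{arithmeticsurfaceswithenoughtamecoverings}, not just Abhyankar), is what your proposal is missing.

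A secondary problem is your cap at $n\leq 5$: you justify vanishing for $n\geq 6$ by asserting the residue fields $k(s)$ have cohomological dimension $\leq 1$. In the setup of Section~\ref{setupandnotation} the residue fields of $B$ are only assumed to have strongly $\mc$-good absolute Galois groups, and the paper's own example of such a group is the Galois group of a $p$-adic-like field of cohomological dimension $2$, so this premise is unjustified; the paper needs no such bound because its final step (Corollary~\ref{zeromap} applied at a special point $s$, using a covering ramified along one component $D_s$ through $s$ and unramified along the other, so that the desingularization is an isomorphism near the preimages of $s$) kills $H^n_S$ in every degree $n$. Your point-killing step would do the same if you ran it in all degrees, but as written the restriction to $n\leq 5$ rests on a false claim, and the degrees you do treat ($n=4,5$) still have the exceptional-fiber gap described above.
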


\begin{proof}
 Let~$(\bar{X}',\bar{D}')$ be an object of $\mathfrak{I}_{(\bar{X},\bar{D})}$ and~$\varphi$ an element of $H^n_{D'}(X',\Lambda)$.
 We have to show that there is a desingularized $\mc$-covering $(\bar{X}'',\bar{D}'') \to (\bar{X}',\bar{D}')$
  such that the image of~$\varphi$ in $H^n_{D''}(X'',\Lambda)$ is zero.
 We will see in Proposition\nobreakspace \ref {enoughtamecoveringsstable} that the property of having enough tame coverings is stable under desingularized tame coverings.
 Moreover, it is easy to check that this is true for the remaining assumptions as well.
 Hence, we may assume $(\bar{X}',\bar{D}') = (\bar{X},\bar{D})$.
 We first construct $(\bar{X}',\bar{D}')$ in $\mathfrak{I}_{(\bar{X},\bar{D})}$ such that the image of~$\varphi$ in $H^n_{D'}(X',\Lambda)$ lifts to $H^n_{S'}(X',\Lambda)$.
 
 Let us treat the case~$n=3$.
 Since~$(\bar{X},\bar{D})$ has enough tame coverings, there is a desingularized~$\mc$-covering
 $$
 (X',D') \to (X_1,D_1) \to (X,D),
 $$
 such that~$m$ divides the ramification index of each irreducible component of~$D_1$.
 We have the following commutative diagram of excision sequences with coefficients~$\Lambda$:
 \begin{center}
  \begin{tikzcd}
   H^3_S(X)		\ar[r]	\ar[d]	& H^3_D(X)	\ar[r]	\ar[d]	& H^3_{D-S}(X-S)			\ar[d]	\\
   H^3_{S'\cup E'}(X')	\ar[r]		& H^3_{D'}(X')	\ar[r]		& H^3_{D'-S'\cup E'}(X'-S'\cup E').
  \end{tikzcd}
 \end{center}
 Let~$\varphi'$ denote the image of~$\varphi$ in~$H^3_{D'}(X',\Lambda)$.
 Applying Corollary\nobreakspace \ref {zeromap} to $X' - S'\cup E' \to X - S$, we conclude that the rightmost vertical map is the zero map.
 Consequently,~$\varphi'$ is mapped to zero in~$H^3_{D'-S'\cup E'}(X'-S'\cup E',\Lambda)$.
 Hence, there is~$\varphi'_1 \in H^3_{S'\cup E'}(X',\Lambda)$ mapping to~$\varphi'$.
 In Proposition\nobreakspace \ref {simplyconnected} we will see that the exceptional fibers of a desingularized $\mc$-covering are always rational trees.
 Therefore, we can apply Lemma\nobreakspace \ref {rationalsupport} with~$Z=E'$ to obtain a finite \'etale~$\mc$-covering~$B'' \to B'$
  and thus via base change a finite \'etale~$\mc$-covering~$\bar{X}'' \to \bar{X}'$
   such that the image of~$\varphi'_1$ in~$H^3_{S''\cup E''}(X'',\Lambda)$ lifts to an element $\varphi''_2 \in H^3_{S''}(X'',\Lambda)$.
 Changing notation we may assume that~$\varphi$ lifts to~$\varphi_2 \in H^3_S(X,\Lambda)$.

 Now assume that~$n \geq 4$.
 By the same argument as for~$n=3$ there is a desingularized~$\mc$-covering~$(\bar{X}',\bar{D}') \to (\bar{X},\bar{D})$
  such that the image of~$\varphi$ in~$H^n_{D'}(X',\Lambda)$ lifts to~$H^n_{S'\cup E'}(X',\Lambda)$.
 In particular, it lifts to~$H^n_{D'_v}(X',\Lambda)$ and thus we may assume that~$\varphi$ lifts to~$H^n_{D_v}(X,\Lambda)$ right away.
 
 Consider the excision sequence
 $$
 \ldots \to H^n_S(X,\Lambda) \to H^n_{D_v}(X,\Lambda) \to H^n_{D_v-S}(X-S,\Lambda) \to \ldots.
 $$
 By purity we have
 \begin{equation*}
  H^n_{D_v-S}(X-S,\Lambda) \cong H^{n-2}(D_v-S,\Lambda(-1)).
 \end{equation*}
 For each component~$Z_i$ of~$D_v$ lying over a closed point~$b_i \in B$ with geometric point~$\bar{b}_i$ consider the Hochschild-Serre spectral sequence
 \begin{equation*}
  H^r(b_i,H^s((Z_i - S)_{\bar{b}_i},\Lambda)) \Rightarrow H^{r+s}(Z_i - S,\Lambda).
 \end{equation*}
 Since $(Z_i - S)_{\bar{b}_i}$ is an affine curve over an algebraically closed field, its cohomological dimension is less or equal to one.
 Moreover, for $r \ge 1$, $H^r(b_i,H^s((Z_i - S)_{\bar{b}_i},\Lambda))$ vanishes
  in the limit over~$\mathfrak{I}_B$ as the absolute Galois group of $k(b_i)$ is $\mc$-good and $\pi_1(b,\bar{b})(\mc) \to \pi_1(B,\bar{b})(\mc)$ is injective.
 We conclude that $H^{n-2}(Z_i - S,\Lambda)$ vanishes in the limit over~$\mathfrak{I}_B$ for~$n \geq 4$.
 As before we replace~$\bar{X}$ by~$\bar{X}'$ and may assume that~$\varphi_1$ maps to~$0$ in~$H^n_{D_v-S}(X-S,\Lambda)$.
 Hence,~$\varphi_1$ lifts to~$\varphi_2 \in H^n_S(X,\Lambda)$.
 
 Having lifted~$\varphi$ to~$\varphi_2 \in H^n_S(X,\Lambda)$ for any $n \geq 2$ we now construct $(\bar{X}',\bar{D}')$ in~$\mathfrak{I}_{(\bar{X},\bar{D})}$
  such that~$\varphi_2$ maps to zero in $H^n_{D'}(X',\Lambda)$.
 The cohomology group $H^n_S(X,\Lambda)$ is the direct sum of all $H^n_s(X,\Lambda)$ for the finitely many points $s \in S$.
 For~$s \in S$ choose an irreducible component~$D_s$ of~$D$ passing through~$s$.
 Since~$(\bar{X},\bar{D})$ has enough tame coverings, we find a desingularized~$\mc$-covering $(\bar{X}',\bar{D}') \to (\bar{X}_1,\bar{D}_1) \to (\bar{X},\bar{D})$
  such that~$m$ divides the ramification indices of all irreducible components of~$\bar{D}_1$ lying over~$D_s$ and is unramified in all other prime divisors passing through~$s$.
 Since the branch locus is regular in a neighborhood of~$s$, the pair~$(\bar{X}_1,\bar{D}_1)$ is regular at all preimage points~$s'_1,\ldots,s'_r$ of~$s$.
 Hence, $\bar{X}' \to \bar{X}_1$ is an isomorphism in a neighborhood of~$s'_1,\ldots,s'_r$.
 Therefore, by Corollary\nobreakspace \ref {zeromap}, the homomorphism
 $$
 H^n_s(X,\Lambda) \to \bigoplus_i H^n_{s'_i}(X',\Lambda)
 $$
 is the zero map.
 Take a desingularized~$\mc$-covering~$(\bar{X}'',\bar{D}'') \to (\bar{X},\bar{D})$ dominating the coverings~$(\bar{X}',\bar{D}') \to (\bar{X},\bar{D})$ constructed for each~$s \in S$.

 We obtain a commutative diagram
  \begin{center}
   \begin{tikzcd}
    H^n_S(X,\Lambda)			\ar[r]	\ar[d]	& H^n_D(X,\Lambda)		\ar[d]	\\
    H^n_{S''\cup E''}(X'',\Lambda)	\ar[r]		& H^n_{D''}(X'',\Lambda),	
   \end{tikzcd}
 \end{center}
 where the left vertical homomorphism is the zero map.
 This implies the assertion.
\end{proof}

\section{Killing the Cohomology of higher direct images} \label{killingthecohomologyofhigherdirectimages}

In this section we still keep the notation of Section\nobreakspace \ref {setupandnotation} and examine condition~(2) of Proposition\nobreakspace \ref {firstreductions}, i.e. we strive to kill the cokernel of
\[
 H^0(B,R^2_D \pi_* \Lambda) \to H^0(B,R^2 \pi_* \Lambda)).
\]
In the following lemma we explain how to relate this homomorphism with the intersection matrix of the irreducible components of the singular fibers.

\begin{lemma} \label{intersectionmatrix}
 Suppose that~$B$ is strictly henselian with closed point~$s$.
 Denote by~$\rho$ the intersection matrix of the components of the special fiber of~$\bar{\pi} : \bar{X} \to B$.
 Then, for any integer~$c$ the following diagram commutes
 \begin{center}
  \begin{tikzcd}
   H^2_{D_v}(X,\Lambda(c+1))					\ar[r]					& H^2(X,\Lambda(c+1))	\ar[d,"\text{base change}","\sim"']						\\
   H^0(D_v,\Lambda(c))						\ar[u,"purity","\sim"']			& H^2(X_s,\Lambda(c+1))	\ar[d,"\text{deg}","\sim"']							\\
   \displaystyle \bigoplus_{C \subseteq D_v} \Lambda(c) \cdot C	\ar[u,"\sim"']		\ar[r,"\rho"]	& \displaystyle \bigoplus_{C \subseteq \bar{X}_s, C \cap \bar{D}_h = \emptyset} \Lambda(c) \cdot C.
  \end{tikzcd}
 \end{center}
\end{lemma}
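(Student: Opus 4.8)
The plan is to trace a single generator of the bottom-left group through both routes of the diagram and to recognise the result as a column of $\rho$. It suffices to treat $c=0$, the general case following by twisting everything with $\Lambda(c)$, so I work with $\Lambda=\mZ/m\mZ$ and $\mu_m=\Lambda(1)$. First I would reduce to the case where $D_v$ is a single prime divisor. The pairwise intersections of the components of $D_v$ form a finite set $T$ of closed points of $X$, along which $X$ is regular (by tidiness, $T\subseteq\bar D$); since $T$ has codimension $2$, absolute purity forces $H^j_T(X,\Lambda)=0$ for $j\le 3$, so excision gives an isomorphism $H^2_{D_v}(X,\mu_m)\cong\bigoplus_{C}H^2_{C}\big(X\setminus\bigcup_{C'\neq C}C',\,\mu_m\big)$ which is compatible both with the purity isomorphism on the left of the diagram and with the forget-supports map to $H^2(X,\mu_m)$. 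Hence it is enough to follow, for one component $C$ of $D_v$ --- a regular curve lying in the special fibre $\bar X_s$ --- the image of the corresponding generator along the upper path.

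By the compatibility of Gabber's cycle class with first Chern classes coming from the Kummer sequence (see~\cite{Fuj00}), the composite of the purity isomorphism with the forget-supports map $H^2_C(X,\mu_m)\to H^2(X,\mu_m)$ carries the generator attached to $C$ to the image of $[\mO_X(C)]\in\Pic(X)/m$ under the Kummer inclusion $\Pic(X)/m\hookrightarrow H^2(X,\mu_m)$. The ``base change'' arrow $H^2(X,\mu_m)\to H^2(X_s,\mu_m)$ is the isomorphism of Proposition~\ref{basechange}, applied to $\bar\pi\colon\bar X\to B$ and the weakly regular pair $(\bar X,\bar D_h)$ --- legitimate because $\bar D_h$ is regular and meets $\bar X_s$ transversally by tidiness --- together with the facts that $B$ is strictly henselian (so cohomology over $X$ is the stalk of $R^n\pi_*$ at $s$) and has separably closed residue field (so $X_{\bar s}=X_s$); concretely it is pullback along $\iota\colon X_s\hookrightarrow X$, hence respects the Kummer sequences and sends $[\mO_X(C)]$ to $[\,\mO_X(C)|_{X_s}\,]$. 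Finally, $X_s$ is $\bar X_s$ with the finitely many points of $\bar D_h$ removed; passing to $(X_s)_{\mathit{red}}$ and then to its normalisation $\nu\colon X_{s,N}\to(X_s)_{\mathit{red}}$, the cokernel of $\Lambda\to\nu_*\Lambda$ is a skyscraper sheaf, so $H^2(X_s,\mu_m)\cong H^2(X_{s,N},\mu_m)\cong\bigoplus_{C'}\Lambda$, the sum taken over the complete components of $X_{s,N}$, that is, over the normalisations of those components $C'$ of $\bar X_s$ with $C'\cap\bar D_h=\emptyset$; under this identification --- which is the degree map of the diagram --- a line-bundle class $[L]$ goes to $\big(\deg_{C'}(L|_{C'})\big)_{C'}$, by the standard compatibility of the trace isomorphism with degrees of line bundles on proper curves. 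Combining the three steps, the generator attached to $C$ is sent by the upper path to $\big(\deg_{C'}(\mO_X(C)|_{C'})\big)_{C'}=\big((C\cdot C')\big)_{C'}$, which is exactly its image under the intersection matrix $\rho$; this is the claimed commutativity.

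The hard part is matching Gabber's purely cohomological purity and cycle-class formalism with the concrete Picard-group picture, and checking that the three auxiliary isomorphisms --- absolute purity, proper base change in the form of Proposition~\ref{basechange}, and the trace/degree isomorphism --- are each compatible with the relevant Kummer sequences, so that the whole square collapses to the elementary identity $\deg_{C'}(\mO_X(C)|_{C'})=(C\cdot C')$ defining the entries of $\rho$.
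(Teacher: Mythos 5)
Your proof is correct and follows essentially the same route as the paper: both identify the purity generators with the Kummer/Chern classes of $\mO_X(C)$ in $\Pic(X)\otimes\Lambda$, interpret the base change map as restriction along $X_s\hookrightarrow X$, and use the degree map on the proper components (those with $C'\cap\bar D_h=\emptyset$) to reduce the commutativity to $\deg_{C'}(\mO_X(C)|_{C'})=(C\cdot C')$. The only cosmetic difference is that you trace generators component by component via excision at the intersection points, whereas the paper packages the same compatibilities into a single diagram with a middle row $\bigoplus_C H^1_C(X,\mG_m)\otimes\Lambda\to\Pic(X)\otimes\Lambda\to\bigoplus_{C'}\Pic(C')\otimes\Lambda$.
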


\begin{proof}
 It suffices to prove the lemma for~$c = 0$.
 Consider the commutative diagram
 \begin{center}
  \begin{tikzcd}
   H^2_{D_v}(X,\mu_m)								\ar[r]				& H^2(X,\mu_m)			\ar[r,"\sim"]			& H^2(X_s,\mu_m)	\\\
   \displaystyle \bigoplus_{C \subseteq D_v} H^1_{C}(X,\mG_m) \otimes \Lambda	\ar[r]		\ar[u,"\sim"]	& Pic(X) \otimes \Lambda	\ar[r]		\ar[u,"\sim"]	& \displaystyle \bigoplus_{C \cap \bar{D}_h = \emptyset} Pic(C) \otimes \Lambda		\ar[u,"\sim"]	\ar[d,"\sim"',"(\mathit{deg}_C)_C"]	\\
   \displaystyle \bigoplus_{C \subseteq D_v} \Lambda \cdot C			\ar[u,"\sim"]	\ar[rr]		& 								& \displaystyle \bigoplus_{C \cap \bar{D}_h = \emptyset} \Lambda \cdot C.
  \end{tikzcd}
 \end{center}
 The direct sums on the right hand side run only over irreducible components of~$\bar{X}_s$ with trivial intersection with~$\bar{D}_h$
  as these are precisely the components of~$X_s$ which are proper over~$B$.
 The upper right horizontal isomorphism comes from Proposition\nobreakspace \ref {basechange}.
 The upper vertical maps are connecting homomorphisms of the Kummer sequence.
 The concatenation of the left hand side vertical arrows yields the purity isomorphism and the right hand vertical arrows give the degree map on~$H^2(X_s,\mu_m)$.
 The restrictions
 \begin{equation*}
  Pic(X) \to Pic(C)
 \end{equation*}
 are given by~$D \mapsto D \cdot C$ where~$D \cdot C$ denotes the intersection product of the divisor~$D$ with the curve~$C$.
 Composition with~$\deg_C$ yields the intersection number~$(D \cdot C)$.
 We conclude that the lower horizontal map is indeed given by the intersection matrix~$\rho_{C_1,C_2} = (C_1 \cdot C_2)$.
\end{proof}

We set
$$
\mZ(\mc) = \varprojlim_{n \in \mN(\mc)} \mZ/n\mZ = \prod_{\ell \in \mN(\mc)~\text{prime}} \mZ_{\ell}.
$$

\begin{lemma} \label{imagedivisiblebym}
 Assume that~$(\bar{X},\bar{D})$ has enough tame coverings.
 Then, for every integer~$d \in \mN(\mc)$ there is a desingularized~$\mc$-covering~$(\bar{X}',\bar{D}') \to (\bar{X},\bar{D})$ such that the image of
 \begin{equation*}
  H^2_D(X,\mZ(\mc)(1)) \to H^2_{D'}(X',\mZ(\mc)(1))
 \end{equation*}
 is divisible by~$d$.
\end{lemma}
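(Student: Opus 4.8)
The plan is to identify both cohomology groups with free $\mZ(\mc)$-modules by Gabber's purity theorem, to describe the transition map in terms of pull-back of cycle classes, and then to construct a covering along which every relevant multiplicity is divisible by~$d$.

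First I would compute the two groups. Since $X$ is regular along $D$ (a tidy divisor being snc in particular) and the set $S$ of special points of $\bar D$ is zero-dimensional, hence of codimension~$2$, the excision sequence for $S\subseteq D$ together with absolute purity kills the contributions of $S$ in the degrees that occur, namely $H^{i}_S(X,\mZ(\mc)(1))=0$ for $i<4$ and $H^{i}_{D-S}(X-S,\mZ(\mc)(1))=0$ for $i<2$; thus
\[
 H^2_D(X,\mZ(\mc)(1))\;\cong\;H^2_{D-S}(X-S,\mZ(\mc)(1))\;\cong\;\bigoplus_{C}\mZ(\mc),
\]
the sum running over the irreducible components $C$ of $D$, the $C$-summand being generated by the fundamental class of the prime divisor $C-S$ in $X-S$, i.e. by the image of the cycle class $\mathrm{cl}(C)$. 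The same computation for $(X',D')$ — legitimate because $D'$ is snc, so $X'$ is regular along $D'$ and $D'-S'$ is regular — gives $H^2_{D'}(X',\mZ(\mc)(1))\cong\bigoplus_{C'}\mZ(\mc)$ over all components $C'$ of $D'$, among them the (vertical) components of the exceptional divisor $E'$. Under these identifications, compatibility of cycle classes with the lci morphism $\bar\pi$ — this is Proposition \ref{purityfunctoriality} read one component at a time — shows that the map in the statement sends $\mathrm{cl}(C)$ to $\bar\pi^{*}\mathrm{cl}(C)=\sum_{C'}m_{C'}\,\mathrm{cl}(C')$, where $m_{C'}$ is the multiplicity of $C'$ in the Cartier divisor $\bar\pi^{*}C$. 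It therefore suffices to produce a desingularized $\mc$-covering with $d\mid m_{C'}$ for every component $C$ of $\bar D$ and every component $C'$ of $\bar D'$.

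To build it I would use the hypothesis: for each irreducible component $C$ of $\bar D$ choose a closed point $y_C\in C$, and use that $(\bar X,\bar D)$ has enough tame coverings to obtain $f_C\in K(\bar X)^{\times}$ with divisor supported on $\bar D$ and $\deg_C f_C>0$. For $k$ large, the normalization $\bar X_1$ of $\bar X$ in $K(\bar X)\bigl(\{\sqrt[d^{k}]{f_C}\}_C\bigr)$ is a tame $\mc$-covering of $(\bar X,\bar D)$: it is unramified outside $\bar D$, and its Galois group is a finite abelian $d$-group (a quotient of a power of $\mZ/d^{k}\mZ$), hence in $\mc$ since $\mc$ is full and $d\in\mN(\mc)$. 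Moreover, along every component of $\bar D_1$ lying over a given component $C$ of $\bar D$ the ramification index is divisible by $d$: the factor $\sqrt[d^{k}]{f_C}$ alone forces the index $d^{k}/\gcd(d^{k},\deg_C f_C)$ along $C$, which is a multiple of $d$ for $k$ large, and the remaining factors can only enlarge it. Composing with a tidy desingularization $\psi:(\bar X',\bar D')\to(\bar X_1,\bar D_1)$ (Proposition \ref{tidydesingularization}) produces the desingularized $\mc$-covering $\bar\pi$.

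Finally I would verify $d\mid m_{C'}$. If $C'$ dominates a component $C_0$ of $\bar D$, then $m_{C'}=0$ unless $C=C_0$, in which case $m_{C'}$ is the ramification index of $C'$ over $C_0$, divisible by $d$. If $C'\subseteq E'$, it is contracted by $\bar\pi$ to a special point $s$ of $\bar D$; if $s\notin C$ then $m_{C'}=0$, and if $s\in C$ I would choose a regular system of parameters $u,v$ at $s$ with $C=\{u=0\}$ (using that $\bar D$ is snc at $s$): by Abhyankar's lemma the tame covering $\bar X_1\to\bar X$ is étale-locally over $s$ a Kummer covering $u=u_1^{e}$, $v=v_1^{e'}$ with $e$ the ramification index over $C$, so that $\bar X_1\to\bar X$ pulls $C$ back to $e$ times an effective divisor near the fibre over $s$; since the blow-up part $\psi$ carries effective divisors to effective divisors, every multiplicity of $\bar\pi^{*}C$ supported near $E'$ is a multiple of $e$, hence of $d$. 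Consequently every coordinate of every $\bar\pi^{*}\mathrm{cl}(C)$ is divisible by $d$, i.e. the image of $H^2_D(X,\mZ(\mc)(1))$ lies in $d\cdot H^2_{D'}(X',\mZ(\mc)(1))$, which is the claim. I expect this last step to be the real obstacle: the exceptional components force one into the local structure of the tame covering, and one must check that $d$-divisibility of multiplicities survives the whole tower of blow-ups; the construction of the covering (in particular taking $d^{k}$-th rather than $d$-th roots, so that the ramification index is divisible by — not merely divides — $d$) is the second point needing care.
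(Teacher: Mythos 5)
Your reduction is fine and agrees with the paper: identify $H^2_D(X,\mZ(\mc)(1))$ and $H^2_{D'}(X',\mZ(\mc)(1))$ with the free $\mZ(\mc)$-modules on the components via purity and excision at the special points, observe that the map is pull-back of divisors, and reduce to making every multiplicity of $\bar\pi^*C$ divisible by $d$. The gap is exactly where you suspected it, in the exceptional components, and it is not just a missing verification: the covering you construct can genuinely fail the divisibility. At a point $s'$ of $\bar X_1$ over a crossing point $s$ of $\bar D$, Abhyankar's lemma does \emph{not} give the product Kummer form $u=u_1^{e}$, $v=v_1^{e'}$; it gives a quotient of such a Kummer cover. (If the product form always held, $\bar X_1$ would be regular along $\bar D_1$ and there would be no exceptional fibers at all; the whole point of Proposition~\ref{simplyconnected} is that Hirzebruch--Jung singularities occur, and they come precisely from these quotients.) For a quotient, $u$ is no longer an $e$-th power times a unit upstairs: the pull-back of $C$ to $\bar X_1$ is $e$ times a Weil divisor which is not Cartier, and its multiplicities along the exceptional curves of the resolution need not be divisible by $e$. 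Concretely, take $d=2$ and suppose your $f_C$ has local divisor $C+C_2$ at $s$ (your construction allows this, since you only control $f_C$ at the one point $y_C$ and never at the special points of $C$): the cover is étale-locally $z^2=uv$, an $A_1$-singularity, and on its resolution $\Div(u)=2\widetilde{P}+E$, so the multiplicity along the exceptional curve is $1$, not divisible by $2$. Thus "every multiplicity of $\bar\pi^{*}C$ supported near $E'$ is a multiple of $e$" is false in general, and with it the final claim for your covering.

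The paper's proof repairs exactly this by using the full strength of Definition~\ref{enoughtamecoverings} locally at \emph{every} point, not just at one $y_C$ per component, and by making the divisibility a consequence of a global identity rather than a local structure analysis. For each closed point $c\in C$ one chooses $f_c$ whose divisor in a neighborhood $U_c$ of $c$ is $m_cC$ with no other components, and passes to the covering generated by an $(m'_cd)$-th root $g$ of $f_c$, where $m'_c$ is the $\mN(\mc)$-part of $m_c$. Then on \emph{any} desingularized cover one has $\Div(f_c)=(m'_cd)\,\Div(g)$, so over $U_c$ every coefficient of $m_c\,\bar\pi^{*}C$ is divisible by $m'_cd$; since $m_c/m'_c$ is prime to $d$, every coefficient of $\bar\pi^{*}C$ over $U_c$ — exceptional components included — is divisible by $d$, with no need to know the local form of the singularities. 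Covering $C$ by finitely many such $U_c$ and dominating the finitely many coverings finishes the proof. If you want to keep your "one root per component" architecture, you would have to replace it by this pointwise construction (or otherwise force the local divisor of the radicand near each special point to be supported on $C$ alone); taking $d^{k}$-th roots of a single $f_C$ controls the ramification indices over $C$ but, as the $A_1$ example shows, not the exceptional multiplicities.
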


\begin{proof}
 By purity we have
 $$
 \bigoplus_{C \subseteq D} \mZ(\mc) \cdot C \overset{\sim}{\to} H^2_{D}(X,\mZ(\mc)(1)).
 $$
 Moreover, if~$(\bar{X}',\bar{D}') \to (\bar{X},\bar{D})$ is a desingularized~$\mc$-covering, the induced map
 $$
 \bigoplus_{C \subseteq D} \mZ(\mc) \cdot C \to \bigoplus_{C' \subseteq D'} \mZ(\mc) \cdot C'
 $$
 is given by the pull-back of divisors.
 Let~$C \subseteq D$ be an irreducible component and~$c \in C$ a closed point of~$D$.
 Since~$(\bar{X},\bar{D})$ has enough tame coverings, there is~$f_c \in K(\bar{X})^{\times}$ with support in~$\bar{D}$
  such that in a Zariski neighborhood~$U_c$ of~$c$ we have~$div~f_c = m_c C$ with $m_c > 0$.
 Denote by~$m'_c$ the maximal factor of~$m_c$ contained in~$\mN(\mc)$.
 Let~$\phi_c:(\bar{X}_c,\bar{D}_c) \to (\bar{X},\bar{D})$ be a desingularized~$\mc$-covering with function field extension
 $$
 K(\bar{X}_c) = K(\bar{X})\big(\sqrt[m'_cd]{f_c}\big)|K(\bar{X}).
 $$
 Then~$div~f_c$ is divisible by~$m'_c d$ as an element of $\text{Div}~X_c$.
 Thus,~$\phi_c^*|_{U_c}(C|_{U_c})$ is divisible by~$d$, \ie, the coefficients of all irreducible components of~$\phi_c^*(C)$ whose generic points lie over~$U_c$ are divisible by~$d$.
 This property is conserved by further desingularized coverings.

 There are finitely many closed points~$c_1,\ldots,c_n \in C$ such that the open subschemes~$U_{c_1},\ldots,U_{c_n}$ cover~$C$.
 Let~$(\bar{X}',\bar{D}') \to (\bar{X},\bar{D})$ be a desingularized~$\mc$-covering dominating all coverings $(\bar{X}_{c_i},\bar{D}_{c_i}) \to (\bar{X},\bar{D})$ constructed above.
 Then the pullback of~$C$ to~$\bar{X}'$ is divisible by~$d$.
\end{proof}

\begin{lemma} \label{cokernelvanishes}
 Let~$B_0$ be the strict henselization of~$B$ at a geometric point of~$B$ over a closed point.
 Denote by~$X_0$ and~$D_0$ the base change of~$X$ and~$D$, respectively, to~$B_0$.
 Assume that~$\bar{D}_h$ is nonempty and meets all irreducible components of~$W$.
 If~$(\bar{X},\bar{D})$ has enough tame coverings, the cokernel of
 \begin{equation*}
  H^2_{D_0}(X_0,\mZ(\mc)(1)) \to H^2(X_0,\mZ(\mc)(1))
 \end{equation*}
 vanishes in the limit over~$\mathfrak{I}_{(\bar{X},\bar{D})}$.
\end{lemma}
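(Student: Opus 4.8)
The strategy is to rewrite the cokernel in intersection‑theoretic terms via Lemma~\ref{intersectionmatrix} and Proposition~\ref{basechange}, to observe that it is a \emph{finite} group, and then to annihilate every one of its elements in the colimit using the divisibility furnished by Lemma~\ref{imagedivisiblebym} together with the torsion‑freeness of $H^2(X'_0,\mZ(\mc)(1))$ for all coverings.

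First I would pass to the closed fibre. Since $B_0$ is strictly henselian we have $\mZ(\mc)(1)\cong\mZ(\mc)$ on $X_0$, and Proposition~\ref{basechange} — applicable because $\bar D$ is tidy, so that $\bar D_h$ (base changed to $B_0$) is regular and meets the fibres transversally — together with the degree isomorphism identifies $H^2(X_0,\mZ(\mc)(1))$ with $\bigoplus_{C\in N}\mZ(\mc)\cdot C$, where $N$ is the set of irreducible components of $\bar X_{0,s}$ not meeting $\bar D_h$. Under this identification Lemma~\ref{intersectionmatrix} shows that the image of $H^2_{D_0}(X_0,\mZ(\mc)(1))$ contains the image of the intersection matrix $\rho_N=\big((C\cdot C')\big)_{C,C'\in N}$ of the vertical components of $\bar D$ lying in $\bar X_{0,s}$. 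Here I would record that $N$ is precisely the set of those vertical components: a component of $\bar X_{0,s}$ either lies in $\bar D$, hence is vertical and does not meet the isolated horizontal divisor $\bar D_h$; or lies in $W$, hence meets $\bar D_h$ by hypothesis; or, if $\bar X_{0,s}$ is non‑singular as a scheme, it is the unique (irreducible) fibre, which meets the non‑empty $\bar D_h$, so that $N=\emptyset$ and there is nothing to prove.

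The key input is then the classical fact that the intersection matrix of the connected fibre $\bar X_{0,s}$ is negative semidefinite with kernel spanned by the vector of multiplicities; since this vector has no zero entry and since $\bar D_h$, being non‑empty and horizontal, meets $\bar X_{0,s}$ — so that $N$ is a \emph{proper} subset of the set of all components of $\bar X_{0,s}$ — the principal submatrix $\rho_N$ is negative \emph{definite}, in particular invertible over $\mQ$. Therefore its image has finite index, and so does the image of $H^2_{D_0}(X_0,\mZ(\mc)(1))\to H^2(X_0,\mZ(\mc)(1))$; its cokernel $Q_0$ is a finite $\mZ(\mc)$‑module, whose exponent $\delta$ lies in $\mN(\mc)$ because $\mN(\mc)$ is divisor‑closed.

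It remains to kill $Q_0$ in the limit. Fix $\xi\in H^2(X_0,\mZ(\mc)(1))$ and, using that $\delta$ annihilates $Q_0$, write $\delta\xi=M(v)$ with $v\in H^2_{D_0}(X_0,\mZ(\mc)(1))$, where $M$ is the map to $H^2(X_0,\mZ(\mc)(1))$. By Lemma~\ref{imagedivisiblebym} applied with $d=\delta$ — legitimate since $(\bar X,\bar D)$ has enough tame coverings, and the conclusion persists after base change to $B_0$ by purity — there is a desingularized $\mc$‑covering $\pi\colon(\bar X',\bar D')\to(\bar X,\bar D)$ making the image of $H^2_{D_0}(X_0,\mZ(\mc)(1))\to H^2_{D'_0}(X'_0,\mZ(\mc)(1))$ divisible by $\delta$; write $\pi^*v=\delta w$. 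Functoriality of the excision sequence then yields
\[
 \delta\,\pi^*\xi \;=\; \pi^*M(v) \;=\; M'(\pi^*v) \;=\; \delta\,M'(w),
\]
so $\delta\,(\pi^*\xi-M'(w))=0$ in $H^2(X'_0,\mZ(\mc)(1))\cong\bigoplus_{C'\in N'}\mZ(\mc)\cdot C'$; as this group is torsion free, $\pi^*\xi=M'(w)$ lies in the image of $M'$, i.e.\ $\xi$ maps to $0$ in $\coker(M')$. The same argument applies with $(\bar X,\bar D)$ replaced by any object of $\mathfrak{I}_{(\bar X,\bar D)}$ (all of which retain enough tame coverings by Proposition~\ref{enoughtamecoveringsstable} and satisfy the remaining hypotheses), so every element of $\varinjlim_{\mathfrak{I}_{(\bar X,\bar D)}}\coker\big(H^2_{D'_0}(X'_0,\mZ(\mc)(1))\to H^2(X'_0,\mZ(\mc)(1))\big)$ vanishes, as required. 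I expect the delicate point to be the step showing $\rho_N$ is negative \emph{definite}, hence $Q_0$ finite — this is exactly where the hypotheses on $\bar D_h$ and $W$ are used — while the torsion‑freeness trick then makes the actual killing immediate.
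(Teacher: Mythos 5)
Your proposal is correct and follows essentially the same route as the paper: identify the map with the intersection matrix via Lemma~\ref{intersectionmatrix}, deduce negative definiteness (hence a $\mc$-torsion cokernel) from the hypotheses on $\bar{D}_h$ and $W$ using the semidefiniteness of the fibre intersection form, and then kill the cokernel with Lemma~\ref{imagedivisiblebym} together with torsion-freeness of $H^2(X'_0,\mZ(\mc)(1))$. Your variant of applying Lemma~\ref{imagedivisiblebym} over $B$ and base-changing, rather than working over $B_0$ and descending the coverings, is only a cosmetic difference from the paper's argument.
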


\begin{proof}
 We may replace~$B$ by~$B_0$.
 We just have to check that all tame coverings of~$(\bar{X}_0,\bar{D}_0)$ occurring in the proof come from coverings of $(\bar{X},\bar{D})$.
 It suffices to prove that the cokernel of
 $$
 \phi: H^2_{D_v}(X,\mZ(\mc)(1)) \to H^2(X,\mZ(\mc)(1))
 $$
 vanishes in the limit over~$\mathfrak{I}_{(\bar{X},\bar{D})}$ as~$H^2_{D_v}(X,\mZ(\mc)(1))$ is a direct summand of $H^2_D(X,\mZ(\mc)(1))$.
 Taking the inverse limit over all~$\Lambda \cong \mZ/n\mZ$ with~$n \in \mN(\mc)$ of the diagrams in Lemma\nobreakspace \ref {intersectionmatrix} and setting $c=0$, we obtain
 \begin{center}
  \begin{tikzcd}
   H^2_{D_v}(X,\mZ(\mc)(1))					\ar[r,"\phi"]					& H^2(X,\mZ(\mc)(1))							\ar[d,"\text{base change}","\sim"']	\\
   H^0(D_v,\mZ(\mc))						\ar[u,"\text{purity}","\sim"']			& H^2(X_s,\mZ(\mc)(1))						\ar[d,"\text{deg}","\sim"']		\\
   \displaystyle \bigoplus_{C \subseteq D_v} \mZ(\mc) \cdot C	\ar[u,"\sim"']			\ar[r,"\rho"]	& \displaystyle \bigoplus_{C \subseteq D_v} \mZ(\mc) \cdot C.
  \end{tikzcd}
 \end{center}
 Since we assumed that~$\bar{D}_h$ meets all irreducible components of~$W$, we have that $C \cap \bar{D}_h = \emptyset$ if and only if~$C \subseteq D_v$.
 By~\cite{Liu}, Theorem~9.1.23 the intersection matrix of the components of the special fiber is negative semidefinite and its radical is generated by a rational multiple of the special fiber.
 Since we assumed that~$\bar{D}_h$ is nonempty, the support of~$D$ does not comprise all irreducible components of the special fiber.
 Hence, the restriction~$\rho$ of the intersection matrix to the components of~$D_v$ is negative definite.
 We conclude that
 \begin{equation*}
  \phi \otimes \mQ : H^2_D(X,\mZ(\mc)(1)) \otimes \mQ  \to H^2(X,\mZ(\mc)(1)) \otimes \mQ
 \end{equation*}
 is an isomorphism and thus the cokernel of~$\phi$ is~$\mc$-torsion.
 Take~$d \in \mN(\mc)$ such that~$d \cdot \coker \phi = 0$.
 By Lemma\nobreakspace \ref {imagedivisiblebym} there is a desingularized~$\mc$-covering~$(\bar{X}',\bar{D}') \to (\bar{X},\bar{D})$ of~$(\bar{X},\bar{D})$ such that the image of
 \begin{equation*}
  H^2_D(X,\mZ(\mc)(1)) \to H^2_{D'}(X',\mZ(\mc)(1))
 \end{equation*}
 is divisible by~$d$.
 Taking into account that on $H^2(X',\mZ(\mc)(1))$ multiplication by~$d$ is injective, this proves the result.
\end{proof}

We can now specify sufficient conditions for assertion~(2) in Proposition\nobreakspace \ref {firstreductions} to hold:

\begin{proposition} \label{lerayfiltration}
 Assume that~$\bar{D}_h$ is nonempty and intersects all irreducible components of~$W$ and that~$(\bar{X},\bar{D})$ has enough tame coverings.
 Then 
 \[
  \coker (\varinjlim_{\mathfrak{I}_{(\bar{X},\bar{D})}} H^0(B',R^2_{D'} \pi'_* \Lambda) \to \varinjlim_{\mathfrak{I}_{(\bar{X},\bar{D})}} H^0(B',R^2 \pi'_* \Lambda)) = 0.
 \]
\end{proposition}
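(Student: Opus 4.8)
The plan is to deduce the statement from Lemma~\ref{cokernelvanishes} by passing from the strict henselization~$B_0$ to~$B$, and by rewriting global sections of the sheaves $R^2_{D'}\pi'_*\Lambda$ as Galois invariants of geometric‑fibre cohomology. For an object $(\bar X',\bar D')$ of~$\mathfrak{I}_{(\bar X,\bar D)}$ with Stein base~$B'$, the scheme~$B'$ is a henselian trait; writing $b'$ for its closed point, $\bar b'$ for a geometric point over it and $G_{b'}$ for the absolute Galois group of the residue field~$k(b')$, one has $H^0(B',\mathcal F)=\mathcal F_{\bar b'}^{\,G_{b'}}$ for every \'etale sheaf~$\mathcal F$ on~$B'$ (compute the equalizer defining $\mathcal F(B')$ for a finite \'etale Galois cover $B''\to B'$, use $B''\times_{B'}B''=\coprod_\gamma B''$, and pass to the limit). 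Combining this with the base change isomorphism of Proposition~\ref{basechange} — applied to $(\bar X',\bar D'_h)$ for $R^2\pi'_*\Lambda$, and, via the relative excision triangle on~$X'$ together with the five lemma, to $(\bar X',\bar D')$ for $R^2_{D'}\pi'_*\Lambda$ — identifies the map in the proposition with
\[
 H^2_{D'_{\bar b'}}(X'_{\bar b'},\Lambda)^{\,G_{b'}}\longrightarrow H^2(X'_{\bar b'},\Lambda)^{\,G_{b'}},
\]
the Tate twists being harmless since $\mu_\ell\cong\mZ/\ell\mZ$ on~$B$.

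Next I would feed the hypotheses of the proposition into Lemma~\ref{cokernelvanishes}. Since $X'_{\bar b'}$ is the special fibre of $X'\times_{B'}B_0$, base change gives $H^n(X'\times_{B'}B_0,-)\cong H^n(X'_{\bar b'},-)$, so Lemma~\ref{cokernelvanishes} says exactly that $H^2_{D'_{\bar b'}}(X'_{\bar b'},\mZ(\mc)(1))\to H^2(X'_{\bar b'},\mZ(\mc)(1))$ is surjective in the colimit over~$\mathfrak{I}_{(\bar X,\bar D)}$. Because absolute purity identifies the source with the free $\mZ(\mc)$-module $\bigoplus_{C\subseteq D'_{\bar b'}}\mZ(\mc)\cdot C$ and $X'_{\bar b'}$ is a curve (so $H^3(X'_{\bar b'},-)=0$, whence $H^2(X'_{\bar b'},\mZ(\mc))\twoheadrightarrow H^2(X'_{\bar b'},\Lambda)$), reduction modulo~$\ell$ yields the same statement with $\Lambda$-coefficients: the map $f'\colon H^2_{D'_{\bar b'}}(X'_{\bar b'},\Lambda)\to H^2(X'_{\bar b'},\Lambda)$ is surjective in the colimit.

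It remains to pass to $G_{b'}$-invariants. Fix $(\bar X',\bar D')$ and $\beta\in H^2(X'_{\bar b'},\Lambda)^{G_{b'}}$. After replacing $(\bar X',\bar D')$ by a cover supplied by the previous paragraph we may assume $\beta=f'(\alpha)$ for some $\alpha\in H^2_{D'_{\bar b'}}(X'_{\bar b'},\Lambda)$; then $g\mapsto(g-1)\alpha$ is a $1$-cocycle valued in $N':=\ker f'$, and its class in $H^1(G_{b'},N')$ is the only obstruction to lifting $\beta$ to $H^2_{D'_{\bar b'}}(X'_{\bar b'},\Lambda)^{G_{b'}}$. Now $N'$ is a finite $\mathbb{F}_\ell$-vector space, hence — $\mc$ being a full class containing $\mZ/\ell\mZ$ — a $G_{b'}$-module in~$\mc$, so strong $\mc$-goodness of~$G_{b'}$ gives $H^1(G_{b'},N')\cong H^1(G_{b'}(\mc),(N')^{H})$ with $H=\ker(G_{b'}\to G_{b'}(\mc))$. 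Restricting along finite \'etale $\mc$-coverings $B''\to B'$ — which change neither $X'_{\bar b'}$ nor the group $N'$, only shrink the acting group — and letting $k(b'')$ exhaust the maximal $\mc$-extension of~$k(b')$, the groups $G_{b''}(\mc)$ run through open subgroups of $G_{b'}(\mc)$ with trivial intersection, along which the cohomology of the fixed finite module $(N')^{H}$ vanishes in the colimit. Hence over a suitable further cover $\beta$ lifts to the invariants of the source, and passing to the colimit (which is exact) gives the asserted vanishing of the cokernel.

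The step I expect to be the main obstacle is this last passage to Galois invariants: one must ensure that the obstruction class in $H^1(G_{b'},N')$ can be killed while staying inside~$\mathfrak{I}_{(\bar X,\bar D)}$, i.e. by \'etale base change on~$B'$ alone and without disturbing the surjectivity obtained from Lemma~\ref{cokernelvanishes}; this is exactly where strong $\mc$-goodness (rather than plain $\mc$-goodness) is needed, since $N'$ need not be a $G_{b'}(\mc)$-module. The two auxiliary reductions — to the strict henselization, and from $\mZ(\mc)(1)$ to~$\Lambda$ — are routine bookkeeping.
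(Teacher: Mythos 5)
Your identification of $H^0(B',R^2_{D'}\pi'_*\Lambda)$ with $H^2_{D'_{\bar b'}}(X'_{\bar b'},\Lambda)^{G_{b'}}$ is the step that fails. Proposition~\ref{basechange} applies to a \emph{weakly regular} pair whose divisor meets the closed fibre \emph{transversally}; $(\bar X',\bar D')$ satisfies neither condition, since $\bar D'$ has crossing points and, more seriously, contains vertical components, which lie inside the special fibre instead of meeting it transversally. The conclusion is genuinely false, not just unproved: if $D'$ contains a single regular vertical component $E$, purity gives for the stalk $(R^2_{E}\pi'_*\Lambda(1))_{\bar b'}\cong H^2_{E_0}(X'_0,\Lambda(1))\cong H^0(E_0,\Lambda)\cong\Lambda$ (here the subscript $0$ denotes base change to the strict henselization), whereas excision on the special fibre gives an exact sequence $H^1(X'_{\bar b'}-E,\Lambda(1))\to H^2_{E_{\bar b'}}(X'_{\bar b'},\Lambda(1))\to H^2(X'_{\bar b'},\Lambda(1))\to 0$, so classes from $H^1$ of the open fibre (vanishing-cycle type classes) enter the right-hand group but not the stalk. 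Concretely, blowing up a closed point of the special fibre of $\mP^1_B$ and taking $D'=E$ together with $\bar D'_h$ a horizontal section through the strict transform, the stalk is $\Lambda$ while $H^2_{E_{\bar b'}}(X'_{\bar b'},\Lambda(1))$ has length~$2$. The detour through the special fibre is also unnecessary: Lemma~\ref{cokernelvanishes} is already a statement about the true stalk $H^2_{D'_0}(X'_0,\cdot)\to H^2(X'_0,\cdot)$, so your argument should be run with that map (only the target may be identified with fibre cohomology, via Proposition~\ref{basechange} applied to $(\bar X',\bar D'_h)$).

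With that repair, your remaining strategy (stalkwise surjectivity in the colimit with $\mZ(\mc)(1)$-coefficients, reduction mod $\ell$, then killing the obstruction class in $H^1(G_{b'},N')$ by base changes in $\mathfrak{I}_{B'}$) is workable when $B$ is a henselian trait, but it differs from the paper and carries two costs you should be aware of. First, the obstruction in $H^1(G_{b'},N')$ only arises because you reduce to $\Lambda$-coefficients at the stalk level, where the intersection matrix can be degenerate mod~$\ell$; the paper's proof stays with $\mZ(\mc)$-coefficients, where $\rho$ is injective by negative definiteness, so that by left exactness of $H^0(B,-)$ the cokernel of the $H^0$-map embeds into $H^0(B,\mathcal F)$ for the cokernel sheaf $\mathcal F$, which is killed stalkwise by Lemma~\ref{cokernelvanishes}; the passage to $\Lambda$ is then done on $H^0(B,\cdot)$ via the explicit permutation-module description (diagram~(\ref{intersectiondiagram})), and no residue-field Galois cohomology is needed at all. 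Second, the proposition is stated for a general excellent Dedekind base $B$: your argument needs every finite $\mc$-extension of $k(b')$ to be realized by an object of $\mathfrak{I}_{B'}$, which is automatic for a henselian trait (your standing assumption) but in general is exactly the $\pi_1$-injectivity hypothesis of Proposition~\ref{etalecovering}, which Proposition~\ref{lerayfiltration} does not assume. Finally, your phrase about ``$G_{b''}(\mc)$ running through open subgroups of $G_{b'}(\mc)$ with trivial intersection'' is imprecise; the clean formulation is continuity of Galois cohomology, $\varinjlim_{\mathfrak{I}_{B'}}H^1(G_{b''},N')\cong H^1(H,N')$, which vanishes precisely by the characterization of strong $\mc$-goodness given in Section~\ref{setupandnotation}.
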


\begin{proof}
 Since the assumptions are stable under desingularized tame coverings (see Proposition\nobreakspace \ref {enoughtamecoveringsstable}), it suffices to show that the cokernel of
 \[
  H^0(B,R^2_D \pi_* \Lambda) \to H^0(B,R^2 \pi_* \Lambda)
 \]
 is killed by a desingularized $\mc$-covering.
 We have a direct sum decomposition indexed by the irreducible components~$D_i$ of~$D$:
 \begin{equation*}
  R^2_D \pi_* \Lambda = \bigoplus_i R^2_{D_i} \pi_* \Lambda.
 \end{equation*}
 It is sufficient to prove that the cokernel of the vertical part vanishes after a desingularized~$\mc$-covering.
 Both~$R^2_{D_v} \pi_* \Lambda$ and~$R^2 \pi_* \Lambda$ are skyscraper sheaves with support in the singular locus of~$X \to B$.
 We can treat each singular fiber separately and thus assume that~$B$ is a henselian discrete valuation ring.
 We only have to make sure that the constructed desingularized~$\mc$-covering extends to a desingularized~$\mc$-covering above the initial base scheme.
 We have the following diagram of exact sequences
 \begin{equation} \label{intersectiondiagram}
  \begin{tikzcd}
   H^0(B,R^2_{D_v} \pi_* \mZ(\mc))	\ar[r,hookrightarrow,"\cdot m"]	\ar[d,"\rho"]	& H^0(B,R^2_{D_v} \pi_* \mZ(\mc))	\ar[r,twoheadrightarrow]	\ar[d,"\rho"]	& H^0(B,R^2_{D_v} \pi_* \Lambda)	\ar[d,"\rho"]	\\
   H^0(B,R^2 \pi_* \mZ(\mc))		\ar[r,hookrightarrow,"\cdot m"]			& H^0(B,R^2 \pi_* \mZ(\mc))		\ar[r,twoheadrightarrow]			& H^0(B,R^2 \pi_* \Lambda).
  \end{tikzcd}
 \end{equation}
 The exactness of the above sequences can be checked by using the explicit description of the cohomology groups involved.

 In order to show that the cokernel of the right hand side vertical map in diagram~(\ref{intersectiondiagram}) vanishes after a desingularized~$\mc$-covering
  it suffices to show that the cokernel of the middle vertical map does so.
 The stalk of the morphism~$R^2_{D_v} \pi_* \mZ(\mc) \to R^2 \pi_* \mZ(\mc)$ at~$\bar{b}$ is
 \begin{equation*}
  H^2_{D_{\bar{b}}}(X^{\mathit{sh}}, \mZ(\mc)) \to H^2(X^{\mathit{sh}}, \mZ(\mc)).
 \end{equation*}
 By Lemma\nobreakspace \ref {intersectionmatrix} it is given by the intersection matrix~$\rho$ of the components of~$D_{\bar{b}}$.
 Since~$D_{\bar{b}}$ does not contain all components of the geometric special fiber,~$\rho$ is injective.
 Denote by~$\mathcal{F}$ the cokernel.
 By Lemma\nobreakspace \ref {cokernelvanishes} there is a desingularized~$\mc$-covering~$(\bar{X}',\bar{D}') \to (\bar{X},\bar{D})$ such that~$\mathcal{F} \to \mathcal{F}'$ is the zero map (where~$\mathcal{F}'$ is the respective cokernel defined on~$X'$).
 We have an exact sequence
 \begin{equation*}
  0 \to H^0(B,R^2_{D_v} \pi_* \mZ(\mc)) \to H^0(B,R^2 \pi_* \mZ(\mc)) \to H^0(B,\mathcal{F}).
 \end{equation*}
 So the cokernel of~$H^0(B,R^2_{D_v} \pi_* \mZ(\mc)) \to H^0(B,R^2 \pi_* \mZ(\mc))$ is a subgroup of~$\mathcal{F}$.
 This shows the result.
\end{proof}

\section{Exceptional fibers} \label{exceptionalfibres}

Remember that we postponed the discussion of three issues:
Firstly, we have to show that the category $\mathfrak{I}_{(\bar{X},\bar{D})}$ is cofiltered.
Secondly, we have yet to see that the dual graph of the exceptional divisor of a desingularized tame covering is simply connected.
Finally, we need that the property of having enough tame coverings is stable under desingularized $\mc$-coverings.
All three assertions rely upon an examination of the exceptional fibers of a desingularized tame covering.
In this section we describe the structure of these exceptional fibers and answer the first two questions.
The treatment of the third question is completed in Section\nobreakspace \ref {arithmeticsurfaceswithenoughtamecoverings}.

Let us call curve a noetherian scheme whose irreducible components are one-dimensional.
We say that a curve~$C$ is a \emph{chain of~$\mP^1$'s}
 if it is a scheme of finite type over a field~$k$ whose irreducible components $C_1,\ldots,C_n$ are isomorphic to~$\mP^1_k$,
 for~$i=1,\ldots,n-1$ the curve~$C_i$ intersects~$C_{i+1}$ in exactly one point, which is moreover~$k$-rational, and~$C_i \cap C_j$ is empty for~$|i-j| \geq 2$.
If~$C$ is a closed subscheme of another curve~$C_0$, we say that~$C$ is a \emph{bridge of~$\mP^1$'s} in~$C_0$ if~$C$ is a chain of~$\mP^1$'s
 and~$C$ intersects exactly two of the remaining irreducible components of~$C_0$
 and this intersection takes place in two~$k$-rational points $c_1 \in C_1$ and $c_n \in C_n$ in the smooth locus of~$C_{\red}$.

The singularities of complex surfaces arising by taking a covering whose branch locus is an snc divisor are known to be Hirzebruch-Jung singularities, \ie of type~$A_{n,q}$
 (see \cite{BHPV}, II, Thm.~5.2).
This type of singularity was studied by Hirzebruch in \cite{Hi53}.
When working with arbitrary noetherian, normal surfaces and only allowing \emph{tame} coverings, the situation is basically the same.
For arithmetic surfaces over a discrete valuation ring with algebraically closed residue field this has been worked out by Viehweg (see \cite{Vie77}).
In general, it boils down to the fact that logarithmic singularities on a surface are of type $A_{n,q}$.
This should be well known.
However, the author was not able to find a good reference.
Therefore, we include a proof.

\begin{proposition} \label{simplyconnected}
 Let~$X/B$ be an arithmetic surface and~$D \subseteq X$ a tidy divisor.
 Let~$(X_1,D_1) \to (X,D)$ be a tame covering of~$(X,D)$ and~$(X'_{\mathit{min}},D'_{\mathit{min}}) \to (X_1,D_1)$ the minimal desingularization of~$(X_1,D_1)$.
 Then~$D'_{\mathit{min}}$ is a tidy divisor and the exceptional fibers of~$X'_{\mathit{min}} \to X_1$ are bridges of~$\mP^1$'s in~$D'_{\mathit{min}}$
  (\ie the singularities of~$X_1$ at points in~$D_1$ are of type $A_n$, or Hirzebruch-Jung singularities).
 In particular,~$(X'_{\mathit{min}},D'_{\mathit{min}}) \to (X,D)$ is a tidy desingularization of~$(X,D)$.
 Moreover, for any other desingularized tame covering~$(X',D') \to (X,D)$ the exceptional fibers are bridges of~$\mP^1$'s in~$D'$, as well.
\end{proposition}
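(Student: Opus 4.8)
The plan is to make everything local at a closed point $x_1\in D_1$ lying over a point $x\in D$, to recognize the local ring there as a Hirzebruch--Jung (cyclic quotient) singularity, and to resolve it explicitly; all three assertions are local on $X_1$, so I would fix such an $x_1$. Since $D$ is tidy, $X$ is regular at $x$, and if $x$ maps to a closed point $b$ of $B$ then $D$ together with the fibre $X_b$ is snc at $x$. As $X$ is a surface this means that $\widehat{\mO}_{X,x}$ is a $2$-dimensional regular complete local ring in which $D$ has local equation a monomial $u^av^b$ in a regular system of parameters $(u,v)$, with each of $\{u=0\}$, $\{v=0\}$ either a component of $D$ or disjoint from $D$, and with the fibre through $x$ (if any) among these two coordinate divisors. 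By the logarithmic Abhyankar lemma (\cite{GR11}, Thm.~7.3.44) the tame covering $(X_1,D_1)\to(X,D)$ is Kummer \'etale, so over $\widehat{\mO}_{X,x}$ it is the normalization in an extension obtained by adjoining roots $u^{1/e_1}$, $v^{1/e_2}$ with $e_1,e_2$ prime to the residue characteristic. I would first dispose of the easy case, where only one branch of $D$ passes through $x$, i.e. $x\notin S$: then the branch locus is regular near $x$, so the generalized Abhyankar lemma (\cite{SGA1}, Exp.~XIII, 5.3.0), used already in the proof of Lemma~\ref{preimages}, gives that $X_1$ is regular at $x_1$, that $D_1$ is regular there, and that the fibre stays transversal to $D_1$; nothing has to be resolved and tidiness is preserved locally.

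The heart of the argument is the case $x\in S$, where $D$ has exactly two branches through $x$, at least one of them vertical (horizontal components of a tidy divisor do not meet), with local equation $uv$ up to a unit and with the fibre equal to one of the coordinate axes or to $\{uv=0\}$. Here the normalization $\widehat{\mO}_{X_1,x_1}$ of $k[[u,v]]$ obtained by adjoining $u^{1/e_1}$, $v^{1/e_2}$ is a monomial (``toric'') ring, namely the invariants of $k[[s,t]]$ (with $s^{e_1}=u$, $t^{e_2}=v$) under a cyclic group acting diagonally through roots of unity --- that is, a Hirzebruch--Jung singularity of type $A_{n,q}$; this is the folklore statement flagged before the proposition (compare \cite{Hi53}, \cite{Vie77}, and \cite{BHPV}, II, Thm.~5.2). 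Its minimal resolution is obtained by successively blowing up the singular points, and since the ideals blown up are monomial, every exceptional curve is $\mP^1$ over the residue field $k(x)$ and meets the rest of the configuration transversally; the exceptional divisor is a chain $E_1,\dots,E_n$ (with self-intersections the Hirzebruch--Jung continued fraction of $n/q$) whose two ends $E_1$, $E_n$ carry the strict transforms of the branches $\{u=0\}$, $\{v=0\}$ of $D$, attached transversally at smooth $k(x)$-rational points. This says exactly that the exceptional fibre over $x_1$ is a bridge of $\mP^1$'s in $D'_{\mathit{min}}$. Near the resolved points $D'_{\mathit{min}}$ is the union of these two strict transforms with the chain, hence snc; being a union of torus-invariant curves with the fibre among its components, it crosses every vertical divisor transversally, so $D'_{\mathit{min}}$ is tidy. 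Finally, the minimal desingularization of $(X_1,D_1)$ coincides here with the minimal resolution of the finitely many surface singularities of $X_1$ (all lying on $D_1$), so it is obtained from $X_1$ by successive blow-ups in singular, hence special, points; thus $(X'_{\mathit{min}},D'_{\mathit{min}})\to(X_1,D_1)$ is a tidy desingularization.

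For an arbitrary desingularized tame covering $(X',D')\to(X_1,D_1)\to(X,D)$ I would use that, by definition, $(X',D')\to(X_1,D_1)$ factors through $(X'_{\mathit{min}},D'_{\mathit{min}})$ followed by blow-ups in special points of the successive tidy divisors. The exceptional divisor of $X'_{\mathit{min}}\to X_1$ is, by the above, a disjoint union of bridges of $\mP^1$'s. Each subsequent blow-up is at a crossing of two components of a tidy divisor on a surface that is regular there, so it inserts one $\mP^1$ over the residue field between the strict transforms of the two crossing components; if that crossing lay on one of the chains, the chain just gets a further link, and in all cases the total exceptional divisor over $X_1$ stays a disjoint union of bridges of $\mP^1$'s in $D'$, which is the last assertion.

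The main obstacle is the middle step: establishing rigorously, over an arbitrary excellent, possibly mixed-characteristic base with possibly imperfect residue fields, that a tame Kummer covering of a regular surface with snc branch locus has only Hirzebruch--Jung singularities whose minimal resolution is a chain of lines $\mP^1_{k(x)}$. This is precisely the point for which the text reports finding no reference, and it calls either for a careful toric argument valid in this generality or for an explicit induction through the continued-fraction blow-up algorithm, together with the verification that the exceptional curves are rational over $k(x)$ and not merely geometrically rational.
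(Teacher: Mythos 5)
Your outline reproduces the classical Hirzebruch--Jung picture, but the one step you defer at the end is not a technicality: it is the entire content of the proposition. The paper itself flags that the statement ``tame covers of a regular surface along an snc divisor have only $A_{n,q}$ singularities resolving into chains of $\mP^1$'s'' has no usable reference in this generality (excellent two-dimensional schemes, mixed characteristic, non-closed and possibly imperfect residue fields), and its proof exists precisely to supply that argument. So a proof that ends with ``this calls for a careful toric argument or a continued-fraction induction, plus a rationality check over $k(x_1)$'' has a genuine gap rather than a routine verification left to the reader. There is also an imprecision earlier that feeds into this: after completing at $x$, the tame cover is \emph{not} the normalization in $K(X)(u^{1/e_1},v^{1/e_2})$ --- that full Kummer cover is regular ($k[[s,t]]$-like), so it would never produce a singularity; the singular case is a proper subcover, and describing it as the invariant ring of a diagonal cyclic action on a power series ring requires the relevant roots of unity in the residue field (or a passage to the strict henselization), which is not available for an arbitrary tame covering in the proposition's hypotheses. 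Once you pass to $\overline{k(x)}$ to get the quotient description, the assertion that the exceptional curves of the minimal resolution are $\mP^1$'s \emph{over $k(x_1)$}, attached at $k(x_1)$-rational points, is exactly what still has to be proved, and your sketch does not do it. (The claim that the minimal resolution is obtained ``by successively blowing up the singular points'' is likewise asserted, not shown; in general one must argue via an explicit resolution and then control contractions.)

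For comparison, the paper avoids the completed-local quotient description altogether: it regards $(X,D)$ as a log-regular log scheme, uses the logarithmic Abhyankar lemma to see that $(X_1,D_1)$ is Kummer \'etale over it, hence itself log-regular with toric log structure, and then invokes Kato's theory of fans. The fan of $(X_1,D_1)$ at a special point is a single two-dimensional cone $\sigma$ with faces $\tau,\tau'$ corresponding to the two branches of $D_1$; subdividing $\sigma$ into smooth cones $\sigma_1,\ldots,\sigma_n$ yields a resolution with $X' = X_1 \times_{\mZ[\Delta]} \Spec \mZ[\Delta']$, whose exceptional fiber is $\Spec k(x_1) \times_{\mZ[\Delta]} \Spec \mZ[\Delta']$, locally $\Spec k(x_1)[\sigma_i^{\vee}/\sigma^{\vee}]$. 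This gives, in one stroke and uniformly over any residue field, both the chain combinatorics (the rays $\tau_1,\ldots,\tau_{n-1}$ give the $E_i$, with the strict transforms of the two branches attached at the ends) and the rationality of each $E_i$ over $k(x_1)$ --- the two points your approach leaves open. If you want to carry out your route instead, you would have to prove the mixed-characteristic, non-closed-residue-field version of the Hirzebruch--Jung resolution yourself, which essentially amounts to redoing this toric/log computation.
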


\begin{proof}
 We view~$(X,D)$ as a log scheme with log structure given by the divisor~$D$.
 Since~$D$ has normal crossings,~$(X,D)$ is log-regular and the corresponding log structure is toric.
 The tame covering $(X_1,D_1) \to (X,D)$ is Kummer \'etale by the logarithmic version of Abhyankar's lemma (see \cite{GR11}, Thm.~7.3.44).
 In particular, it is log-smooth and thus $(X_1,D_1)$ is log-regular and the corresponding log structure~$\mathscr{M}_{D_1} \to \mO_{X_1}$ toric, as well.
 In section~10 of~\cite{Kato94} Kato associates a fan~$F_{D_1}$ to the log scheme~$(X_1,D_1)$.
 In this context a fan is a monoidal space which is locally isomorphic to $\Spec P$ for a sharp monoid~$P$.
 As a topological space the fan~$F_{D_1}$ is the subspace $\{x \in X_1 | I(x,\mathscr{M}_{D_1}) = \mathfrak{m}_x\}$ of~$X_1$, where~$I(x,\mathscr{M}_{D_1})$ is the ideal generated by~$\mathscr{M}_{D_1,x}\backslash \mathscr{M}_{D_1,x}^{\times}$.
 The structure sheaf is given by the inverse image of~$\mathscr{M}_{D_1} \backslash \mO_{X_1}^{\times}$.
 Since the log structure of~$(X_1,D_1)$ is toric, the fan~$F_{D_1}$ corresponds to a classical fan~$\Delta$, i.e. a fan of convex polyhedral cones in a two-dimensional lattice~$L$ as described in~\cite{Ful93}.
 We may work locally and thus assume that~$\Delta$ consists of one two-dimensional cone~$\sigma$ together with its two one-dimensional faces~$\tau$ and~$\tau'$ and~$\{0\}$.
 The faces~$\tau$ and~$\tau'$ correspond to prime divisors~$P$ and~$P'$ on~$X_1$ constituting the irreducible components of~$D_1$ (see~\cite{Kato94}, Corollary~11.8).
 They intersect in one point~$x_1 \in X_1$, which is the only possibly singular point of~$X_1$.
 By~\cite{Ful93},~section~2.6 we can find a subdivision~$\Delta'$ of~$\Delta$ in cones which are isomorphic to~$\mN^2$.
 In dimension~$2$ a subdivision of~$\sigma$ is given by inserting additional rays~$\tau_1,\ldots,\tau_{n-1}$ forming the faces of cones~$\sigma_1,\ldots,\sigma_n$.
 \begin{center}
 \begin{tikzpicture}
  \foreach \x in {-0.5,0,0.5,1,1.5,2,2.5,3}
   \foreach \y in {-1.5,-1,-0.5,0,0.5,1,1.5,2}
    {
     \fill (\x,\y) circle (0.02cm);
     \fill (\x+8,\y) circle (0.02cm);
    }
    \draw (0,0)--(0,2.0)      node[above] {$\tau$};
    \draw (0,0)--(3,0)        node[right] {$\tau_1$};
    \draw (0,0)--(3,-1)       node[right] {$\tau_2$};
    \draw (0,0)--(3,-1.5)     node[right] {$\tau_3$};
    \draw (0,0)--(2.5,-1.5)   node[below] {$\tau'$};
    \draw (8,0)--(8,2.0)      node[above] {$\tau$};
    \draw (8,0)--(10.5,-1.5)  node[below] {$\tau'$};
    \draw (2.2,1.7) node {$\sigma_1$};
    \draw (2.75,-0.4) node {$\sigma_2$};
    \draw (2.75,-1.2) node {$\sigma_3$};
    \draw (2.9,-1.65) node {$\sigma_4$};
    \draw (9.7,0.7) node {$\sigma$};
    \draw[->] (4,0.5)--(6.5,0.5);
 \end{tikzpicture}
 \end{center}
 By~\cite{Kato94},~10.4. this provides us with a resolution of singularities~$(X',D') \to (X_1,D_1)$ such that~$D'$ has strictly normal crossings.
 The exceptional fiber consists of prime divisors~$E_1,\ldots,E_{n-1}$ corresponding to the rays~$\tau_1,\ldots,\tau_{n-1}$ and~$E_i$ intersects~$E_{i+1}$ in one point corresponding to the cone~$\sigma_i$.
 The strict transforms of~$P$ and~$P'$ correspond to the rays~$\tau$ and~$\tau'$.
 Hence, $P$ intersects~$\tau_1$ in one point and~$P'$ intersects~$\tau_{n-1}$ in one point.
 It remains to see that the~$E_i$ are rational.
 By the proof of~\cite{Kato94}, Proposition~9.9 we have
 $$
 X' = X_1 \times_{\mZ[\Delta]} \Spec \mZ[\Delta'].
 $$
 The exceptional fiber is thus given by
 $$
 \Spec k(x_1) \times_{\mZ[\Delta]} \Spec \mZ[\Delta'].
 $$
 Locally this is the spectrum of~$k(x_1)[\sigma_i^{\vee}/\sigma^{\vee}]$, which is readily checked to be rational.
\end{proof}

\begin{corollary} \label{coefficients}
 In the situation of Proposition\nobreakspace \ref {simplyconnected} let~$x_1$ be a special point of~$D_1$.
 Let~$Z_1$ be an irreducible component of~$D_1$ containing~$x_1$.
 Denote by~$Z'$ its strict transfer in~$X'_{\mathit{min}}$ and by~$Z$ its image in~$X$.
 Let~$E_1,\ldots,E_n$ be the irreducible components of the exceptional fiber of $X'_{\mathit{min}} \to X_1$ above~$x_1$
  such that~$E_i$ intersects~$E_{i+1}$ and~$Z'$ intersects~$E_1$.
 Then above an open neighborhood of~$x_1$ the pullback of~$Z$ to~$X'_{\mathit{min}}$ is given by
 $$
 a_0 Z' + a_1 E_1 + \ldots a_n E_n
 $$
 with~$a_0 > a_1 > \ldots > a_n > 0$.
\end{corollary}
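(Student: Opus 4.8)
Since the claim is local at $x_1$ and multiplicities of the pullback of a Cartier divisor may be computed after passing to the complete local ring, the plan is to read the coefficients $a_i$ directly off the toric description already set up in the proof of Proposition~\ref{simplyconnected}, the only extra ingredient being the minimality of $X'_{\mathit{min}}\to X_1$. Near $x_1$ the pair $(X_1,D_1)$ is log-regular with toric log structure, given by a single strongly convex two-dimensional cone $\sigma$ in a rank-two lattice $L$ whose two rays $\tau$ and $\tau'$ correspond to the two components of $D_1$ through $x_1$; label them so that $\tau$ corresponds to $Z_1$ and $\tau'$ to the other component. As all singularities of $X_1$ lie on $D_1$, the minimal desingularization of $(X_1,D_1)$ coincides locally with the minimal resolution of the toric singularity at $x_1$, so $X'_{\mathit{min}}$ is given by the minimal subdivision $\Delta'$ of $\sigma$; writing $v_0,v_1,\dots,v_n,v_{n+1}$ for the primitive generators of its rays in the order of the chain, $v_0$ spans $\tau$ and the torus-invariant divisor $D_{v_0}$ is the strict transform $Z'$, the ray $v_i$ corresponds to $E_i$ for $1\le i\le n$, $v_{n+1}$ spans $\tau'$, and the two-dimensional cones of $\Delta'$ are $\langle v_{i-1},v_i\rangle$ for $i=1,\dots,n+1$. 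First I would check that the image $x$ of $x_1$ lies on two branches of $D$: since $D$ is tidy, $X$ is regular at $x$; if $D$ had a single branch through $x$ then that branch is a regular curve, so by the generalized Abhyankar lemma (\cite{SGA1}, Exp.~XIII, 5.3.0) the tame covering is regular along the preimage of this branch and $D_1$ is tidy at $x_1$ --- but then either $x_1$ is not a special point, contradicting the hypothesis, or $x_1$ is not an exceptional point and the statement is vacuous. Hence near $x$ the pair $(X,D)$ is the smooth toric surface attached to a unimodular two-dimensional cone $\sigma_X$ with $Z$ and $Z''$ corresponding to its two rays, and the Kummer \'etale morphism $\pi$ corresponds to a finite-index lattice embedding $\phi\colon L\hookrightarrow L_X$ with $\phi(\sigma)\subseteq\sigma_X$, $\phi(\tau)$ contained in the ray of $Z$ and $\phi(\tau')$ in the ray of $Z''$.

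Next I would compute the total transform. Let $u\in L_X^\vee$ be the primitive functional with $Z=\operatorname{div}(\chi^u)$; since $Z$ is effective and $\sigma_X$ is smooth, $u\ge 0$ on $\sigma_X$ and $u$ vanishes on $\sigma_X$ exactly along the ray of $Z''$. The pullback of $Z$ to $X'_{\mathit{min}}$ above a neighbourhood of $x_1$ is $\operatorname{div}(\chi^{\phi^\vee u})=\sum_{i=0}^{n+1}a_i D_{v_i}$ with $a_i=\langle\phi^\vee u,v_i\rangle=\langle u,\phi(v_i)\rangle$. Because $\phi(v_i)\in\sigma_X$ we get $a_i\ge 0$ for all $i$; because $\phi(v_{n+1})$ lies in the ray of $Z''$ we get $a_{n+1}=0$; and for $0\le i\le n$ the ray $v_i$ is not contained in the line through $v_{n+1}$ (as $\sigma$ is strongly convex, $v_{n+1}$ is the only primitive vector of $\sigma$ on that line), so $\phi(v_i)$ does not lie on the zero locus of $u$ and $a_i>0$. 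In particular the coefficient $a_0$ of $Z'$ is positive --- it equals the ramification index of $Z_1$ over $Z$, but only $a_0>0$ is needed --- and $a_{n+1}=0$ explains why the strict transform of the other component does not appear in the stated expression.

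It remains to prove the strict inequalities $a_0>a_1>\dots>a_n>0$. For the minimal resolution of a two-dimensional cone the consecutive primitive ray generators satisfy $v_{i-1}+v_{i+1}=\alpha_i v_i$ with integers $\alpha_i\ge 2$ (\cite{Ful93}, \S2.6; this is exactly the Hirzebruch--Jung relation and is where minimality of $X'_{\mathit{min}}$ enters). Applying the linear form $\langle u,\phi(-)\rangle$ gives $a_{i-1}+a_{i+1}=\alpha_i a_i$, hence $(a_{i+1}-a_i)-(a_i-a_{i-1})=(\alpha_i-2)a_i\ge 0$ for $1\le i\le n$; thus the successive differences $d_i:=a_i-a_{i-1}$ satisfy $d_1\le d_2\le\dots\le d_{n+1}$. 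Their sum is $a_{n+1}-a_0=-a_0<0$, so $d_1<0$. If some $d_k\ge 0$, choose $k$ minimal; then $2\le k\le n+1$ and $d_k,\dots,d_{n+1}\ge 0$, so $0=a_{n+1}\ge a_{k-1}$, contradicting $a_{k-1}>0$ (note $1\le k-1\le n$). Hence every $d_i$ is negative, that is $a_0>a_1>\dots>a_n>a_{n+1}=0$, which is the assertion. The delicate part of the argument lies entirely in the first paragraph --- pinning down the local toric model and the lattice map $\phi$, and matching the minimal desingularization of the pair with the minimal toric resolution --- while the arithmetic with the $a_i$ is the short combinatorial step above.
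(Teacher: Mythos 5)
Your argument is correct in substance, but it takes a genuinely different route from the paper. The paper never touches the toric chart again: it sets $E_0 = Z'$, uses the projection formula $0=\varphi^*Z\cdot E_k$ together with $E_{k-1}\cdot E_k = [k(x_1):k(b)]$ (from Proposition~\ref{simplyconnected}) to get the recursion $[k(x_1):k(b)](a_{k-1}+a_{k+1}) + a_kE_k^2 = 0$, and feeds in minimality only through Castelnuovo: no exceptional $E_k$ is a $(-1)$-curve, so $E_k^2 \le -2[k(x_1):k(b)]$, whence a descending induction starting at $E_n$ gives $a_{k-1}>a_k$. You instead stay inside the combinatorial model of Proposition~\ref{simplyconnected}: you identify the $a_i$ with the pairings $\langle u,\phi(v_i)\rangle$, which buys you the positivity $a_0,\dots,a_n>0$ and the vanishing $a_{n+1}=0$ for free (the paper leaves positivity implicit), and you replace the intersection-theoretic recursion by the Hirzebruch--Jung relation $v_{i-1}+v_{i+1}=\alpha_i v_i$, $\alpha_i\ge 2$, concluding by convexity of the sequence $(a_i)$ rather than by induction from the last curve. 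The two recursions are of course the same identity in two languages, so the arguments are parallel; the genuine difference is where the work is hidden. In your version the load-bearing (and only under-justified) step is the sentence ``$X'_{\mathit{min}}$ is given by the minimal subdivision $\Delta'$ of $\sigma$'': identifying the minimal desingularization of the pair with the resolution coming from the minimal subdivision requires knowing that the exceptional curves of that resolution are not $(-1)$-curves in the present arithmetic setting, i.e.\ that $E_i\cong\mP^1_{k(x_1)}$ has self-intersection $-\alpha_i[k(x_1):k(b)]\le -2[k(x_1):k(b)]$ over $k(b)$; citing \cite{Ful93}, \S 2.6 covers the lattice combinatorics but not this relative statement, so you are implicitly re-deriving exactly the numerical input the paper uses directly. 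The fix is routine (compute $E_i^2$ on the subdivision model, or invoke Hirzebruch--Jung/Lipman theory for two-dimensional log-regular, i.e.\ toric, singularities), but it should be said. Minor cosmetic points: in the one-branch case the statement is trivially true ($n=0$, $a_0>0$) rather than ``vacuous'', and your preliminary reduction there is slightly muddled but harmless, since whenever $X_1$ is regular at $x_1$ there are no exceptional components above $x_1$ and nothing to prove.
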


\begin{proof}
 Denote by~$b$ the image of~$x$ in~$B$ and by~$\varphi$ the morphism $X'_{\mathit{min}} \to X$.
 In order to simplify notation, we set~$E_0 := Z'$.
 By the projection formula and Proposition\nobreakspace \ref {simplyconnected} we have
 $$
 0 = \varphi^*Z \cdot E_n = (a_0 E_0 + a_1 E_1 + \ldots a_n E_n) \cdot E_n = [k(x_1):k(b)]a_{n-1} + a_n E_n^2.
 $$
 Since the desingularization~$X'_{\mathit{min}} \to X_1$ is minimal,~$E_n$ cannot be a~$(-1)$-curve and thus~$E_n^2 < -[k(x_1):k(b)]$.
 (The self-intersection of~$E_n$ has to be negative by~\cite{Liu}, chapter~9, Theorem~1.27.)
 Hence,
 $$
 a_{n-1} = - a_n E_n^2 > a_n.
 $$
 By induction we may assume that~$a_{i+1} < a_i$ for~$0 < k \leq i < n$.
 Again by the projection formula we obtain
 $$
 0 = \varphi^*Z \cdot E_k = [k(x_1):k(b)](a_{k-1} + a_{k+1}) + a_k E_k^2.
 $$
 By induction and using~$E_k^2 \le -2[k(x_1):k(b)]$ we conclude that
 \begin{equation*}
 a_{k-1} = -a_{k+1} - \frac{a_k}{[k(x_1):k(b)]} E_k^2 \ge -a_{k+1} + 2a_k > a_k.
 \end{equation*}
\end{proof}

\begin{corollary} \label{twointersectionpoints}
 Let~$X/B$ be an arithmetic surface and~$D \subseteq X$ a tidy divisor.
 Let~$(X_1,D_1) \to (X,D)$ be a tame covering of~$(X,D)$ and~$(X',D') \to (X_1,D_1)$ a desingularization of~$(X_1,D_1)$.
 Assume that every irreducible component of an exceptional fiber of~$(X',D') \to (X_1,D_1)$ intersects the other irreducible components of~$D'$ in at least two points.
 Then~$(X',D') \to (X_1,D_1)$ is a tidy desingularization.
\end{corollary}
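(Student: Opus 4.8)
The plan is to compare the given desingularization with the minimal one. By the universal property of the minimal desingularization, $(X',D') \to (X_1,D_1)$ factors as
\[
 (X',D') \xrightarrow{\ \psi\ } (X'_{\mathit{min}},D'_{\mathit{min}}) \xrightarrow{\ \phi\ } (X_1,D_1),
\]
with $\phi$ the minimal desingularization of $(X_1,D_1)$. By Proposition~\ref{simplyconnected} the divisor $D'_{\mathit{min}}$ is tidy, so $\phi$ is already a tidy desingularization, with exceptional fibers that are bridges of $\mP^1$'s. Since $\phi$ and $\phi\circ\psi$ are isomorphisms over $X_1-D_1$, the map $\psi$ is an isomorphism over $X'_{\mathit{min}}-D'_{\mathit{min}}$, hence, over a neighbourhood of $D'_{\mathit{min}}$, a proper birational morphism of regular surfaces, so a finite composition of blow-ups in closed points (cf.~\cite{Liu}, Chapter~9). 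Write this tower as $(\widetilde X_0,\widetilde Z_0) = (X'_{\mathit{min}},D'_{\mathit{min}})$ and $(\widetilde X_i,\widetilde Z_i) \to (\widetilde X_{i-1},\widetilde Z_{i-1})$ the blow-up of a closed point $p_i$ with $(\widetilde X_n,\widetilde Z_n)=(X',D')$, where $\widetilde Z_i$ is the scheme-theoretic preimage of $D_1$. Each $p_i$ lies on $\widetilde Z_{i-1}$: a blow-up at a point off $\widetilde Z_{i-1}$ would produce an exceptional fibre component disjoint from $D'$, against the hypothesis. It then suffices to show that (a) each $p_i$ is a special point of $\widetilde Z_{i-1}$ and (b) $D'=\widetilde Z_n$ is tidy, for then the tower exhibits $(X',D')\to(X_1,D_1)$ as a tidy desingularization.

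For (a) I would argue by contradiction. Assume some $p_i$ is not a special point and pick the largest such index, so that $p_{i+1},\ldots,p_n$ are all special points. Since $p_i\in\widetilde Z_{i-1}$ is not special, $\widetilde Z_{i-1}$ is tidy at $p_i$ and $(\widetilde Z_{i-1})_{\mathit{red}}$ is regular there; hence near $p_i$ the reduced divisor is a single smooth curve $C$, and tidiness at $p_i$ forces the fibre through $p_i$ to be, locally, a multiple of one smooth curve transversal to $C$. Blowing up $p_i$ therefore produces an exceptional curve $E\cong\mP^1$ meeting $(\widetilde Z_i)_{\mathit{red}}$ in the single further point $q=E\cap\widetilde C$, and near $E$ the ambient reduced fibre is snc. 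The heart of the matter is the local stability claim, proved by induction over the remaining blow-ups: for every $j\ge i$ the strict transform $E^{(j)}$ of $E$ in $\widetilde X_j$ meets the other components of $(\widetilde Z_j)_{\mathit{red}}$ in at most one point, and the reduced fibre containing $E^{(j)}$ is snc near $E^{(j)}$; consequently the only special point of $\widetilde Z_j$ that can lie on $E^{(j)}$ is that one intersection point. Granting this, a blow-up at a special point either misses $E^{(j)}$ (leaving it unchanged) or is performed exactly at that intersection point (merely transferring it to the new exceptional curve), so $E^{(j)}$ always meets the rest of the divisor in at most one point. Hence the strict transform $\widehat E$ of $E$ in $X'$, which is an irreducible component of an exceptional fibre of $X'\to X_1$ contained in $D'$, meets the other components of $D'$ in at most one point, contradicting the hypothesis. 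Therefore all $p_i$ are special points.

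Given (a), claim (b) follows by a short induction: $\widetilde Z_0=D'_{\mathit{min}}$ is tidy by Proposition~\ref{simplyconnected}; a special point of a tidy divisor on an arithmetic surface is a transversal crossing of two of its components, which—since the horizontal components of a tidy divisor are pairwise disjoint—is of horizontal--vertical or vertical--vertical type; and one checks directly that blowing up such a crossing inserts a $\mP^1$ between the two strict transforms without creating triple points or tangencies, and without destroying transversality with any vertical divisor (again tidiness at the crossing pins down the surrounding fibre), so the blow-up of a tidy divisor in a special point is tidy. Iterating, $D'=\widetilde Z_n$ is tidy, and the argument is complete.

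The step I expect to be the main obstacle is the local stability claim in (a): one must track how the exceptional curve over a non-special centre—together with the curves created by the subsequent blow-ups in special points—sits inside the divisor, and show that it remains a leaf of the configuration meeting everything else in a single point. The two facts making this work are that ``not special'' forces the surrounding fibre to be a multiple of a smooth curve (so the initial configuration is snc and $E$ is a leaf), and that blowing up the unique relevant special point on $E^{(j)}$ preserves both the leaf property and snc-ness of the nearby fibre; the hypothesis ``at least two points'' is then precisely the obstruction to ever having made such a wasteful blow-up.
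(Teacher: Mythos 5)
Your proof is correct and takes essentially the same route as the paper: factor the desingularization through the minimal one (which is tidy with bridge-of-$\mP^1$ exceptional fibers by Proposition~\ref{simplyconnected}) and a chain of point blow-ups, then take the largest blow-up center that is not a special point and observe that its exceptional curve meets the other components of the divisor in only one point, a property preserved under the subsequent blow-ups in special points, contradicting the hypothesis. You merely spell out in more detail (the snc control of the nearby fibre, tidiness under blow-ups at special points) what the paper asserts in a few lines.
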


\begin{proof}
 We can factor~$(X',D') \to (X_1,D_1)$ as
 $$
 (X',D') := (X'_n,D'_n) \to \ldots \to (X'_1,D'_1) \to (X'_0,D'_0) \to (X_1,D_1),
 $$
 where~$(X'_0,D'_0) \to (X_1,D_1)$ is the minimal desingularization of~$(X_1,D_1)$ and for~$i=1,\ldots,n$ the morphism~$(X'_i,D'_i) \to (X'_{i-1},D'_{i-1})$ is the blowup of~$X'_{i-1}$ in a closed point~$d'_{i-1}$ of~$D'_{i-1}$.
 By Proposition\nobreakspace \ref {simplyconnected} the minimal desingularization~$(X'_0,D'_0) \to (X_1,D_1)$ is a tidy desingularization.
 Moreover, blowing up in closed points does not destroy the tidiness of a divisor.
 Hence,~$D'_i$ is a tidy divisor of~$X'_i$ for all~$i=0,\ldots n$.
 Suppose that~$(X',D') \to (X_1,D_1)$ is not a tidy desingularization.
 Then there is an index~$i$ such that~$d'_{i-1}$ is not a special point of~$D'_{i-1}$, \ie,~$d'_{i-1}$ is a regular point of~$D'_{i-1}$.
 Let~$i_0$ be the biggest such index.
 Then the exceptional fiber of~$(X'_{i_0},D'_{i_0}) \to (X'_{i_0-1},D'_{i_0-1})$ has only one intersection point with the other irreducible components of~$D'_{i_0}$.
 This does not change by blowing up~$D'_{i_0}$ in special points.
 We thus obtain a contradiction.
\end{proof}

Let~$D$ be a tidy divisor on an arithmetic surface~$X$ and~$\bar{x} \to U = X-D$ a geometric point.
The explicit description of the exceptional fibers in Proposition\nobreakspace \ref {simplyconnected} enables us to prove that the category~$\mathfrak{I}_{(X,D)}$ is cofiltered:

\begin{proposition} \label{cofiltered}
 The following assertions hold:
 \begin{enumerate}[label=(\roman*)]
  \item If~$(X',D') \to (X,D)$ and~$(X'',D'') \to (X',D')$ are both desingularized~$\mc$-coverings, the composite~$(X'',D'') \to (X,D)$ is again a desingularized~$\mc$-covering.
  \item If~$(X',D') \to (X,D)$ and~$(X'',D'') \to (X,D)$ are desingularized $\mc$-coverings, there is a commutative diagram of desingularized $\mc$-coverings
	\begin{center}
	 \begin{tikzcd}
					& (X',D')	\ar[rd]	&		\\
	  (X''',D''')	\ar[ru]	\ar[rd]	&			& (X,D).	\\
					& (X'',D'')	\ar[ru]	&
	 \end{tikzcd}
	\end{center}
 \end{enumerate}
\end{proposition}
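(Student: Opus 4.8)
The plan is to prove both parts by the same two-step pattern: first dispose of the covering data over the open complements $U = X - D$, using that the $\mc$-coverings of a connected scheme form a Galois category, and then control the birational modifications by means of the explicit toric description of exceptional fibers in Proposition~\ref{simplyconnected} together with the intersection-point criterion of Corollary~\ref{twointersectionpoints}. I will repeatedly use that a desingularized $\mc$-covering is a tame $\mc$-covering followed by a tidy desingularization, that a tidy desingularization is supported over the finitely many special points of its boundary divisor, and that all such special points lie over the special points of $D$.

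For part (i), the first step is to observe that on complements the induced map $U'' \to U' \to U$ is a composition of \'etale $\mc$-coverings, hence again one ($\mc$ being a full class, the $\mc$-coverings of $U$ form a Galois category). Normalizing $X$ in $K(X'') = K(U'')$ produces a finite morphism of pairs $(X_3, D_3) \to (X, D)$ with $X_3 - D_3 = U''$, and since $\mN(\mc)$ is invertible on $X$ this is a tame $\mc$-covering of $(X, D)$. The induced morphism $X'' \to X_3$ is proper and birational (both have function field $K(U'')$), and since every modification occurring inside the two given desingularized $\mc$-coverings is supported over special points of the respective boundary divisors, it is supported over the special points of $D_3$; moreover $X''$ is regular along the tidy divisor $D''$. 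By Proposition~\ref{simplyconnected}, near each special point $(X_3, D_3)$ is toric (being a tame covering of the toric pair $(X, D)$), so $X'' \to X_3$ is there a subdivision into regular cones and each exceptional curve meets the rest of $D''$ in at least two points. By Corollary~\ref{twointersectionpoints}, $(X'', D'') \to (X_3, D_3)$ is then a tidy desingularization, so $(X'', D'') \to (X_3, D_3) \to (X, D)$ is a desingularized $\mc$-covering.

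For part (ii), the first step is to choose, using the Galois category of $\mc$-coverings of $U$, a pointed \'etale $\mc$-covering $V \to U$ together with pointed maps $V \to U'$ and $V \to U''$ over $U$ (for instance the base-point component of $U' \times_U U''$), and to normalize $X$ in $K(V)$ to obtain a tame $\mc$-covering $(X_V, D_V) \to (X, D)$ with finite morphisms of pairs onto the tame $\mc$-coverings $(X_1, D_1) \to (X,D)$ and $(X_2, D_2) \to (X,D)$ underlying the two given desingularized $\mc$-coverings. Pulling back the tidy desingularizations $(X', D') \to (X_1, D_1)$ and $(X'', D'') \to (X_2, D_2)$ along these finite maps and normalizing yields tame $\mc$-coverings $(X_V', D_V') \to (X', D')$ and $(X_V'', D_V'') \to (X'', D'')$ which are at the same time modifications of $X_V$ supported over the special points of $D_V$. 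Next I would take a common modification $\tilde X \to X_V$ dominating both (for instance the base-point component of $X_V' \times_{X_V} X_V''$, normalized), still supported over special points, and dominate it by a tidy desingularization $(X''', D''') \to (X_V, D_V)$. Then $X'''$ maps to $X'$ through $X_V'$ and to $X''$ through $X_V''$, and as in part (i) the induced morphisms $(X''', D''') \to (X_V', D_V')$ and $(X''', D''') \to (X_V'', D_V'')$ are modifications over special points into a regular scheme with tidy boundary, hence tidy desingularizations by Corollary~\ref{twointersectionpoints}; thus $(X''', D''') \to (X', D')$ and $(X''', D''') \to (X'', D'')$ are desingularized $\mc$-coverings, $(X''', D''') \to (X, D)$ is one by part (i), and the diagram commutes because a dominant morphism of normal integral $X$-schemes is determined by the induced inclusion of function fields.

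The hard part will be the assertion used twice above, that composites and common refinements of modifications and tidy desingularizations which appear are again tidy desingularizations in the strict sense of the definition, and in particular that a modification of $X_V$ supported over special points is dominated by a tidy desingularization of $(X_V, D_V)$. This is precisely what the explicit description in Proposition~\ref{simplyconnected} is for. Localizing at a special point, such a modification corresponds to a subdivision of a two-dimensional cone $\sigma$; one refines it first to a subdivision into regular cones (\cite{Ful93}, section~2.6), then uses that every regular subdivision of $\sigma$ refines the minimal smooth subdivision (the minimal resolution being dominated by every resolution), and finally that every regular subdivision refining the minimal one is in turn refined by one obtained from the minimal subdivision by successive barycentric subdivisions of its regular cones — which, under the dictionary of Proposition~\ref{simplyconnected}, is exactly the operation of blowing up the special point of the divisor. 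Corollary~\ref{twointersectionpoints} then identifies, inside this combinatorial picture, which modifications are already tidy desingularizations, and this is what the "as in part (i)" steps rely on.
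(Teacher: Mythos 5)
Your overall reduction is the same as the paper's (normalize $X$ in $K(X'')$, resp.\ in the compositum of the function fields, to produce the underlying tame $\mc$-covering, then verify that the remaining birational morphism is a tidy desingularization via the two-intersection-point criterion of Corollary~\ref{twointersectionpoints}), but the step on which everything rests contains a genuine gap. In part (i) you infer ``so $X'' \to X_3$ is there a subdivision into regular cones'' from the facts that $(X_3,D_3)$ is toric near its special points and that the modification is supported over special points; and in your final paragraph you likewise assert that \emph{any} modification of $X_V$ supported over special points ``corresponds to a subdivision of a two-dimensional cone'' and hence is dominated by a tidy desingularization. This inference is false. Blow up a special point of a tidy pair and then blow up a closed point of the exceptional curve which is \emph{not} a special point of the new divisor: the composite is a proper birational modification whose exceptional locus lies over the original special point, the source is regular and its boundary divisor is still tidy, yet it corresponds to no subdivision -- the second exceptional curve meets the other components of the boundary in exactly one point -- and it is dominated by no tidy desingularization, since every centre occurring in a tidy desingularization lies over the two special points of the first exceptional curve and never over the second centre, so the universal property of blowing up forbids a factorization. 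Thus the ``hard part'' lemma you rely on for constructing $(X''',D''')$ in part (ii) is false as stated, and the two-point property in part (i) does not follow by the argument you give.

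What is true, and what must actually be used, is the specific structure of the given towers: every birational step inside a desingularized $\mc$-covering is a minimal desingularization of a log-regular pair or a blowup at a special point, and this has to be propagated through the comparison with $X_3$ (resp.\ $X_V$) rather than invoked for an abstract modification supported over special points. The paper does this by hand: for (i) it shows that each exceptional component of $X''\to X_2$ maps finitely onto the exceptional curve of some blowup at a special point in the tower, which meets the divisor in two points whose preimages persist, so Corollary~\ref{twointersectionpoints} applies; for (ii) it does \emph{not} claim that the normalized common covering is dominated by a tidy desingularization directly, but takes some tidy desingularization of $(X_3,D_3)$ dominating $X'$ and $X''$ via elimination of indeterminacies and then contracts the exceptional $(-1)$-curves having only one intersection point, checking that the contraction still maps to $X'$ and to $X''$. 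Alternatively, your toric route can be repaired by proving (not asserting) that all modifications actually occurring are log blowups, i.e.\ subdivisions in the sense of Proposition~\ref{simplyconnected} -- blowups at special points and minimal desingularizations are such, and log blowups are stable under the Kummer-\'etale base changes and saturated fibre products you form -- but that argument, or a reference for it, is precisely what is missing from your write-up.
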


\begin{proof}
 (i). Let~$X_1$ be the normalization of~$X$ in~$K(X')$ and~$X_2$ its normalization in~$K(X'')$.
 Furthermore, denote by~$X'_1$ the normalization of~$X'$ in~$K(X'')$.
 We obtain a Cartesian diagram
  \begin{center}
   \begin{tikzcd}
    D''	\ar[r]	\ar[d,hook]	& D_2	\ar[r]	\ar[d,hook]	& D	\ar[d,hook]	\\
    X''	\ar[r]			& X_2	\ar[r]			& X.
   \end{tikzcd}
  \end{center}
 Since~$U'=X'-D'$ is the normalization of~$U=X-D$ in~$K(X')$ and $U''=X''-D''$ is the normalization of~$U'$ in~$K(X'')$,
  we conclude that~$U''$ is also the normalization of~$U$ in~$K(X'')$.
 It is thus an open subscheme of~$X_2$ and~$U'' \to U$ is a finite \'etale~$\mc$-covering as finite \'etale~$\mc$-coverings are stable under composition.
 Hence,~$X''\to X_2$ is birational and an isomorphism on~$U''$.
 Moreover,~$D'' \subseteq X''$ is a tidy divisor.
 The only remaining question is whether~$X'' \to X_2$ is obtained from the minimal desingularization of~$(X_2,D_2)$ by successively blowing up in special points.
 By Corollary\nobreakspace \ref {twointersectionpoints} it suffices to show that every irreducible component of an exceptional fiber of~$X'' \to X_2$ meets the other irreducible components of~$D''$ in at least two points.
 The morphisms~$X'' \to X'$ and~$X' \to X$ factor as
 \begin{IEEEeqnarray*}{lllll}
  (X',D')   &= (Y_m,Z_m)		&\to \ldots \to (Y_0,Z_0) &= (X_1,D_1)   &\to (X,D),	\\
  (X'',D'') &= (Y_n,Z_n)	&\to \ldots \to (Y_{m+1},Z_{m+1}) &= (X'_1,D'_1) &\to (X',D'),
 \end{IEEEeqnarray*}
 where $(Y_0,Z_0) \to (X_1,D_1)$ and $(Y_{m+1},Z_{m+1}) \to (X'_1,D'_1)$ are minimal desingularizations of~$(X'_1,D'_1)$ and~$(X_1,D_1)$, respectively,
  and for $i=1,\ldots,m$ and~$i=m+2,\ldots,n$ the morphism~$(Y_i,Z_i) \to (Y_{i-1},Z_{i-1})$ is the blowup of~$Y_{i-1}$ in a special point~$z_{i-1}$ of~$Z_{i-1}$.
 Let~$E$ be an irreducible component of an exceptional fiber of~$X'' \to X_2$.
 There is~$i \in \{1,\ldots,m\} \cup \{m+2,\ldots n\}$ such that the image of~$E$ in~$Y_i$ is one-dimensional and its image in~$Y_{i-1}$ is a closed point.
 This closed point is precisely the point~$z_{i-1}$ and we obtain a finite morphism from~$E$ to the exceptional fiber of~$Y_i \to Y_{i-1}$.
 Since~$X_i \to X_{i-1}$ is the blowup of~$X_{i-1}$ in~$z_{i-1}$ and~$z_{i-1}$ is a special point, its exceptional fiber intersects the other irreducible components of~$Z_i$ in two points.
 The intersection points of~$E$ contain the preimages of these two points and thus there are at least two intersection points.

 (ii). Let~$K'''$ be the compositum of~$K(X')$ and~$K(X'')$ and~$X_3$ the normalization of~$X$ in~$K'''$.
 This defines a~$\mc$-covering~$(X_3,D_3) \to (X,D)$.
 We obtain rational maps~$X_3 \dashrightarrow X'$ and~$X_3 \dashrightarrow X''$, which, restricted to~$U_3=X_3-D_3$, are finite \'etale~$\mc$-coverings of~$U'=X'-D'$ and~$U''=X''-D''$, respectively.
 Using elimination of indeterminacies and the existence of tidy desingularizations we find a desingularization~$(X''',D''') \to (X_3,D_3)$ dominating~$(X',D')$ and~$(X'',D'')$ such that~$D'''$ is tidy.
 Suppose there is an irreducible component~$E$ of an exceptional fiber of~$X'''$ with only one intersection point with the other irreducible components of~$D'''$.
 Let us write
 $$
 (X''',D''') = (X'''_n,D'''_n) \to \ldots \to (X'''_0,D'''_0) \to (X_3,D_3),
 $$
 where~$(X'''_0,D'''_0) \to (X_3,D_3)$ is the minimal desingularization of~$(X_3,D_3)$ and for~$i=1,\ldots,n$ the morphism~$X'''_i \to X'''_{i-1}$ is the blowup of~$X'''_{i-1}$ in a closed point~$d_{i-1} \in D'''_{i-1}$.
 There is~$i \in \{1,\ldots,n\}$ such that the image of~$E$ is the point~$d_{i-1}$ and the image of~$E$ in~$X'''_i$ is the exceptional fiber~$E_i$ of~$X'''_i \to X'''_{i-1}$.
 Since~$E$ has only one intersection point, the same holds for~$E_i$.
 Furthermore, the blowup points~$d_{k-1}$ for~$k=i+1, \ldots,n$ must not lie above~$E_i$ except possibly above the intersection point~$z_{i-1}$ of~$E_{i-1}$ with the other irreducible components.
 One checks that after blowing up in~$z_{i-1}$ the strict transform of~$E_i$ is still a~$(-1)$-curve.
 Therefore, we can contract~$E$.
 Moreover, by similar arguments as in the proof of part~(i) the image of~$E$ in~$X'$ as well as in~$X''$ is a point.
 Hence, the contraction still factors through~$X' \to X$ and~$X'' \to X$.
 After finitely many contractions we may assume that all irreducible components of exceptional fibers of~$X''' \to X_3$ have at least two intersection points.
 Then the same holds for the exceptional fibers of~$X''' \to X'$ and of~$X''' \to X''$ as these are contained in the exceptional fibers of~$X''' \to X_3$.
 The assertion now follows from Corollary\nobreakspace \ref {twointersectionpoints}.
\end{proof}

\section{Stability of enough tame coverings} \label{arithmeticsurfaceswithenoughtamecoverings}

Let us fix an arithmetic surface $X/B$ and a tidy divisor $D \subseteq X$.
The aim of this section is to show:

\begin{proposition} \label{enoughtamecoveringsstable}
 Let~$\pi:(X',D') \to (X_1,D_1) \to (X,D)$ be a desingularized $\mc$-covering.
 If $(X,D)$ has enough tame coverings, the same holds for~$(X',D')$.
\end{proposition}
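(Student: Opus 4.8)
The plan is to verify Definition~\ref{enoughtamecoverings} for $(X',D')$ at an arbitrary closed point $x'$ of $D'$, splitting according to whether $x'$ lies on the exceptional divisor $E'$ of $X'\to X_1$. Write $\pi\colon X'\to X$ for the composite morphism; by Lemma~\ref{preimages} (and the remarks following it) $\operatorname{supp}(D')=\pi^{-1}(\operatorname{supp}D)$, so every $f\in K(X)^\times$ with support in $D$ pulls back to $\pi^*f\in K(X')^\times$ with support in $D'$, with $\operatorname{div}_{X'}(\pi^*f)=\pi^*\operatorname{div}_X(f)$. One preliminary remark I would record: since $D$ is tidy, $X$ is regular along $D$, and over a point $x$ of $D$ the preimage in $X_1$ of a component of $D$ through $x$ is irreducible near every regular point of $X_1$ over $x$ (generalized Abhyankar lemma, \cite{SGA1}, Exp.~XIII, 5.3.0); consequently distinct components of $D'$ lying over the same component of $D$ never meet.

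If $x'\notin E'$, then $X'\to X_1$ is an isomorphism near $x'$, so near $x'$ the pair $(X',D')$ is merely a tame covering of $(X,D)$ and every component of $D'$ through $x'$ is the strict transform of a component of $D$. Given such a component $C'$ lying over $C\subseteq D$ through $x:=\pi(x')$, I would take $f\in K(X)^\times$ furnished by the hypothesis (support in $D$, $\deg_C(f)>0$, $\deg_P(f)=0$ for the other prime divisors $P$ through $x$) and check that $\pi^*f$ works: $\deg_{C'}(\pi^*f)=e\cdot\deg_C(f)>0$ for a ramification index $e\ge 1$, while the only possible other component $P'$ of $D'$ through $x'$ is the strict transform of a component $P\ne C$ of $D$ through $x$ (by the remark above), hence $\deg_{P'}(\pi^*f)=0$; degrees along prime divisors not supported on $D'$ vanish automatically since $\pi^*f$ has support in $D'$.

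The substantial case is $x'\in E'$. By Proposition~\ref{simplyconnected} and Corollary~\ref{twointersectionpoints}, and using that $X'\to X_1$ is an isomorphism over the non-special points of $D$, the connected component of $E'$ containing $x'$ is a chain $E_1-\dots-E_k$ of $\mP^1$'s whose two ends meet the strict transforms $Z'$, $W'$ of two \emph{distinct} components $Z$, $W$ of $D$ passing through a common special point $x$ of $D$ lying under the chain, these being all the components of $D'$ that meet the chain. Applying the hypothesis at $x$ (where the only prime divisors of $X$ are $Z$ and $W$, by tidiness) I get $g_Z,g_W\in K(X)^\times$ with support in $D$, $\deg_Z(g_Z)>0=\deg_W(g_Z)$ and $\deg_W(g_W)>0=\deg_Z(g_W)$. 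Near the chain one has $\operatorname{div}_{X'}(\pi^*g_Z)=n_Z\,\pi^*Z$ and $\operatorname{div}_{X'}(\pi^*g_W)=m_W\,\pi^*W$ with $n_Z,m_W>0$, because no component of $D$ other than $Z$, $W$ passes through $x$ and $\pi^*C$ has no component along the chain for $C\not\ni x$. Writing $\pi^*Z=a_0Z'+\sum_{i}a_iE_i$ and $\pi^*W=b_0W'+\sum_ib_iE_i$ near the chain, Corollary~\ref{coefficients} handles the minimal desingularization, giving $a_0>a_1>\dots>a_k>0$ and $b_0>b_k>\dots>b_1>0$; a straightforward induction over the further blow-ups of the tidy desingularization (each centred at $Z'\cap E_1$, some $E_i\cap E_{i+1}$, or $E_k\cap W'$), using that the coefficient of a new exceptional curve in a total transform is the sum of the coefficients of the two components it separates, shows that along every such chain all coefficients remain positive and that $a_ib_{i+1}\ne a_{i+1}b_i$ for $1\le i<k$, while $Z'$ has coefficient $0$ in $\pi^*W$ and $W'$ has coefficient $0$ in $\pi^*Z$.

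It then remains to verify Definition~\ref{enoughtamecoverings} at $x'$, whose set of components of $D'$ consists of at most two consecutive entries of $Z',E_1,\dots,E_k,W'$. For an interior point of some $E_i$ one has $\deg_{E_i}(\pi^*g_Z)=n_Za_i>0$. For $x'=E_i\cap E_{i+1}$, choosing integers $\alpha,\beta$ with $\alpha n_Za_{i+1}+\beta m_Wb_{i+1}=0$ makes $(\pi^*g_Z)^\alpha(\pi^*g_W)^\beta$ have support in $D'$, degree $0$ along $E_{i+1}$, and degree proportional to $a_ib_{i+1}-a_{i+1}b_i\ne 0$ along $E_i$, so a suitable sign of $\alpha$ makes it positive; swapping $E_i$ and $E_{i+1}$ yields the companion function. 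For $x'=Z'\cap E_1$, the function $\pi^*g_W$ is positive along $E_1$ and zero along $Z'$, whereas $(\pi^*g_Z)^\alpha(\pi^*g_W)^\beta$ with $\alpha n_Za_1+\beta m_Wb_1=0$ and $\alpha>0$ is positive along $Z'$ and zero along $E_1$; the case $x'=E_k\cap W'$ is symmetric. The hard part is precisely the coefficient bookkeeping behind the last two steps: extracting from Proposition~\ref{simplyconnected} and Corollary~\ref{coefficients} that the exceptional chains are bridges between strict transforms of \emph{distinct} components $Z$, $W$ of $D$ and that the multiplicity vectors of $\pi^*Z$ and $\pi^*W$ on adjacent chain components are never proportional — this non-proportionality is exactly what lets the single pair $g_Z,g_W$, pulled back from $(X,D)$, produce separating functions at every point of the chain.
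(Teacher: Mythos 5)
Your proof is correct and takes essentially the same route as the paper: in both arguments the heart is Corollary~\ref{coefficients}, which makes the multiplicity vectors of $\pi^*Z$ and $\pi^*W$ along adjacent components of an exceptional chain non-proportional (the determinant $a_ib_{i+1}-a_{i+1}b_i$ is nonzero), combined with pulling back the functions supplied by the hypothesis on $(X,D)$ and taking suitable monomial combinations. The paper packages this as surjectivity of the multiplicity homomorphisms (Definition~\ref{multiplicitymatrix}, Lemmas~\ref{regularblowup} and~\ref{multiplicitysurjective}), whereas you perform the same determinant bookkeeping pointwise and track it through the additional blow-ups directly; this is a difference of presentation, not of substance.
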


For the proof of Proposition\nobreakspace \ref {enoughtamecoveringsstable} we need to investigate the multiplicities of the irreducible components of the pullback to~$X'$ of a prime divisor on~$X$.

\begin{definition} \label{multiplicitymatrix}
 Let~$f:(X',D') \to (X,D)$ be a desingularized $\mc$-covering.
 Let~$x' \in D'$ be a closed point and denote by~$x \in D$ the image of~$x'$ in~$X$.
 Let us call~$D_1,\ldots,D_n$ (necessarily $n=1$ or~$n=2$) the irreducible components of~$D$ passing through~$x$
  and~$D'_1,\ldots,D'_m$ ($m \leq n$) the irreducible components of~$D'$ passing through~$x'$.
 Restricting~$f$ to a suitable neighborhood of~$x'$, the pullback of Cartier divisors via~$f$ induces a homomorphism
 $$
 \mQ \cdot D_1 \oplus \ldots \oplus \mQ \cdot D_n \to \mQ \cdot D'_1 \oplus \ldots \oplus \mQ \cdot D'_m.
 $$
 We call this morphism \emph{multiplicity homomorphism} at~$x'$ and its transformation matrix with respect to the above bases \emph{multiplicity matrix} at~$x'$.
\end{definition}

Multiplicity homomorphisms are compatible with composition.
If~$(X'',D'') \to (X',D')$ is another morphism as above and~$x''$ a closed point of~$D''$ mapping to~$x' \in D'$, the multiplicity homomorphism of~$(X'',D'') \to (X',D')$ at~$x''$
 is the composition of the multiplicity homomorphism of~$(X'',D'') \to (X',D')$ at~$x''$ and the multiplicity homomorphism of~$(X',D') \to (X,D)$ at~$x'$.

\begin{lemma} \label{regularblowup}
 Let~$(X',D') \to (X,D)$ be the blowup of~$X$ in a special point~$x$ of~$D$.
 Then all multiplicity homomorphisms are surjective.
\end{lemma}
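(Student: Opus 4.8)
The plan is to reduce the statement to an explicit computation on the blowup of a regular surface at a point lying on two transversally meeting branches of~$D$. Since~$x$ is a special point of the tidy divisor~$D$, the surface~$X$ is regular at~$x$ and exactly two irreducible components~$D_1,D_2$ of~$D$ pass through~$x$, meeting transversally there; first I would pick a regular system of parameters~$t_1,t_2$ of~$\mO_{X,x}$ with~$D_i = V(t_i)$ in a neighborhood of~$x$. The blowup~$f\colon X' \to X$ of~$X$ in~$x$ then has exceptional divisor~$E \cong \mP^1_{k(x)}$, is an isomorphism over~$X \setminus \{x\}$, and is covered by the two standard charts with coordinates~$(t_1,t_2/t_1)$ and~$(t_1/t_2,t_2)$. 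From these charts I would read off that the strict transforms~$\widetilde{D_1}$ and~$\widetilde{D_2}$ each meet~$E$ transversally in a single point, that these two points are distinct, and that as Cartier divisors
\[
 f^*D_1 = \widetilde{D_1} + E, \qquad f^*D_2 = \widetilde{D_2} + E,
\]
the coefficient of~$E$ being~$1$ in each case since~$D_i$ is reduced and regular at~$x$.

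Next I would run through the closed points~$x'$ of~$D'$. If~$x' \notin E$, then~$f$ is a local isomorphism at~$x'$ and the multiplicity homomorphism at~$x'$ is the identity, hence surjective. So I may assume~$x' \in E$, that is~$f(x') = x$; then the only irreducible components of~$D'$ that can pass through~$x'$ are~$\widetilde{D_1}$, $\widetilde{D_2}$ and~$E$, and I distinguish cases accordingly. If~$x' \notin \widetilde{D_1} \cup \widetilde{D_2}$, the only component through~$x'$ is~$E$; restricting the displayed identities to a neighborhood of~$x'$ shows that both~$D_1$ and~$D_2$ pull back to~$E$, so the multiplicity homomorphism~$\mQ \cdot D_1 \oplus \mQ \cdot D_2 \to \mQ \cdot E$ is surjective. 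If~$x' = E \cap \widetilde{D_1}$ (the case~$x' = E \cap \widetilde{D_2}$ being symmetric), then~$\widetilde{D_2}$ does not pass through~$x'$, so near~$x'$ one has~$f^*D_1 = \widetilde{D_1} + E$ and~$f^*D_2 = E$, and the multiplicity matrix in the bases~$(D_1,D_2)$ and~$(\widetilde{D_1},E)$ is~$\left(\begin{smallmatrix} 1 & 0 \\ 1 & 1 \end{smallmatrix}\right)$, which is invertible and in particular surjective. Since every closed point of~$D'$ falls into one of these cases, the lemma follows.

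I do not expect a genuine obstacle here: the argument is elementary. The only step demanding a little care is the bookkeeping on the two charts of the blowup --- checking that~$\widetilde{D_1}$ and~$\widetilde{D_2}$ meet~$E$ in distinct points, and that~$E$ occurs with multiplicity exactly~$1$ in each~$f^*D_i$; once that is settled, surjectivity in every case is immediate.
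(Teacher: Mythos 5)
Your proposal is correct and follows essentially the same route as the paper: both compute $f^*D_i = \widetilde{D_i} + E$ and then check surjectivity pointwise on $D'$, obtaining the invertible matrix $\left(\begin{smallmatrix} 1 & 0 \\ 1 & 1 \end{smallmatrix}\right)$ at the two intersection points of $E$ with the strict transforms, the nonzero row $\left(\begin{smallmatrix} 1 & 1 \end{smallmatrix}\right)$ at the other points of $E$, and the identity elsewhere. Your extra chart-by-chart verification that $E$ occurs with coefficient $1$ and that the strict transforms meet $E$ in distinct points is just a more explicit justification of facts the paper uses without comment.
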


\begin{proof}
 Denote by~$D_1$ and~$D_2$ the irreducible components of~$D$ passing through~$x$ and by~$D'_1$ and~$D'_2$ their strict transforms in~$X'$.
 Furthermore, let~$E$ denote the singular fiber of~$X' \to X$.
 On~$E \subseteq D'$ there are two points~$x'_1$ and~$x'_2$ where~$D'$ is singular, namely the respective intersection points with~$D'_1$ and~$D'_2$.
 The pullback of~$D_i$ is given by~$D'_i + E$.
 Hence, the intersection matrix at~$x'_1$ as well as at~$x'_2$ (with respect to the bases~$\{(D_1,D_2),(D'_1,E)\}$ and~$\{(D_1,D_2),(E,D'_2)\}$, respectively) is
 $$
 \begin{pmatrix}
  1 & 0 \\
  1 & 1
 \end{pmatrix},
 $$
 which is invertible.
 If~$x' \in E$ is a nonsingular point of~$D'$, its multiplicity matrix is
 $$
 \begin{pmatrix}
  1 & 1
 \end{pmatrix},
 $$
 which is nonzero and thus its multiplicity homomorphism is surjective.
 The multiplicity homomorphism at any other closed point of~$D'$ is the identity.
\end{proof}

\begin{lemma} \label{multiplicitysurjective}
 Let~$\varphi:(X',D') \to (X_1,D_1) \to (X,D)$ be a desingularized $\mc$-covering.
 Then all multiplicity homomorphisms are surjective.
\end{lemma}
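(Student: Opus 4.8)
The plan is to factor $\varphi$ into elementary pieces and invoke the compatibility of multiplicity homomorphisms with composition (stated just before Lemma~\ref{regularblowup}), together with the remark that a composite of surjective linear maps is surjective. Since a desingularized $\mc$-covering is a tame covering followed by a tidy desingularization, and a tidy desingularization is the minimal desingularization followed by finitely many blowups in special points, $\varphi$ factors as
\[
 (X',D') = (X_N,Z_N) \to \cdots \to (X_0,Z_0) \to (X_1,D_1) \to (X,D),
\]
where $(X_1,D_1)\to(X,D)$ is a tame $\mc$-covering, $(X_0,Z_0)\to(X_1,D_1)$ is the minimal desingularization, and each $(X_i,Z_i)\to(X_{i-1},Z_{i-1})$ is a blowup in a special point of $Z_{i-1}$. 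It therefore suffices to prove surjectivity of all multiplicity homomorphisms for each of these three kinds of morphism.

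For the blowups this is exactly Lemma~\ref{regularblowup}: $X_0$ is regular along $Z_0$ and blowing up a regular surface in a closed point preserves regularity, so each $X_{i-1}$ is regular at the center of the blowup; and since each $Z_{i-1}$ is tidy (tidiness is preserved by blowups in special points), a special point of $Z_{i-1}$ is precisely a point where two of its components meet, which is the situation of Lemma~\ref{regularblowup}. For the tame covering, fix $x_1\in D_1$ over $x\in D$. Near $x$ the pair $(X,D)$ is log-regular, and by the logarithmic Abhyankar lemma (\cite{GR11}, Thm.~7.3.44) the covering is Kummer \'etale, so locally at $x_1$ it is, up to a finite \'etale base change that does not affect multiplicities, the toric morphism attached to a finite-index sublattice inclusion in the chart used in the proof of Proposition~\ref{simplyconnected}. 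There the prime divisor of $D$ corresponding to a ray of the relevant cone pulls back to a positive multiple of the prime divisor corresponding to the same ray, and to no other component; hence the multiplicity matrix at $x_1$ is, after reordering the bases, diagonal with positive entries (a single positive entry when only one component of $D$ passes through $x$), in particular surjective.

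The heart of the argument is the minimal desingularization $(X_0,Z_0)\to(X_1,D_1)$. Fix a closed point $x_0$ of $Z_0$ with image $x_1\in D_1$. Where $X_1$ is regular, the morphism is a local isomorphism and the multiplicity homomorphism is the identity. Otherwise $x_1$ lies on two components $Z_1,W_1$ of $D_1$ and $X_1$ has a Hirzebruch--Jung singularity there; by Proposition~\ref{simplyconnected} the fiber over $x_1$ is a chain of $\mP^1$'s $E_1,\dots,E_n$ with the strict transforms of $Z_1$ and $W_1$ attached at its two ends, and Corollary~\ref{coefficients} shows that, near $x_1$, the pullback of $Z_1$ (resp. of $W_1$) has strictly decreasing positive coefficients along $E_1,\dots,E_n$ read from the $Z_1$-end (resp. from the $W_1$-end). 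One uses here that Corollary~\ref{coefficients} computes the pullback from $X$, which near $x_1$ differs from the pullback from $X_1$ only by the global positive factor given by the ramification index of the tame covering along the component in question, so the strict monotonicity is unaffected. Reading off the multiplicity matrix at $x_0$ then yields, at an interior point of some $E_i$, a $1\times 2$ matrix with two positive entries, and at a node $E_i\cap E_{i+1}$ (or at $Z_1\cap E_1$) a $2\times 2$ matrix whose first column strictly decreases and whose second column strictly increases along the chain; such a matrix has strictly positive determinant, so the homomorphism is surjective. I expect this step to be the main obstacle: it requires translating Corollary~\ref{coefficients} from a pullback from $X$ to one from $X_1$ and then verifying the elementary determinant inequalities, whereas the other two pieces are, respectively, a direct citation and a routine toric computation.
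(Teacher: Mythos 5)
Your proof is correct and follows essentially the same route as the paper: reduce via the compatibility of multiplicity homomorphisms with composition and Lemma~\ref{regularblowup} to the (minimal) desingularization case, then conclude from Corollary~\ref{coefficients} that the relevant $2\times 2$ matrix has strictly positive determinant because its columns are strictly decreasing, respectively strictly increasing, along the chain of $\mP^1$'s. The only (harmless) difference is that you split off the tame covering as a separate diagonal factor and then rescale by the ramification index when invoking Corollary~\ref{coefficients}, whereas the paper sidesteps this by applying Corollary~\ref{coefficients} directly to the composite pullback from~$X$, for which it is stated.
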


\begin{proof}
 By Lemma\nobreakspace \ref {regularblowup} we may assume that~$X' \to X_1$ is the minimal desingularization of~$X_1$.
 Let~$x' \in D'$ be a closed point and denote by~$x_1$ and~$x$ the image of~$x'$ in~$X_1$ and~$X$, respectively.
 If~$x'$ is a regular point of~$D'$, there is only one irreducible component of~$D'$ passing through~$x'$.
 Hence, the multiplicity homomorphism at~$x'$ is surjective if and only if it is nonzero, which is clear by taking the pullback of any irreducible component of~$D$ passing through~$x$.

 Suppose that~$x'$ is a singular point of~$D'$.
 Then also~$x_1$ and~$x$ are singular points of~$D_1$ and~$D$, respectively.
 There are two irreducible components~$Z_1$ and~$W_1$ of~$D_1$ passing through~$x_1$ mapping to the irreducible components~$W$ and~$Z$ of~$D$ passing through~$x$.
 According to Corollary\nobreakspace \ref {coefficients} we have in a neighborhood of~$x'$
 $$
 \varphi^*Z = a_0 Z' + a_1 E_1 + \ldots a_n E_n
 $$
 with~$a_0 > a_1 > \ldots a_n > 0$ and
 $$
 \varphi^*W = b_1 E_1 + \ldots b_n E_n + b_{n+1} W'
 $$
 with~$b_1 < \ldots < b_n < b_{n+1}$, and where~$Z'$ and~$W'$ denote the strict transforms of~$Z_1$ and~$W_1$, respectively, in~$X'$.
 Setting~$E_0 := Z'$ and~$E_{n+1} := W'$ we know that there is an integer~$i$ with~$0 \leq i \leq n$ such that~$x'$ is the intersection point of~$E_i$ with~$E_{i+1}$.
 The multiplicity matrix at~$x'$ is
 $$
 \begin{pmatrix}
  a_i     & b_i      \\
  a_{i+1} & b_{i+1}
 \end{pmatrix}
 $$
 and
 $$
 \mathit{det}  \begin{pmatrix}
		a_i     & b_i      \\
		a_{i+1} & b_{i+1}
	       \end{pmatrix}		= a_i b_{i+1} - a_{i+1} b_i > a_i b_i - a_i b_i = 0
 $$
 as~$a_{i+1} < a_i$ and~$b_{i+1} > b_i$.
 Therefore, also in this case the multiplicity homomorphism is surjective.
\end{proof}

\begin{proof}[Proof of Proposition\nobreakspace \ref {enoughtamecoveringsstable}]
 Assume that~$(X,D)$ has enough tame coverings.
 Let $x' \in D'$ be a closed point and~$Z'$ an irreducible component of~$D'$ passing through~$x'$.
 We have to find $f' \in K(X')^{\times}$ with support in~$D'$ such that $\deg_{Z'}(f') > 0$
  and~$\deg_{C'}(f') = 0$ for all other irreducible components~$C'$ of~$D'$ passing through~$x'$.
 Let~$Z_1,\ldots,Z_r$ (for~$r=1$ or~$r=2$) denote the irreducible components of~$D$ passing through the image point~$x \in D$ of~$x'$.
 Since~$(X,D)$ has enough tame coverings, for~$i=1,\ldots,r$ there is~$f_i \in K(X)^{\times}$ with support in~$D$
  such that~$\deg_{Z_i}(f_i) > 0$ and~$\deg_{Z_j}(f_i) = 0$ for~$i \neq j$.
 The projections of~$div~f_i$ to
 $$
 \mQ \cdot Z_1 \oplus \ldots \oplus \mQ \cdot Z_r
 $$
 constitute a basis of this vector space.
 Let~$Z' = Z'_1,\ldots,Z'_s$ denote the irreducible components of~$D'$ passing through~$x'$.
 Lemma\nobreakspace \ref {multiplicitysurjective} provides the surjectivity of the multiplicity homomorphism
 $$
 \phi_{x'}:\mQ \cdot Z_1 \oplus \ldots \oplus \mQ \cdot Z_r \to  \mQ \cdot Z'_1 \oplus \ldots \oplus \mQ \cdot Z'_s
 $$
 at~$x'$ induced by pullback.
 We obtain integers~$d,k_1,\ldots,k_r$ with~$d > 0$ such that
 $$
 d \cdot Z'_1 = \phi_{x'}(k_1 div~f_1 + \ldots k_r div~f_r).
 $$
 In other words, setting~$f = f_1^{k_1} \cdot \ldots f_r^{k_r}$ we have in a neighborhood of~$x'$
 $$
 div~f = d \cdot Z',
 $$
 what we wanted to prove.
\end{proof}

\section{Neighborhoods with enough tame coverings} \label{neighborhoodswithenoughtamecoverings}

At this point we have completed the discussion of conditions~(1) and~(2) in Proposition\nobreakspace \ref {firstreductions}.
As a result we know that under the assumptions listed in Proposition\nobreakspace \ref {etalecovering} and Proposition\nobreakspace \ref {lerayfiltration} the arithmetic surface~$U$ is $K(\pi,1)$ with respect to~$\mc$.
The remaining task is to construct neighborhoods on a given arithmetic surface satisfying these assumptions.
The property of having enough tame coverings is the most difficult to realize.
It is the aim of this section to explain how to construct neighborhoods with enough tame coverings.

We use the following notation:
For an integral closed subscheme~$Z$ of an affine scheme~$Spec~A$ we denote by~$\mathfrak{p}_Z$ the prime ideal of~$A$ corresponding to the generic point of~$Z$.
Moreover,~we write~$m_x(Z)$ for the multiplicity of a closed subscheme~$Z$ in a point~$x$.

\begin{lemma} \label{picardgenerators}
 Let~$X/B$ be a quasi-projective arithmetic surface such that~$B$ is a discrete valuation ring with finitely generated quotient field.
 Let~$\mathscr{M}$ be a finite subset of~$X$ containing all singular points.
 Then there are horizontal prime Cartier divisors~$G_1,\ldots,G_s$, $G_{s+1},\ldots,G_r$
  such that~$G_1,\ldots,G_s$ and~$G_{s+1},\ldots,G_r$ each generate $\text{Pic}(X)$.
 Furthermore, the supports of~$G_i$ for~$i=1,\ldots,r$ do not contain any $x \in \mathscr{M}$ and the supports of~$G_i$ and~$G_j$ for~$i \leq s$ and~$j > s$ are disjoint.
\end{lemma}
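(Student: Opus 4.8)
The plan is to generate $\Pic(X)$ by taking the irreducible components of general members of sufficiently positive linear systems, and to obtain the two families with disjoint supports by running this construction a second time over the complement of the first family.

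First I would record that $\Pic(X)$ is finitely generated; this is one of the two places where the hypothesis that $K:=\operatorname{Frac}(R)$ is finitely generated over its prime field enters. The generic fibre $X_\eta$ is a regular curve over $K$, so by the Lang--N\'eron theorem $\Pic^0$ of its smooth compactification is a finitely generated abelian group; since the boundary of that compactification, the finitely many vertical prime divisors of $X$, and $\Pic(R)=0$ contribute only finitely many further generators and the relevant localization sequences are exact, $\operatorname{Cl}(X)$ and hence $\Pic(X)\subseteq\operatorname{Cl}(X)$ are finitely generated. Fix invertible sheaves $L_1,\dots,L_t$ whose classes generate $\Pic(X)$, and let $\mathcal{O}_X(1)$ be an ample sheaf coming from a quasi-projective embedding $X\hookrightarrow\mathbb{P}^N_B$. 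For $n\gg 0$ the sheaves $\mathcal{O}_X(n)$ and $L_j(n):=L_j\otimes\mathcal{O}_X(n)$ are globally generated (Serre's theorem, applied to coherent extensions over the projective closure $\bar X$ of $X$ in $\mathbb{P}^N_B$ and restricted to $X$).

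Next, for any globally generated invertible sheaf $\mathcal{F}$ on $X$ I would pick a section $\sigma\in H^0(X,\mathcal{F})$ whose divisor is horizontal and misses $\mathscr M$: a section whose divisor contains a component of the special fibre vanishes identically on some component of $X_s$, and a section whose divisor meets $\mathscr M$ vanishes at one of the finitely many points of $\mathscr M$; each such condition cuts out a proper $K$-subspace of $H^0(X,\mathcal F)$, so since $K$ is infinite a general $\sigma$ avoids all of them. Then every irreducible component of $\operatorname{div}(\sigma)$ is a horizontal prime divisor missing $\mathscr M$, and it is automatically Cartier: away from $\mathscr M\supseteq\operatorname{Sing}(X)$ the scheme $X$ is regular, hence locally factorial, while near $\mathscr M$ the component is empty. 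In particular no Bertini-type irreducibility statement is needed. Applying this to $\mathcal F=L_j(n)$ and to $\mathcal F=\mathcal{O}_X(n)$ and using $[L_j]=[L_j(n)]-[\mathcal{O}_X(n)]$, the finitely many components of the chosen divisors of sections of $L_1(n),\dots,L_t(n)$ and of $\mathcal{O}_X(n)$ are horizontal prime Cartier divisors missing $\mathscr M$ whose classes together generate $\Pic(X)$; these form the first family $G_1,\dots,G_s$.

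For the second family I would set $Z=\bigcup_{i\le s}\operatorname{Supp}(G_i)$; since its components are horizontal prime divisors, $X\setminus Z$ is again a quasi-projective arithmetic surface over $B$, and rerunning the construction there produces horizontal prime divisors of $X\setminus Z$ missing $\mathscr M$ whose classes generate $\Pic(X\setminus Z)$; taking closures in $X$ gives horizontal prime Cartier divisors disjoint from $Z$, and one sees along the way that $\Pic(X)\to\Pic(X\setminus Z)$ is surjective. The one point that needs care is that passing from $X$ to $X\setminus Z$ loses exactly the classes of the components of $Z$, so to recover all of $\Pic(X)$ one must in addition represent each such class by a horizontal prime divisor of $X$ whose support is disjoint from $Z$ and from $\mathscr M$; here one uses that $X$ is not proper over $B$, so that a horizontal prime divisor of $X$ need not be proper over $B$ and — working on the projective closure, where two horizontal primes that are distinct closed points of the generic fibre can only meet in the special fibre — can be moved so that its finitely many potential intersection points with $Z$ fall into the boundary $\bar X\setminus X$. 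The collection of all divisors so obtained gives $G_{s+1},\dots,G_r$, with support disjoint from that of $G_1,\dots,G_s$ and with classes generating $\Pic(X)$. The hard part will be precisely this disjointness step: an ample class has no effective prime representative disjoint from a complete curve, so one cannot simply move divisors inside ample linear systems and must genuinely exploit the non-properness of $X/B$; by contrast, finite generation of $\Pic(X)$ (via Lang--N\'eron) and the passage from irreducible divisors to irreducible components (which is why the non-closedness of $K$ is harmless here) are routine.
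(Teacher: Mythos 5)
Your opening move (finite generation of $\Pic(X)$ via N\'eron/Lang--N\'eron on the generic fibre plus the vertical components) agrees with the paper, but both of the steps where you actually produce divisors have genuine gaps. The section-avoidance step is not justified as written: $H^0(X,\mathcal{F})$ is a module over the discrete valuation ring $R$, not a $K$-vector space, and the conditions ``$\sigma$ vanishes at $x\in\mathscr{M}$'' or ``$\sigma$ vanishes along a component of the special fibre'' are conditions on the reduction of $\sigma$ modulo the maximal ideal, i.e.\ linear conditions over the residue fields $k(x)$ resp.\ $k$, which in the intended applications are finite (this is precisely why the paper needs Poonen's Bertini theorem elsewhere, in Lemma 8.4). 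Finitely many subgroups of this kind can cover the whole module, so ``$K$ is infinite'' buys you nothing. The step is repairable -- for $n\gg 0$ Serre vanishing makes the evaluation map onto the fibres at the finitely many relevant points (the points of $\mathscr{M}$ and one auxiliary closed point on each vertical component) surjective, and one prescribes nonzero values -- but as stated it fails.

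The disjointness step is the more serious problem. The closure in $X$ of a prime divisor of $X\setminus Z$ can perfectly well meet $Z$, so ``taking closures in $X$ gives divisors disjoint from $Z$'' is false; and for recovering the classes of the components of $Z$ you offer only the admittedly incomplete idea of pushing intersection points into $\bar{X}\setminus X$, which is also aimed at the wrong mechanism: non-properness of $X/B$ is not what makes this work. The paper's observation is that, since $B$ is the spectrum of a discrete valuation ring, a horizontal prime divisor has \emph{finite} underlying topological space (its generic point plus finitely many closed points in the special fibre), so $\mathscr{M}':=\mathscr{M}\cup\operatorname{Supp}(G_1)\cup\dots\cup\operatorname{Supp}(G_s)$ is again a finite set of points and one can rerun the same moving argument with $\mathscr{M}'$ in place of $\mathscr{M}$. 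The moving argument itself also differs from yours and avoids your first problem: any finite set of points of a scheme quasi-projective over an affine base lies in a common affine open (Liu, Proposition 3.3.36); semi-localizing at $\mathscr{M}'$ together with the generic points of the chosen generators $D_j$ and of the vertical components gives a semi-local ring with trivial Picard group, so $D_j=\operatorname{div}(f_j)$ there, and $D_j-\operatorname{div}(f_j)$ is a linearly equivalent Cartier divisor which is horizontal and misses the prescribed finite set; its prime components are Cartier because they avoid the singular points. Without these two ingredients (or genuine substitutes) your outline does not close.
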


\begin{proof}
 The generic fiber~$X_{\eta}$ of~$X \to B$ is a smooth curve over a finitely generated field.
 By a generalization of the Mordell-Weil theorem due to N\'eron (see~\cite{Ne52}) its Weil divisor class group is finitely generated.
 Denote by~$C_1,\ldots,C_l$ the irreducible components of the special fiber.
 The Weil divisor class group $CH^1(X)$ of~$X$ is generated by the Weil divisor class group of~$X_{\eta}$ and by~$C_1,\ldots,C_l$.
 It is therefore also finitely generated.
 As~$X$ is normal, the Picard group of~$X$ injects into $CH^1(X)$.
 It is thus generated by finitely many Cartier divisors~$D_1,\ldots,D_m$.
 
 Since~$X$ is quasi-projective over an affine scheme, there is an affine open subscheme of~$X$ containing $\mathscr{M}$
  and all generic points of the supports of $D_1,\ldots,D_m$ and $C_1,\ldots,C_l$ (see~\cite{Liu}, Proposition~3.3.36).
 Taking the limit over all of these affine open subschemes, we obtain the spectrum of a semi-local ring~$A$.
 As the Picard group of semi-local schemes is trivial, we may find $f_j \in A$ for $j=1,\ldots,m$ such that
 \[
  \Div f_j = D_j
 \]
 on $\Spec A$.
 Viewing~$f_j$ as elements of~$K(X)^{\times}$ we obtain horizontal Cartier divisors $D_j'=  D_j-div~f_j$ for $j=1,\ldots m$ generating $\Pic(X)$
  whose supports are disjoint from~$\mathscr{M}$.
 Denote by~$G_1,\ldots,G_s$ the prime divisors in the union of the supports of~$D_1',\ldots,D_m'$.
 Since all singular points of~$X$ are contained in~$\mathcal{M}$, these are in fact Cartier divisors.

 Denote by~$\mathscr{M}'$ the union of~$\mathscr{M}$ with the supports of~$G_1,\ldots,G_s$ (which is a finite set).
 By the same argument as above we find horizontal prime Cartier divisors $G_{s+1},\ldots,G_r$ generating $\Pic(X)$ whose supports are disjoint from~$\mathscr{M}'$.
 Hence, the support of~$G_i$ for~$i \leq s$ is disjoint from the support of~$G_j$ for~$j > s$.
\end{proof}

\begin{lemma} \label{blowupgenerators}
 Let~$X/B$ be an arithmetic surface and let~$G_1,\ldots,G_s$ be horizontal prime divisors generating the Picard group $\Pic(X)$.
 Let~$x$ be a closed point of~$X$ of codimension~$2$ such that~$X$ is regular at~$x$ and~$x$ is not contained in any~$G_j$ for~$j=1,\ldots,s$.
 Denote by~$X' \to X$ the blowup of~$X$ in~$x$.
 Let~$G$ be a horizontal prime Cartier divisor on~$X'$ disjoint from~$G_1,\ldots,G_s$ with nontrivial intersection with the exceptional locus~$E$.
 Then~$G_1,\ldots,G_s,G$ generate~$\Pic(X') \otimes \mQ$.
\end{lemma}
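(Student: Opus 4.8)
The plan is to first identify $\Pic(X')$ with $\Pic(X)$ plus one extra generator via the blowup $\pi\colon X'\to X$, and then to pin down the class of the exceptional divisor~$E$ inside the span of $G_1,\ldots,G_s,G$ by a one-line intersection computation.

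First I would recall the structure of the Picard group of a blowup of a regular point: since $X$ is normal and regular at the codimension-two point~$x$, and $X'$ is regular along~$E$, one has $\Pic(X')=\pi^*\Pic(X)\oplus\mZ\cdot[\mO_{X'}(E)]$ (compare \cite{Liu}, Chapter~9). The point is that $X'-E\cong X-\{x\}$ has the same Picard group as~$X$, because removing the regular codimension-two point~$x$ changes nothing, and a line bundle on~$X'$ that is trivial away from the irreducible divisor~$E$ is $\mO_{X'}(mE)$ for a unique $m\in\mZ$ (here one uses that $X'$ is regular along~$E$). Because $x\notin G_j$, the divisor $G_j$ is disjoint from~$x$, its total transform on~$X'$ is again $G_j$, and its class is $\pi^*[G_j]$; as $G_1,\ldots,G_s$ generate $\Pic(X)$, the classes $\pi^*[G_1],\ldots,\pi^*[G_s]$ generate $\pi^*\Pic(X)\otimes\mQ$. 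So it remains to show that $[\mO_{X'}(E)]$ lies in the $\mQ$-span of $\pi^*[G_1],\ldots,\pi^*[G_s],[G]$.

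For this I would write $[G]=\pi^*\lambda+c\,[E]$ in $\Pic(X')\otimes\mQ$, with $\lambda\in\Pic(X)\otimes\mQ$ and $c\in\mQ$, and compute~$c$ by intersecting with~$E$. The projection formula gives $\pi^*\lambda\cdot E=\lambda\cdot\pi_*E=0$ because $\pi_*E=0$, hence $G\cdot E=c\,(E\cdot E)$. Now $E$ is the exceptional curve of the blowup of a regular point, so $E\cdot E<0$ (\cite{Liu}, Chapter~9), while $G\cdot E>0$ since $G$ is an effective horizontal prime divisor, $E$ is a vertical prime divisor which is not a component of~$G$, and $G$ meets~$E$ by hypothesis. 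Therefore $c=(G\cdot E)/(E\cdot E)$ is a nonzero rational number, so $[E]=c^{-1}\big([G]-\pi^*\lambda\big)$ lies in the desired span, and $G_1,\ldots,G_s,G$ generate $\Pic(X')\otimes\mQ$.

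The only subtlety — and the thing to be careful about — is that $X$ is assumed regular merely at~$x$ (and $X'$ regular merely along~$E$), so one cannot quote statements about Picard groups of regular surfaces wholesale; but this causes no real trouble, because $\pi$ is an isomorphism outside a neighbourhood of~$x$, so every local argument above takes place exactly where the relevant scheme is regular. Otherwise the ingredients — the Picard group of a blowup and the signs $E\cdot E<0$, $G\cdot E>0$ — are standard arithmetic-surface intersection theory.
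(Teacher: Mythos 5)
Your proof is correct, and it takes a recognizably different route from the paper's at the decisive step. The paper pushes $G$ forward to its image curve $G_0=\pi(G)$ in $X$ (Cartier because $X$ is regular at $x$), writes $G_0=\sum_j n_jG_j$ in $\Pic(X)$, and then pulls back using the total-transform formula $\pi^*G_0=G+m_x(G_0)\cdot E$ (\cite{Liu}, Ch.~9, Prop.~2.23); the coefficient of $E$ is the multiplicity $m_x(G_0)$, which is positive simply because $x\in G_0$ (as $G$ meets $E$), so one solves for $E$ directly. You instead expand $[G]=\pi^*\lambda+c[E]$ abstractly and detect $c\neq 0$ by intersecting with $E$, using $\pi^*\lambda\cdot E=0$, $G\cdot E>0$ and $E^2<0$. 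Both arguments are sound (your degree computations on $E\cong\mP^1_{k(x)}$ make sense without any properness of $X'$ over $B$, since $E$ itself is a projective curve, and the cleanest justification of $\pi^*\lambda\cdot E=0$ is simply that $\pi^*\lambda$ restricts trivially to $E$ because $\pi(E)$ is a point). What the paper's route buys is an explicit value of the coefficient and no intersection theory at all — only the multiplicity formula for blowups; what your route buys is independence from the total-transform formula, at the cost of the signs $E^2<0$, $G\cdot E>0$. A further difference worth noting: the paper simply asserts that $\Pic(X')$ is generated by $G_1,\ldots,G_s$ and $E$, whereas you supply the justification via $X'-E\cong X-\{x\}$, the isomorphism $\Pic(X)\cong\Pic(X-\{x\})$ (using factoriality of $\mO_{X,x}$), and the fact that a line bundle trivial off the irreducible divisor $E$ is $\mO_{X'}(mE)$; this is a genuine (and welcome) addition, and your handling of the fact that $X$ is regular only at $x$ is exactly right.
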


\begin{proof}
 The Picard group of~$X'$ is generated by~$G_1,\ldots,G_s$ and~$E$.
 Let~$G_0$ denote the image of~$G$ in~$X$.
 Then~$G_0$ is a Cartier divisor as~$x$ is regular.
 Since~$G_1,\ldots,G_s$ generate the Picard group of~$X$, there are~$n_j \in \mZ$ such that
 $$
 G_0 = \sum_{j=1}^s n_j G_j
 $$
 in~$\Pic(X)$.
 By~\cite{Liu}, Chapter~9, Proposition~2.23 the pullback of~$G_0$ to~$X'$ is given by
 $$
 G + m_x(G_0) \cdot E.
 $$
 Since~$x \in G_0$, the multiplicity~$m_x(G_0)$ is positive.
 In~$\Pic(X') \otimes \mQ$ we thus have
 \begin{equation*}
 E = \frac{1}{m_x(G_0)}(\sum_{j=1}^s n_j G_j - G).
 \end{equation*}
\end{proof}

\begin{lemma} \label{horizontaldivisors}
 Let~$X$ be a projective arithmetic surface over a discrete valuation ring~$B$ and $D$ a tidy divisor on~$X$.
 Let~$x,z_1,\ldots,z_k \in X$ be closed points such that~$X$ and the reduced special fiber~$X_{s,\red}$ are regular at~$x$.
 Assume moreover that~$x$ is not a special point of~$D$.
 Then there is a horizontal Cartier prime divisor~$D_x$ passing through~$x$ and disjoint from $z_1,\ldots,z_k$ such that $D_x + D$ is tidy.
\end{lemma}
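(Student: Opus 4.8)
The plan is to obtain $D_x$ as the divisor of a sufficiently generic section of a high power of a very ample line bundle, exploiting that \emph{tidiness is a condition over the closed point of $B$ only}. Write $s$ for the closed point of $B$. Since the fibres of $X\to B$ are the generic fibre and $X_s$, no vertical prime divisor passes through a point of the generic fibre, so any effective Cartier divisor containing $D$ is automatically tidy outside $X_s$. Hence it is enough to produce a prime, horizontal Cartier divisor $D_x$ through $x$, disjoint from $z_1,\dots,z_k$, that avoids the finite set $\Sigma$ consisting of the singular points of $X$, the singular points of $X_{s,\red}$, the special points of $D$, and the finitely many points of $X_s$ lying on a horizontal component of $D$, and that is regular and meets $X_{s,\red}$ transversally at every point of $D_x\cap X_s$. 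Note that $x\notin\Sigma$; we may also assume that $x$ lies over $s$, the case $x\notin X_s$ being only easier since then no condition is imposed at $x$.

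Next I would fix a very ample line bundle $\mathcal L$ on $X$, which exists because $X$ is projective over the affine scheme $B$, and take $n\gg 0$. On the sections of $\mathcal L^{\otimes n}$ one imposes that $\sigma$ vanish at $x$ to order exactly one in a direction transversal to $X_{s,\red}$ (such directions exist since $\mathbb P(T_xX)$ has at least two rational points), that $\sigma$ not vanish at any point of $\Sigma\cup\{z_1,\dots,z_k\}$, and that $\sigma$ not vanish identically along any component of $X_s$ or of the generic fibre. For $n\gg 0$ these constraints are compatible, the vanishing condition at $x$ cutting out a nonempty linear system whose only base point is $x$. A Bertini-type argument then yields such a $\sigma$ for which in addition $D_x:=\Div(\sigma)$ is integral and is regular and transversal to $X_{s,\red}$ at each of the finitely many points of $D_x\cap X_s$: integrality and the absence of vertical components follow from Bertini over the generic point of $B$, whose residue field is infinite, while the conditions over $s$ follow from the usual incidence-variety count over the one-dimensional scheme $X_s$, using, when the residue field at $s$ is finite, a Bertini theorem valid over finite fields with prescribed behaviour at the finitely many points above.

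It remains to check that such a $D_x$ works. It is a Cartier divisor, being the divisor of a section of a line bundle; it is prime, it passes through $x$, and it misses $z_1,\dots,z_k$; and it is horizontal, because it meets $X_s$ transversally at $x$ and hence is not contained in $X_s$. To see that $D_x+D$ is tidy, fix a point $y\in X_s$ (tidiness outside $X_s$ was noted above). If $y\notin D_x$, then $D_x+D$ coincides with $D$ near $y$, which is tidy by hypothesis. If $y\in D_x\cap X_s$, then $y\notin\Sigma$, so $X$ is regular at $y$, a single component $C$ of $X_{s,\red}$ passes through $y$, $y$ is not a special point of $D$, and $y$ lies on no horizontal component of $D$; therefore near $y$ the divisor $D$ is supported on $C$, and $C$ is the only vertical prime divisor through $y$. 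Choosing local equations $f_1$ of $D_x$ and $f_2$ of $C$ at $y$, the pair $(f_1,f_2)$ is a regular system of parameters of $\mathcal O_{X,y}$ because $D_x$ is regular at $y$ and transversal to $C$; hence every sum $D_x+D+mC$ has local equation $f_1 f_2^a$ with $a\ge 1$ at $y$, which is a simple normal crossings divisor. Thus $D_x+D$ is tidy.

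The hard part is the Bertini step: one must simultaneously force $D_x$ to be integral, to pass through $x$ with the prescribed transversal tangent while missing a prescribed finite set, and to be regular and transversal to $X_{s,\red}$ at every point lying over $s$---all over a residue field that may be finite, which is exactly where a Bertini theorem valid over arbitrary fields is needed. Everything else reduces to the bookkeeping above.
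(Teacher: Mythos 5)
Your overall strategy---impose all the genuinely needed conditions along the special fibre, produce the divisor by a Bertini theorem with prescribed local behaviour (Poonen when the residue field is finite, classical Bertini otherwise), and verify tidiness by the local computation at the points of $D_x\cap X_s$---is essentially the strategy of the paper, which works with hyperplane sections in a projective embedding, reduces transversality over $B$ to fibrewise conditions via a lemma of Jannsen--Saito, and invokes Poonen's theorem over the special fibre; your tidiness bookkeeping is correct and if anything more explicit than the paper's. The genuine gap is in the Bertini step, at exactly the two places you delegate to it. First, integrality: you claim that integrality of $D_x=\Div(\sigma)$ ``follows from Bertini over the generic point of $B$, whose residue field is infinite.'' But the generic fibre $X_\eta$ is a \emph{curve}, so a member of your linear system meets it in a zero-dimensional scheme of degree $n\deg\mathcal{L}_\eta$; Bertini irreducibility theorems say nothing here, and irreducibility over $K$ of a general such section is a Hilbert-irreducibility type statement which fails for an arbitrary fraction field of a discrete valuation ring (e.g.\ $K=\mQ_p$ is not Hilbertian). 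So primality of $D_x$, which the lemma demands, is not established. Second, even granting both fibrewise statements, you never argue that a single global section $\sigma\in H^0(X,\mathcal{L}^{\otimes n})$ satisfies the conditions over $s$ and over $\eta$ simultaneously (one needs surjectivity of restriction to the special fibre for $n\gg 0$ and then that the coset of lifts meets the generic-fibre open locus); ``for $n\gg 0$ these constraints are compatible'' is asserted, not proved.

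Both defects are repairable, and the repair vindicates your own observation that everything happens over $s$: no condition over $\eta$ is needed at all. A section whose restriction to $X_{s,\red}$ has only simple zeros, all avoiding $\Sigma\cup\{z_1,\dots,z_k\}$ and one of them at $x$, already gives a Cartier divisor $D_x$ contained in the regular locus of $X$, regular and transversal to $X_{s,\red}$ wherever it meets $X_s$; replacing $D_x$ by its irreducible component through $x$ (still Cartier, since it avoids the singular points of $X$, and a subdivisor of the tidy divisor $D_x+D$ containing $D$ is again tidy) yields the required prime horizontal divisor, with no appeal to any statement over $K$. This is in effect how the paper proceeds: it imposes conditions only on the special fibre, lifts the hyperplane arbitrarily over $B$, and obtains the needed behaviour on the generic fibre by a specialization argument rather than by a second Bertini argument over $K$. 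Finally, your assertion that $x\notin\Sigma$ tacitly assumes that $x$ does not lie on a horizontal component of $D$ (otherwise ``vanish at $x$'' and ``avoid $\Sigma$'' are contradictory, and indeed no $D_x\neq$ that component could make $D_x+D$ tidy at $x$); this is how the lemma is used later in the paper, but it is not among the stated hypotheses, so you should flag it rather than pass over it silently.
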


\begin{proof}
 Denote by $s = \Spec k$ the special point of~$B$ and by $\eta = \Spec K$ the generic point.
 Choose an embedding $X \hookrightarrow \mP^N_B$.
 This induces embeddings $X_s \hookrightarrow \mP^N_k$ and $X_{\eta} \hookrightarrow \mP^N_K$.
 Let~$T$ be the finite subscheme of $\mP^N_k$ which is the disjoint union of all singular points of~$X$, all singular points of~$X_{s,\red}$ and all special points of~$D$ (they are all contained in the special fiber).
 In order to prove the lemma it suffices to find a hyperplane~$H$ of~$\mP_B^N$ intersecting~$X$ transversally, passing through~$x$, and disjoint from~$T$ such that $D_x := H \times_{\mP_B^N} X$ is regular and $D_x + D$ is tidy.
 By \cite{JS12}, Lemma~1.3 a hyperplane~$H$ satisfies these conditions if
 \begin{enumerate}[label=(\roman*)]
  \item $H_s$ intersects~$X_s$ transversally, passes through~$x$, and is disjoint from~$T$,
  \item $H_{\eta}$ intersects~$X_{\eta}$ transversally.
 \end{enumerate}
 
 Assume first that~$k$ is finite.
 By \cite{Poo04}, Thm.~1.2 there is a hypersurface~$H_s$ of~$\mP^N_k$ intersecting~$X_s$ transversally, passing through~$x$, and disjoint from~$T$.
 Changing the projective embedding we may assume that~$H_s$ is a hyperplane.
 If~$k$ is infinite, the existence of the hyperplane~$H_s$ follows by the classical Bertini theorem.
 
 Let~$H$ be any hyperplane of~$\mP_B^N$ with special fiber the hyperplane~$H_s$ constructed above.
 We claim that the generic fiber~$H_{\eta}$ intersects~$X_{\eta}$ transversally.
 Let $y \in X_{\eta}$ be a closed point in the intersection and choose a point~$y_s$ of the special fiber which is a specialization of~$y$.
 Then~$y_s$ is not contained in~$T$ as~$H_s$ is disjoint from~$T$.
 Hence, $H_s$ intersects~$X_s$ transversally at~$y_s$.
 Since~$y$ is a generalization of~$y_s$ this implies that~$H_{\eta}$ intersects~$X_{\eta}$ transversally at~$y$.
\end{proof}

\begin{proposition} \label{opensubscheme}
 Let~$Y/B$ be an arithmetic surface such that~$B$ is a discrete valuation ring with finitely generated quotient field and~$x \in Y$ a closed point in the special fiber.
 Then there is an open neighborhood~$V \subseteq Y$ of~$x$ and a compactification~$\bar{X}/B$ of~$V$ such that~$\bar{D}=\bar{X}-V$ is a tidy divisor and such that the following assertion holds:
 For every closed point~$y \in \bar{X}$ and every prime Cartier divisor~$Z$ of~$\bar{X}$ passing through~$y$ there is~$f \in K(\bar{X})^{\times}$ with support in~$Z \cup \bar{D}$ such that~$\deg_Z(f) > 0$ and~$\deg_W(f)=0$ for all other prime divisors~$W$ passing through~$y$.
\end{proposition}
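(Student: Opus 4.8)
The plan is to pass to a suitable projective model and to arrange that the boundary divisor carries \emph{two} separate systems of generators of the Picard group among its horizontal components; once that is done, the required rational function drops out of a short computation in $\Pic(\bar X)\otimes\mQ$.

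First I would replace $Y$ by an affine --- hence quasi-projective --- open neighborhood of $x$, choose a projective compactification over $B$, take its normalization, and resolve singularities away from a neighborhood of $x$ (Lipman, \cite{MR0491722}); applying Proposition\nobreakspace\ref{tidydesingularization} to this model and to the complement of that neighborhood --- whose blow-ups all take place over the complement, hence away from $x$ --- yields a projective model $\bar X/B$ which is regular along its boundary, an open $V\subseteq\bar X$ mapping isomorphically onto a neighborhood of $x$ in $Y$, and a tidy divisor $\bar D=\bar X-V$. Since $\bar X$ is normal, $\Pic(\bar X)$ is finitely generated and is generated by horizontal prime Cartier divisors (as in the proof of Lemma\nobreakspace\ref{picardgenerators}). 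The goal is then to enlarge $\bar D$, keeping $\bar X-\bar D$ a neighborhood of $x$ and $\bar D$ tidy, so that the horizontal irreducible components of $\bar D$ contain two subsets $\mathcal G_1$ and $\mathcal G_2$ \emph{each} generating $\Pic(\bar X)\otimes\mQ$.

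To build such a $\bar D$ I would start from the two generating systems of horizontal prime Cartier divisors furnished by Lemma\nobreakspace\ref{picardgenerators}, taking the finite exceptional set there to contain $x$, the singular locus of $\bar X$, and the special points of $\bar D$; these two systems have disjoint supports and avoid $x$, so adjoining them to the boundary does not disturb $V$. Because a tidy divisor cannot have two horizontal components meeting, one must then separate the members of each system by blowing up their intersection points; each such blow-up enlarges $\Pic$ by the class of an exceptional $\mP^1$, and the generating property is re-established after it by adjoining one further horizontal prime Cartier divisor through the new exceptional curve --- supplied by Lemma\nobreakspace\ref{horizontaldivisors} through a prescribed regular point of the new fibre, disjoint from the horizontal components already present and from $x$, and in such a way that the enlarged boundary stays tidy --- the effect on $\Pic$ being tracked by Lemma\nobreakspace\ref{blowupgenerators} together with the elementary description of $\Pic$ of a blow-up. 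Doing this separately over the two systems (their supports being disjoint, a blow-up needed for one leaves the other untouched) and iterating until no two horizontal components meet produces the desired pair $(\bar X,\bar D)$. I expect this bookkeeping to be the main obstacle: one must check that the separation process terminates, that tidiness and the property ``$\bar X-\bar D$ is a neighborhood of $x$'' survive every step, and that after all the blow-ups the two subsets of horizontal components still each generate $\Pic(\bar X)\otimes\mQ$.

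Granting such a $(\bar X,\bar D)$, let $y\in\bar X$ be a closed point and $Z$ a prime Cartier divisor through $y$. Because $\mathcal G_1\cup\mathcal G_2$ consists of pairwise disjoint horizontal components of the tidy divisor $\bar D$, at most one of them passes through $y$; hence there is $\epsilon\in\{1,2\}$ with no member of $\mathcal G_\epsilon$ through $y$ and with $Z\notin\mathcal G_\epsilon$. Writing $[Z]$ as a $\mQ$-linear combination of the classes $[G]$, $G\in\mathcal G_\epsilon$, and clearing denominators (and torsion in $\Pic$), we get an integer $N>0$, integers $n_G$, and $f\in K(\bar X)^\times$ with $\Div f=NZ-\sum_{G\in\mathcal G_\epsilon}n_G\,G$. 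Then the support of $f$ lies in $Z\cup\bigcup_{G\in\mathcal G_\epsilon}G\subseteq Z\cup\bar D$; since $Z\notin\mathcal G_\epsilon$ we have $\deg_Z(f)=N>0$; and any prime divisor $W\neq Z$ through $y$ with $\deg_W(f)\neq0$ would have to lie in $\mathcal G_\epsilon$, contradicting the choice of $\epsilon$. Thus $f$ has exactly the properties required by the proposition.
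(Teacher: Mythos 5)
Your endgame is sound and is essentially the paper's: once the tidy boundary contains two disjoint families of horizontal prime Cartier divisors, each generating $\Pic(\bar{X})\otimes\mQ$, the dichotomy at $y$ and the relation $NZ=\sum n_G G+\Div f$ produce the required function. The gap is in the construction of the two families. You claim that a blow-up performed to separate members of one system ``leaves the other untouched'' and accordingly restore generation by adjoining a \emph{single} new horizontal divisor through each new exceptional curve. But every blow-up enlarges $\Pic(\bar{X})\otimes\mQ$ by the class of the exceptional $\mP^1$ (the decomposition $\Pic(X')=\pi^*\Pic(X)\oplus\mZ\cdot E$ underlying Lemma~\ref{blowupgenerators}), so the generating property is destroyed for \emph{both} systems at once: the members of the geometrically untouched system pull back to classes lying in $\pi^*\Pic(X)\otimes\mQ$, which does not contain $E$. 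Hence each exceptional component must receive \emph{two} new horizontal divisors, one for each system, through two distinct regular points of it; with only one new divisor either one system stops generating, or, if the two systems share it, they are no longer disjoint and your choice of $\epsilon$ fails at points $y$ lying on the shared component. This ``two divisors per exceptional curve'' device is exactly the paper's $D_i,K_i$ construction via Lemma~\ref{horizontaldivisors}, and it is what makes the counting come out.

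A second, smaller, issue: the blow-ups you need are not only at intersection points of two horizontal generators. Tidiness of the enlarged boundary also fails where an adjoined generator is singular, meets a vertical divisor non-transversally or at a singular point of the reduced special fibre, or meets the old boundary badly; all such special points must be blown up as well. The paper sidesteps the bookkeeping you (rightly) flag as the main obstacle by reversing the order of operations: it first adjoins the two generating systems of Lemma~\ref{picardgenerators} to the boundary (shrinking the neighborhood, which is harmless since the generators avoid $x$), then applies Proposition~\ref{tidydesingularization} \emph{once}, obtaining a model regular along a tidy boundary whose exceptional components are $\mP^1$'s, and only then attaches the pairs $D_i,K_i$ through each exceptional component, with Lemma~\ref{horizontaldivisors} guaranteeing that tidiness is preserved at every step; the final function is then produced from the mixed family (one system of $G$'s together with one of $\{D_i\}$ or $\{K_i\}$) avoiding $y$. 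With these modifications your argument becomes the paper's proof; as written, the claim that the two families you construct each generate $\Pic(\bar{X})\otimes\mQ$ does not hold.
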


\begin{proof}
 Take an affine open neighborhood~$V'$ of~$x$ such that the complement contains all singular points except~$x$ and all vertical prime divisors not passing through~$x$.
 Since~$V'$ is affine, we can choose a projective compactification~$\bar{V}'$ of~$V'$ over~$B$.
 Set~$D'=\bar{V}'-V'$ with the reduced scheme structure.
 By~\cite{MR0491722} we can replace~$(\bar{V},D')$ by a desingularization (in the strong sense) and thus assume that~$x$ is the only possible singular point of~$\bar{V}'$ and~$D'$ is a Cartier divisor.
 Choose prime divisors~$G_1,\ldots,G_r$ of~$\bar{V}'$ not passing through~$x$ as in Lemma\nobreakspace \ref {picardgenerators}.
 Making~$V'$ smaller we may assume that~$G_1,\ldots,G_r$ are contained in~$D'$.

 Let~$(\bar{X},D_0) \to (\bar{V}',D')$ be a tidy desingularization, which exists by Proposition\nobreakspace \ref {tidydesingularization}.
 Since~$\bar{V}'$ is regular at every point in~$D'$, the morphism~$\bar{X} \to \bar{V}'$ is a consecutive blowup in closed points of~$D'$.
 Moreover, the exceptional fiber of each blowup in a closed point~$z$ is isomorphic to~$\mP^1_{k(z)}$ (see~\cite{Liu}, Chapter~8, Theorem~1.19).
 Denote by~$E_1,\ldots,E_n$ the irreducible components of the exceptional divisor of~$\bar{X} \to \bar{V}'$.
 For each~$i=1,\ldots,n$ choose two different closed points~$y_i,z_i \in E_i$ in the regular locus of~$D_0$.
 By Lemma\nobreakspace \ref {horizontaldivisors} there is a (horizontal) Cartier prime divisor~$D_1$ intersecting~$E_1$ transversally at~$y_1$ and disjoint from $y_2,\ldots,y_n,z_1,\ldots,z_n$ such that $D_0 + D_1$ is tidy.
 By the same argument there is a prime divisor~$D_2$ intersecting~$E_2$ transversally at~$y_2$ and disjoint from $y_3,\ldots,y_n,z_1,\ldots,z_n$ such that $D_0 + D_1 + D_2$ is tidy.
 Continuing this way we obtain for $i = 1,\ldots,n$ horizontal prime divisors $D_i$ and~$K_i$ intersecting~$E_i$ transversally at~$y_i$ and~$z_i$, respectively, and such that
 \[
  \bar{D} := D_0 + D_1 + \ldots + D_n + K_1 + \ldots + K_n
 \]
 is tidy.
 We set~$V = \bar{X} - \bar{D}$.

 We claim that~$(\bar{X},\bar{D})$ has the required properties.
 Let~$y \in \bar{X}$ be a closed point and~$Z$ a Cartier prime divisor of~$\bar{X}$ passing through~$y$.
 Either $G_1,\ldots,G_s$ or $G_{s+1},\ldots,G_r$ do not pass through~$y$, say $G_1,\ldots,G_s$.
 Similarly, either $D_1,\ldots,D_n$ or $K_1,\ldots,K_n$ do not pass through~$y$, say $D_1,\ldots,D_n$.
 By Lemma\nobreakspace \ref {blowupgenerators} the prime divisors $G_1,\ldots,G_s$, $D_1,\ldots,D_n$ generate $\Pic(\bar{X}) \otimes \mQ$.
 Hence, there are integers $m$, $m_1,\ldots,m_n$, $n_1,\ldots,n_s$ with~$m > 0$ and~$f \in K(\bar{X})^{\times}$ such that
 $$
 mZ = \sum_{j=1}^n m_j D_j + \sum_{j=1}^s n_j G_j + div~f.
 $$
 The prime divisors~$D_1,\ldots,D_n$ and~$G_1,\ldots,G_s$ do not pass through~$y$.
 Therefore,~$\deg_W(f) = 0$ for all prime divisors~$W$ different from~$Z$ passing through~$y$ and~$\deg_Z(f) = m > 0$.
 Furthermore,~$D_1,\ldots,D_n,G_1,\ldots,G_s$ are contained in~$\bar{D}$ and thus~$f$ has support in~$Z \cup \bar{D}$.
\end{proof}

As a direct consequence of Proposition\nobreakspace \ref {opensubscheme} we obtain:

\begin{corollary} \label{openwithenoughtamecoverings}
 In the situation of Proposition\nobreakspace \ref {opensubscheme} let~$U \subseteq V$ be a neighborhood of~$x$ such that~$D'=\bar{X}-U$ is the support of a tidy divisor.
 Then~$(\bar{X},D')$ has enough tame coverings.
\end{corollary}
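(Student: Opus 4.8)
The plan is to read off Definition~\ref{enoughtamecoverings} for $(\bar X,D')$ directly from Proposition~\ref{opensubscheme}. Since $U\subseteq V$ we have $\bar D=\bar X-V\subseteq\bar X-U=D'$, so any $f\in K(\bar X)^{\times}$ whose support lies in $Z\cup\bar D$ for a subdivisor $Z$ of $D'$ automatically has support in $D'$. Hence it suffices to show that for every closed point $y\in D'$ and every irreducible component $C$ of $D'$ passing through $y$, Proposition~\ref{opensubscheme} can be applied with that $y$ and with the prime divisor taken to be $C$: it then produces $f\in K(\bar X)^{\times}$ with support in $C\cup\bar D\subseteq D'$, with $\deg_C(f)>0$ and $\deg_W(f)=0$ for every other prime divisor $W$ through $y$, which is exactly the condition required at $y$ in Definition~\ref{enoughtamecoverings}.

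The one point to check before invoking Proposition~\ref{opensubscheme} is that the irreducible components of $D'$ are prime \emph{Cartier} divisors on $\bar X$. For this I would argue that $\bar X$ is regular at every closed point $y$ of $D'$: such a $y$ lies in the special fibre of $\bar X/B$, hence on the vertical divisor $\bar X_s$, and since $D'$ is the support of a tidy divisor, $D'+\bar X_s$ has simple normal crossings at $y$, which forces $\bar X$ to be regular at $y$. Consequently an irreducible component $C$ of $D'$ has all of its closed points in the regular locus of $\bar X$ (they all lie in $D'$), so at every closed point of $\bar X$ the ideal sheaf of $C$ is principal — trivially away from $C$, and because a regular local ring is a unique factorization domain at points of $C$ — hence invertible in a neighbourhood of each closed point; since these neighbourhoods cover the Noetherian scheme $\bar X$, the divisor $C$ is Cartier.

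Putting the two steps together yields the corollary with no further work: fix a closed point $y\in D'$ and an irreducible component $C$ of $D'$ through $y$, apply Proposition~\ref{opensubscheme} to the prime Cartier divisor $C$ and the point $y$, and observe that the resulting function has support in $C\cup\bar D\subseteq D'$. As $y$ and $C$ were arbitrary, $(\bar X,D')$ has enough tame coverings. I do not expect a genuine obstacle here — this really is a direct consequence of Proposition~\ref{opensubscheme} — and the only mildly technical remark, that the components of the tidy divisor $D'$ are Cartier on $\bar X$, is itself a formal consequence of the tidiness hypothesis.
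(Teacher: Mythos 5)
Your proof is correct and matches the paper's treatment: the paper states this corollary as a direct consequence of Proposition~\ref{opensubscheme} with no separate argument, and your application of that proposition to an irreducible component $C$ of $D'$ through a given closed point, together with the inclusion $\bar D\subseteq D'$ (so the resulting $f$ has support in $D'$), is exactly that deduction. Your additional verification that the components of $D'$ are prime Cartier divisors, via regularity of $\bar X$ along $D'$ forced by tidiness, correctly fills in the one implicit hypothesis of the proposition.
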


\section{The main result} \label{mainresults}

We are now in the position to construct neighborhoods on an an arithmetic surface~$Y/B$
 satisfying all assumptions made in Proposition\nobreakspace \ref {etalecovering} and Proposition\nobreakspace \ref {lerayfiltration}.
Note that the assumption on the fundamental group of~$B$ is automatic in the local case.

\begin{theorem} \label{goodopenneighborhood}
 Let~$B$ be the spectrum of a henselian discrete valuation ring~$R$ which is formally smooth over a discrete valuation ring with finitely generated quotient field.
 Let~$\mc$ be a full class of finite groups such that the residue characteristic of~$R$ is not contained in~$\mN(\mc)$.
 Assume that $\mu_{\ell} \subseteq R$ for all primes $\ell \in \mN(\mc)$ and that the absolute Galois group of the residue field of~$R$ is $\mc$-good.
 Let~$\pi : Y \to B$ be an arithmetic surface and~$x \in Y$ a point.
 Then there is an open neighborhood~$U$ of~$x$ and a compactification~$U \subseteq \bar{X}$ of~$U \to B$
  such that the complement~$\bar{D}$ of~$U$ in~$\bar{X}$ is a tidy divisor with the following properties.
 \begin{enumerate}[label=(\roman*)]
  \item The horizontal part of~$\bar{D}$ has nontrivial intersection with all vertical prime divisors on~$\bar{X}$.
  \item $(\bar{X},\bar{D})$ has enough tame coverings.
 \end{enumerate}
 As a consequence~$U$ is $K(\pi,1)$ with respect to~$\mc$.
 \end{theorem}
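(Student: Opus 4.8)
The plan is to deduce the statement from Proposition~\ref{firstreductions} by exhibiting a compactification $\bar{X}/B$ of a sufficiently small neighbourhood $U$ of $x$ whose boundary $\bar{D}=\bar{X}-U$ is a tidy divisor meeting all the hypotheses of Proposition~\ref{etalecovering} and Proposition~\ref{lerayfiltration}; those two propositions then supply precisely conditions~(1) and~(2) of Proposition~\ref{firstreductions}. Two preliminary reductions are in order. First, since $Y_s$ is closed in $Y$ and every open neighbourhood of a specialisation of $x$ contains $x$, we may replace $x$ by a closed point lying in the special fibre. Second, the geometric input below -- in particular the finite generation of $\Pic$ in Lemma~\ref{picardgenerators}, which rests on N\'eron's generalisation of the Mordell--Weil theorem over finitely generated fields -- is only available over a base with finitely generated fraction field; a standard limit argument, which is where the hypothesis that $R$ is formally smooth over a discrete valuation ring with finitely generated fraction field enters, lets us descend $Y$ and $x$ to an arithmetic surface over such a base, perform the construction there, and base change back up to $R$ (geometric regularity of the base change preserves tidiness, ``enough tame coverings'', and the intersection properties below). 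I therefore assume from now on that the fraction field of $R$ is finitely generated and $x$ is a closed point of $Y_s$.

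\textbf{Construction of $(\bar{X},\bar{D})$.} Apply Proposition~\ref{opensubscheme} to $(Y,x)$: it produces an open neighbourhood $V\subseteq Y$ of $x$ and a compactification $\bar{X}/B$ of $V$ such that $\bar{D}_0:=\bar{X}-V$ is tidy and such that for every closed point $y\in\bar{X}$ and every prime Cartier divisor $Z$ through $y$ there is $f\in K(\bar{X})^{\times}$ supported on $Z\cup\bar{D}_0$ with $\deg_Z f>0$ and $\deg_W f=0$ for all other prime divisors $W$ through $y$. By the construction in that proof, the vertical prime divisors of $\bar{X}$ not contained in $\bar{D}_0$ are exactly the (finitely many, strict transforms of) components of the special fibre through $x$; these are the components of $W$ in the notation of Section~\ref{setupandnotation}. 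Now enlarge the boundary: applying Lemma~\ref{horizontaldivisors} finitely many times -- each time choosing the point through which the new horizontal prime divisor passes in the regular locus of $\bar{X}$ and of $\bar{X}_{s,\mathit{red}}$, and away from $x$ and from all the finitely many points already used -- I obtain a tidy divisor $\bar{D}\supseteq\bar{D}_0$ with $x\notin\bar{D}$, whose horizontal part meets every vertical prime divisor of $\bar{X}$ (this is property~(i) of the theorem), and which has, for each component of $W$, a horizontal prime component of $\bar{D}$ meeting it and no other component of $\bar{D}$. Put $U:=\bar{X}-\bar{D}$; then $U\subseteq V$ and $\bar{X}-U=\bar{D}$ is tidy, so Corollary~\ref{openwithenoughtamecoverings} gives property~(ii): $(\bar{X},\bar{D})$ has enough tame coverings. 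Note also that $\bar{D}_h$ is nonempty and meets every component of $W$, by the previous sentence.

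\textbf{Verification and conclusion.} It remains to check the remaining hypotheses of Propositions~\ref{etalecovering} and~\ref{lerayfiltration}. The standing assumptions of Section~\ref{setupandnotation} hold: the primes of $\mN(\mc)$ are invertible on $B$ and $\mu_\ell\cong\mZ/\ell\mZ$ on $B$ because the residue characteristic is not in $\mN(\mc)$ and $\mu_\ell\subseteq R$, and the absolute Galois group of the residue field is (strongly) $\mc$-good by hypothesis. In the local case the natural map $\pi_1(b,\bar{b})(\mc)\to\pi_1(B,\bar{b})(\mc)$ is an isomorphism because $B$ is henselian, so hypothesis~(iii) of Proposition~\ref{etalecovering} is automatic; hypothesis~(ii) there -- that every connected component of $D$ has a horizontal component -- follows from property~(i) together with tidiness, since a vertical component of $\bar{D}$ can be met only by a \emph{non-isolated} horizontal component of $\bar{D}$ (two horizontal components of a tidy divisor are disjoint), while a horizontal component of $D$ is itself non-isolated and hence meets a vertical one. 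Thus Proposition~\ref{etalecovering} gives condition~(1) of Proposition~\ref{firstreductions}, and, since $\bar{D}_h$ is nonempty and meets every component of $W$ and $(\bar{X},\bar{D})$ has enough tame coverings, Proposition~\ref{lerayfiltration} gives condition~(2). Proposition~\ref{firstreductions} now shows that $U$ is $K(\pi,1)$ with respect to $\mc$; transporting $U$ back along the base change of the first paragraph finishes the proof.

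\textbf{Main obstacle.} I expect the crux to be the enlargement step: one must add horizontal divisors that \emph{simultaneously} realise property~(i), attach an isolated horizontal component to every component of $W$, keep the boundary tidy, and avoid $x$ -- all of which leans on the Picard-group bookkeeping assembled for Proposition~\ref{opensubscheme} (Lemmas~\ref{picardgenerators}, \ref{blowupgenerators}, \ref{horizontaldivisors}) and on the fact, packaged in Corollary~\ref{openwithenoughtamecoverings}, that ``enough tame coverings'' is inherited when the neighbourhood is shrunk. The limit and specialisation reductions in the first paragraph are routine but must be spelled out, precisely because the core construction genuinely requires a finitely generated fraction field and a closed point of the special fibre.
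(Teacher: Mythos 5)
Your proposal takes essentially the same route as the paper's own proof: the same two reductions (to a closed point of the special fibre, and—via the formal smoothness hypothesis—to a base with finitely generated fraction field, constructing there and base changing back), the same construction of $(\bar{X},\bar{D})$ from Proposition~\ref{opensubscheme} followed by adding horizontal divisors through a point on each vertical component using Lemma~\ref{horizontaldivisors} and Corollary~\ref{openwithenoughtamecoverings}, and the same final assembly of Propositions~\ref{firstreductions}, \ref{etalecovering} and~\ref{lerayfiltration}. The one place you claim more than the paper does—that each added horizontal component can be arranged to meet \emph{no other} component of $\bar{D}$ merely by avoiding finitely many points—is not literally what Lemma~\ref{horizontaldivisors} provides (its divisors are hyperplane sections, which cannot avoid the projective vertical part of the boundary), but this is exactly the hypothesis on $\bar{D}_h$ in Proposition~\ref{lerayfiltration} that the paper itself invokes at this step without further comment, so your write-up is, if anything, more explicit about what needs to be checked.
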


\begin{proof}
 Without loss of generality we may assume that~$x$ is a closed point lying over the closed point~$b$ of~$B$.
 The arithmetic surface $Y/B$ is of finite presentation.
 Hence, it is the base change to~$B$ of an arithmetic surface~$Y_0/B_0$ such that~$B_0$ is a discrete valuation ring with finitely generated quotient field
  and~$B$ is formally smooth over~$B_0$.
 Formally smooth base change does not affect the tidiness of a divisor, nor does it disturb properties~(i) and~(ii).
 Therefore, it suffices to construct~$U$ with properties (i) and (ii) for~$B$ local with finitely generated quotient field.

 Choose an open neighborhood~$V$ of~$x$ and a compactification~$\bar{X}/B$ as in Proposition\nobreakspace \ref {opensubscheme}.
 Denote by~$D'$ the complement of~$V$ (with the reduced scheme structure).
 On every irreducible component~$C$ of~$\bar{X}_b$ take a closed point~$c_C \neq x$ in the smooth locus of~$C$ and not contained in any other irreducible component of~$\bar{X}_b$.
 Using Lemma\nobreakspace \ref {horizontaldivisors} we construct a horizontal divisor~$D''$ passing through~$c_C$ for every vertical prime divisor~$C$ such that $\bar{D} := D' + D''$ is tidy.
 Then~$(\bar{X},\bar{D})$ has enough tame coverings by Corollary\nobreakspace \ref {openwithenoughtamecoverings}.
 Moreover,~$(\bar{X},\bar{D})$ has properties~(i) and~(ii).
 
 Setting $U = \bar{X}-\bar{D}$ we conclude that~$U$ is $K(\pi,1)$ with respect to~$\mc$ by combining Proposition\nobreakspace \ref {firstreductions}, Proposition\nobreakspace \ref {etalecovering}, and Proposition\nobreakspace \ref {lerayfiltration}.
 Note that the assumptions on the roots of unity and the residue field of~$R$ are part of the general setup described in Section\nobreakspace \ref {setupandnotation}.
 They are thus implicit in Proposition\nobreakspace \ref {etalecovering} and Proposition\nobreakspace \ref {lerayfiltration}.
\end{proof}

Notice that the absolute Galois group of an algebraic extension of a finite field is $\mc$-good for any class of finite groups~$\mc$.
Moreover, completion in formally smooth.
Therefore, Theorem\nobreakspace \ref {goodopenneighborhood} implies Theorem\nobreakspace \ref {theoremintro} as stated in the introduction.
The following corollaries give more explicit examples of situations where Theorem\nobreakspace \ref {goodopenneighborhood} applies.
Remember that for given primes $\ell_1,\ldots,\ell_n$,
 we denoted by $\mc(\ell_1,\ldots,\ell_n)$ the full class of finite groups whose orders are contained in the submonoid of~$\mN$ generated by $\ell_1,\ldots,\ell_n$.

\begin{corollary}
 Let~$Y$ be an arithmetic surface over the spectrum~$B$ of a discrete valuation ring and~$y \in Y$ a point.
 Let~$\mc$ be a full class of finite groups such that the residue characteristic of~$B$ is not contained in~$\mN(\mc)$.
 Then there is a basis of Zariski neighborhoods of~$y$ which are $K(\pi,1)$ with respect to~$\mc$ in the following cases:
 \begin{enumerate}[label=(\roman*)]
  \item $B$ is the spectrum of the ring of integers of a finite extension~$K$ of $\mQ_p$
        and~$\mc$ is of the form $\mc(\ell_1,\ldots,\ell_n)$ for primes $\ell_1,\ldots,\ell_n \ne p$ such that $\mu_{\ell_i} \subseteq K$.
  \item $B$ is the spectrum of the ring of integers of the completion of the maximal unramified extension of a finite extension of~$\mQ_p$.
 \end{enumerate}
\end{corollary}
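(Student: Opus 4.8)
The plan is to deduce both cases from Theorem~\ref{goodopenneighborhood} by applying it not just to $Y$ but to every Zariski‑open neighbourhood of $y$. First I would record that this shrinking is harmless: for any open $V\subseteq Y$ containing $y$, the scheme $V$ is again an arithmetic surface over $B$. Indeed $V$ is normal and irreducible, it has dimension $2$ (being a nonempty open of the irreducible $Y$ it contains the generic point $\eta$), it is flat and of finite type over $B$, and $V_\eta$ is a nonempty open subscheme of $Y_\eta$; now $Y_\eta$ is normal (an open subscheme of the normal scheme $Y$) and lives over a field of characteristic zero, hence is geometrically normal, and being geometrically connected by hypothesis it is geometrically irreducible, so $V_\eta$ is geometrically connected. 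Granting that Theorem~\ref{goodopenneighborhood} applies to the pair $(V,y)$ for every such $V$, the open $U\subseteq V$ it produces is $K(\pi,1)$ with respect to $\mc$, and letting $V$ run through all neighbourhoods of $y$ yields the asserted basis.

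It then remains to verify the hypotheses of Theorem~\ref{goodopenneighborhood} in each case. In case~(i) I would take $R=\mathcal{O}_K$, which is a complete, hence henselian, discrete valuation ring. To see that $\mathcal{O}_K$ is formally smooth over a discrete valuation ring with finitely generated quotient field, I would realise $K$ as the completion $\widehat{L_{\mathfrak p}}$ of a number field $L$ at a prime $\mathfrak p\mid p$ (possible for any finite extension of $\mQ_p$ by Krasner's lemma), so that $\mathcal{O}_K=\widehat{(\mathcal{O}_L)_{\mathfrak p}}$ is the completion of the discrete valuation ring $(\mathcal{O}_L)_{\mathfrak p}$, whose quotient field $L$ is finitely generated over $\mQ$; completion is formally smooth. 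The residue characteristic $p$ lies outside $\mN(\mc)$ because every prime dividing an element of $\mN(\mc(\ell_1,\ldots,\ell_n))$ is one of $\ell_1,\ldots,\ell_n$, hence $\neq p$; for the same reason every prime $\ell\in\mN(\mc)$ equals some $\ell_i$, and then $\mu_\ell\subseteq K$ by hypothesis, so $\mu_\ell\subseteq\mathcal{O}_K$. Finally the residue field of $\mathcal{O}_K$ is finite, so its absolute Galois group $\hat{\mZ}$ is $\mc$‑good (in fact strongly $\mc$‑good, cf.\ Section~\ref{setupandnotation}). Thus Theorem~\ref{goodopenneighborhood} applies.

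In case~(ii) let $K_0$ be a finite extension of $\mQ_p$ whose maximal unramified extension $K_0^{\mathrm{ur}}$ has completion equal to the fraction field of the given ring $\mathcal{O}$, and take $R=\mathcal{O}$, again a complete, hence henselian, discrete valuation ring. For formal smoothness I would once more choose a number field $L$ with $\widehat{(\mathcal{O}_L)_{\mathfrak p}}=\mathcal{O}_{K_0}$ and note that $\mathcal{O}_{K_0^{\mathrm{ur}}}$ is the filtered colimit of the rings of integers $\mathcal{O}_{K_n}$ of the finite unramified subextensions $K_n/K_0$, each of which is finite étale over $\mathcal{O}_{K_0}$ since the residue extensions of finite fields are separable; hence $\mathcal{O}_{K_0^{\mathrm{ur}}}$ is formally étale over $\mathcal{O}_{K_0}$, and $\mathcal{O}$ is its completion, so composing the formally smooth maps $(\mathcal{O}_L)_{\mathfrak p}\to\mathcal{O}_{K_0}\to\mathcal{O}_{K_0^{\mathrm{ur}}}\to\mathcal{O}$ exhibits $\mathcal{O}$ as formally smooth over the discrete valuation ring $(\mathcal{O}_L)_{\mathfrak p}$ with finitely generated quotient field. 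Here $p\notin\mN(\mc)$ is the standing hypothesis of the corollary, the residue field of $\mathcal{O}$ is $\overline{\mathbb{F}}_p$, whose absolute Galois group is trivial and therefore $\mc$‑good, and $\mu_\ell\cong\mZ/\ell\mZ$ on $\mathcal{O}$ for every $\ell\neq p$ by Hensel's lemma, so in particular $\mu_\ell\subseteq\mathcal{O}$ for all primes $\ell\in\mN(\mc)$. Again Theorem~\ref{goodopenneighborhood} applies to each $(V,y)$.

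The statement is essentially pure bookkeeping, so I do not expect a genuine obstacle. The only two steps that require any care are the presentation of $\mathcal{O}_K$ (resp.\ of $\mathcal{O}$) as formally smooth over a discrete valuation ring with finitely generated fraction field — which uses the Krasner approximation, together with the fact that unramified extensions are ind‑étale — and the passage from a single good neighbourhood to a basis of them, which rests on the stability of the notion of arithmetic surface under passage to Zariski‑open subschemes containing the chosen point.
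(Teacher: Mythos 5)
Your proposal is correct and follows essentially the same route as the paper, which treats these corollaries as direct instances of Theorem~\ref{goodopenneighborhood} (using that completion is formally smooth, that unramified extensions are ind-\'etale, and that the relevant residue fields have $\mc$-good absolute Galois groups); your added bookkeeping — realizing $\mathcal{O}_K$ via a dense number field and noting that open subschemes containing $y$ are again arithmetic surfaces, so the theorem can be applied inside every Zariski neighborhood to produce a basis — is exactly the verification the paper leaves implicit.
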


\begin{corollary}
 Let~$Y$ be an arithmetic surface over the spectrum~$B$ of a discrete valuation ring and~$\bar{y} \to Y$ a geometric point.
 Let~$\mc$ be a full class of finite groups such that the residue characteristic of~$B$ is not contained in~$\mN(\mc)$.
 Then there is a basis of \'etale neighborhoods of~$\bar{y}$ which are $K(\pi,1)$ with respect to~$\mc$ in the following cases:
 \begin{enumerate}[label=(\roman*)]
  \item $B$ is the spectrum of the ring of integers of a finite extension of $\mQ_p$ and~$\mc$ is of the form $\mc(\ell_1,\ldots,\ell_n)$ for primes $\ell_1,\ldots,\ell_n \ne p$.
  \item $B$ is the spectrum of the ring of integers of the completion of the maximal unramified extension of a finite extension of~$\mQ_p$.
 \end{enumerate}
\end{corollary}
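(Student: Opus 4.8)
The plan is to obtain both cases directly from Theorem~\ref{theoremintro}, which already yields a \emph{basis} of \'etale neighborhoods of~$\bar y$ that are $K(\pi,1)$ with respect to~$\mc$; all that remains is to check, in each of the two situations, that $B$ and $\mc$ satisfy the hypotheses of that theorem.

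First I would treat case~(i). Since $K$ is a finite extension of $\mQ_p$ it is complete, hence equal to its own completion, and it is algebraic over $\mQ_p$ with finite ramification index; thus $B$ is of the form required by Theorem~\ref{theoremintro}. The monoid $\mN(\mc(\ell_1,\ldots,\ell_n))$ consists precisely of the positive integers all of whose prime divisors lie in $\{\ell_1,\ldots,\ell_n\}$, so the condition that the residue characteristic $p$ of $B$ not lie in $\mN(\mc)$ amounts to $p\notin\{\ell_1,\ldots,\ell_n\}$, which holds by hypothesis. Moreover the set of primes occurring in $\mN(\mc)$ is the \emph{finite} set $\{\ell_1,\ldots,\ell_n\}$, so the condition ``$K(\mu_\ell)\mid K$ is a $\mc$-extension for all but finitely many primes $\ell\in\mN(\mc)$'' is vacuously satisfied. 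Hence Theorem~\ref{theoremintro} applies and delivers the asserted basis.

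Next I would treat case~(ii). Write $K=\widehat{L^{\mathrm{nr}}}$ with $L\mid\mQ_p$ finite. Then $L^{\mathrm{nr}}$ is algebraic over $\mQ_p$ and its ramification index over $\mQ_p$ equals that of $L\mid\mQ_p$, hence is finite; as $K$ is its completion, $B$ again has the form required by Theorem~\ref{theoremintro}. The residue characteristic of $B$ is $p$, which is not in $\mN(\mc)$ by hypothesis, so every prime $\ell\in\mN(\mc)$ differs from $p$. The residue field of $K$ is $\overline{\mathbb{F}_p}$, which contains the $\ell$-th roots of unity for every prime $\ell\neq p$; since $K$ is complete, Hensel's lemma lifts them, so $\mu_\ell\subseteq K$ and therefore $K(\mu_\ell)=K$ is (trivially) a $\mc$-extension, for \emph{every} prime $\ell\in\mN(\mc)$. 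Thus the hypotheses of Theorem~\ref{theoremintro} are met and the conclusion follows.

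This verification is essentially routine; the one substantive point is the remark in case~(ii) that a complete discretely valued field of mixed characteristic with algebraically closed residue field contains all prime-to-$p$ roots of unity, which is exactly what makes the root-of-unity hypothesis of Theorem~\ref{theoremintro} automatic for an arbitrary class $\mc$ with $p\notin\mN(\mc)$. One could instead derive both cases from Theorem~\ref{goodopenneighborhood} itself, by first adjoining the required roots of unity to the base --- an unramified, hence \'etale, operation because the relevant primes are prime to $p$ --- and then recognizing an arbitrary \'etale neighborhood of $\bar y$, made affine and connected, as an arithmetic surface over a henselian discrete valuation ring of the same type (via its Stein factorization over $B$); but routing through Theorem~\ref{theoremintro} is considerably shorter, so that is the approach I would take.
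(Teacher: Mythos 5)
Your proposal is correct and matches the paper's (implicit) proof: the paper offers no separate argument for this corollary, presenting it simply as an explicit instance of Theorem~\ref{theoremintro} (equivalently of Theorem~\ref{goodopenneighborhood}), which is exactly what you do by checking its hypotheses. Your two verifications --- that the roots-of-unity condition is vacuous in case~(i) because $\mN(\mc(\ell_1,\ldots,\ell_n))$ contains only finitely many primes, and that in case~(ii) the algebraically closed residue field together with Hensel's lemma gives $\mu_\ell\subseteq K$ for every prime $\ell\neq p$ --- are precisely the points that make the corollary immediate.
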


\bibliographystyle{alpha}
\bibliography{citations}

\end{document}